\newtheorem{theorem}{Theorem}
\newtheorem{prop}{Proposition}
\newtheorem{remark}{Remark}
\newtheorem{Assumption}{H.\!\!}
\newenvironment{proof}{\begin{trivlist}
   \item[\hskip\labelsep{\it Proof.}]}{$\hfill\Box$\end{trivlist}}
\newcommand{\notiz}[1]{}
\def\R{\mathbb R} 
\title{An adaptive Euler-Maruyama scheme for McKean-Vlasov SDEs with super-linear growth and application to the mean-field FitzHugh-Nagumo model}   
\author{Christoph Reisinger\footnote{Mathematical Institute, University of Oxford, Andrew Wiles Building, Woodstock Road, Oxford, OX2 6GG, UK, E-mail: christoph.reisinger@maths.ox.ac.uk} , Wolfgang Stockinger\footnote{Mathematical Institute, University of Oxford, Andrew Wiles Building, Woodstock Road, Oxford, OX2 6GG, UK, E-mail: wolfgang.stockinger@maths.ox.ac.uk. The author is supported by an Upper Austrian Government fund.}}
\date{}
\begin{document}
\maketitle 
\begin{abstract}
In this paper, we introduce adaptive Euler-Maruyama schemes for McKean-Vlasov stochastic differential equations (SDEs) assuming only a standard monotonicity condition on the drift and diffusion coefficients but no global Lipschitz continuity in the state variable for either, while global Lipschitz continuity is required for the measure component only. We prove moment stability of the discretised processes and a strong convergence rate of $1/2$.
Several numerical examples, centred around a mean-field model for FitzHugh-Nagumo neurons,
illustrate that the standard uniform scheme fails and that the adaptive approach shows in most cases superior performance to tamed approximation schemes. In addition, we introduce and analyse an adaptive Milstein scheme for a certain sub-class of McKean-Vlasov SDEs with linear measure-dependence of the drift.  
\end{abstract} 

\bigskip
\noindent
{\bf Keywords}:
\textit{McKean-Vlasov equations, interacting particle systems, strong solutions, numerical schemes for SDEs}

\section{Introduction} 


A McKean-Vlasov equation (introduced in \cite{MK,MK2}) for a $d$-dimensional process $X=(X_t)_{t \in [0,T]}$, with a given $T >0$, is an SDE where the underlying coefficients depend on the current state $X_t$ and, additionally, on the law of $X_t$, i.e., they have the form
\begin{equation}
\label{eq:MC}
    \mathrm{d}X_t =b(t,X_t, \mathcal{L}_{X_t}) \, \mathrm{d}t + \sigma(t,X_t,\mathcal{L}_{X_t}) \, \mathrm{d}W_t, \quad X_0 = \xi \in L_0^2(\mathbb{R}^d),
\end{equation}
where $W = (W_t)_{t \in [0,T]} $ is an $m$-dimensional standard Brownian motion and $\mathcal{L}_{X_t}$ denotes the marginal law of the process $X$ at time $t$. Here, for $p \geq 2$, $L_{0}^{p}(\mathbb{R}^d)$ denotes the space of $\mathbb{R}^d$-valued, $\mathcal{F}_0$-measurable random variables $Y$ satisfying $\mathbb{E}[|Y|^p] < \infty$.


The existence and uniqueness theory for strong solutions of McKean-Vlasov SDEs with coefficients of linear growth and Lipschitz conditions with respect to the state and the measure is well-established; see, e.g., \cite{AS} for the classical setting. For further specific existence and uniqueness results on weak and strong solutions of McKean-Vlasov SDEs we refer to \cite{BBP, HSS, MVA} and references cited therein. Most relevant to this work, in the case of super-linear drift and diffusion, it is known from \cite{RST,stock1} that McKean-Vlasov SDEs admit a unique strong solution under a monotonicity condition and Lipschitz continuity of the coefficients in the measure component.

Our motivation to study McKean-Vlasov SDEs with non-globally Lipschitz drift (and diffusion) is that several important models in practice involve mean-field terms and do not exhibit the classical global Lipschitz conditions on the coefficients of the SDE. For example, some models in neuroscience, such as the Hodgkin-Huxley and FitzHugh-Nagumo models \cite{BFFT,BO2}, or mean-field equations describing the behaviour of a (large) network of interacting spiking neurons \cite{DINRT}, do not satisfy the classical assumptions (e.g., linear growth) on the drift coefficient.

The simulation of McKean-Vlasov SDEs 
of the form \eqref{eq:MC}
typically involves two steps: a particle approximation and a time-stepping scheme.
For a seminal work on numerical schemes for McKean-Vlasov SDEs we refer the reader to \cite{BO}.

In the first step, at each time $t$, the true measure $\mathcal{L}_{X_t}$ is approximated by the empirical measure 
\begin{equation}
\label{emp-meas}
\mu_t^{\boldsymbol{X}^{N}}(\mathrm{d}x) := \frac{1}{N}\sum_{j=1}^{N} \delta_{X_t^{j,N}}(\mathrm{d}x),
\end{equation}    
where $(\boldsymbol{X}^{N}_t)_{t \in [0,T]} = (X^{1,N}_t, \ldots, X^{N,N}_t)_{t \in [0,T]}^{\top}$ (so-called interacting particle system) is the solution to the $(\mathbb{R}^{d})^N$-dimensional SDE with components 
\begin{equation}\label{eq:PS}
\mathrm{d}X_t^{i,N} = b(t,X_t^{i,N},\mu_t^{\boldsymbol{X}^{N}})  \, \mathrm{d}t + \sigma(t,X_t^{i,N},\mu_t^{\boldsymbol{X}^{N}}) \, \mathrm{d}W_t^{i}, \quad X_{0}^{i,N} = \xi^i, 
\end{equation}
and $(W^{i},\xi^{i})$ are independent copies of $(W,\xi)$, for $i \in  \lbrace 1, \ldots, N \rbrace$. Furthermore, $\delta_{x}$ 
denotes the Dirac measure at $x$. 

Key to the convergence as $N\rightarrow \infty$ is the concept of propagation of chaos.
Consider 
$N$ (independent) processes $X^{i}$ driven by independent Brownian motions $W^{i}$
\begin{equation}
\label{eq:sys}
\mathrm{d}X_t^{i} = b(t,X_t^{i},\mathcal{L}_{X^{i}_t})  \, \mathrm{d}t + \sigma(t,X_t^{i},\mathcal{L}_{X^{i}_t}) \, \mathrm{d}W_t^{i}, \quad X^{i}_0 = \xi^i, 
\end{equation}
which are standard McKean-Vlasov SDEs (and by uniqueness $\mathcal{L}_{X^{i}_t} = \mathcal{L}_{X_t}$).
Then, pathwise propagation of chaos refers to the property 
\begin{equation*}
\lim_{N \to \infty} \max_{i \in \lbrace 1, \ldots, N \rbrace} \mathbb{E} \left[\sup_{t \in [0,T]} |X_t^{i,N} -X_t^{i}|^2 \right] = 0.
\end{equation*}
This type of result is classical under global Lipschitz conditions (see, e.\ g., \cite{RC}), and has been proven in \cite[Proposition 3.1]{RES} under super-linear growth conditions of the drift.
While the rates for the propagation of chaos shown in \cite{RES} suffer from the curse of dimensionality
(i.e., the rate depends on $d$ and diminishes as $d\rightarrow \infty$)
\cite[Lemma 5.1]{DLR} gives for $p \leq 4$ and any $i \in \lbrace 1, \ldots, N \rbrace$ the bound
\begin{equation*}
\mathbb{E} \left[\sup_{t \in [0,T]} |X_t^{i,N}-X_t^{i}|^p \right] \leq C N^{-p/2},
\end{equation*}
where the constant $C>0$ is dimension-independent. However, in \cite[Lemma 5.1]{DLR}, the drift coefficient of the underlying McKean-Vlasov SDE is assumed to satisfy strong regularity assumptions (differentiability, along with Lipschitz and boundedness conditions on the state and measure components) and the diffusion coefficient is assumed to be constant, i.e., the framework is much more restrictive than the setting of the present article (in particular, the super-linear growth case is not covered in \cite{DLR}). 


As McKean-Vlasov equations typically arise from a mean-field approximation to a high- but finite-dimensional particle system, 
a particle approximation to the McKean-Vlasov equation simply reverses this step.
To be more concrete, for instance, the FitzHugh-Nagumo model for $N$ interacting neurons from $P$ populations with different characteristics is a system of $N$ three-dimensional SDEs of the form
\begin{align}
\label{FHN-intro}
    \mathrm{d}X_t^{i,N}  = f_{\alpha}(t,X_t^{i,N}) \, \mathrm{d}t & + g_{\alpha}(t, X_t^{i,N}) \, \begin{bmatrix} 
           \mathrm{d}W_t^{i}   \\
           \mathrm{d}W_t^{i,y} 
         \end{bmatrix}  \nonumber \\
         &  + \sum_{\gamma=1}^P \frac{1}{N_\gamma} \sum_{j,p(j)=\gamma} \Big( b_{\alpha \gamma}(X_t^{i,N},X_t^{j,N}) \, \mathrm{d}t 
         +  \beta_{\alpha \gamma}(X_t^{i,N},X_t^{j,N}) \, \mathrm{d}W_t^{i,\gamma} \Big), 
\end{align}
where $N = \sum_{\gamma=1}^P N_{\gamma}$ and $f_{\alpha}$ satisfies a monotone growth condition but is not globally Lipschitz continuous (see Example 4 in Section \ref{Example:NeuroFHN} for a detailed description).
It is shown in \cite{BFFT,BO2} that the mean-field limit $N\rightarrow \infty$ is described by the three-dimensional McKean-Vlasov SDE
\begin{align} 
\label{MF-FHN-intro}
    \mathrm{d} \bar{X}_t^{\alpha} = f_{\alpha}(t,\bar{X}_t^{\alpha}) \, \mathrm{d}t + \sum_{\gamma=1}^{P} \mathbb{E}_{\bar{Z}} [b_{\alpha \gamma}(\bar{X}_t^{\alpha},\bar{Z}_t^{\gamma})] \,\mathrm{d}t 
    + g_{\alpha}(t,\bar{X}_t^{\alpha}) \, \mathrm{d}W_t^{\alpha} + \sum_{\gamma=1}^{P} \mathbb{E}_{\bar{Z}} [\beta_{\alpha \gamma}(\bar{X}_t^{\alpha},\bar{Z}_t^{\gamma})] \, \mathrm{d}W_{t}^{\alpha \gamma}, 
\end{align} 
for $\alpha \in \lbrace 1, \ldots, P \rbrace$, where $\bar{Z}$ is a vector-valued process independent of $\bar{X}$ with the same law. Conversely, \eqref{FHN-intro} can be interpreted as a particle approximation to \eqref{MF-FHN-intro}.

The focus of this paper is a
 time-stepping scheme for the particle system in \eqref{eq:PS} over some finite time horizon $[0,T]$.
 We have in mind systems which arise from particle approximations of McKean-Vlasov equations, but equally the scheme is of interest for interacting particle systems, potentially high-dimensional ones, in their own right.
Note that at each time-step, the computational cost of simulating the particle system is usually $\mathcal{O}(N^2)$, as interaction terms due to the dependence of the coefficients on the empirical measure have to be computed for each particle. Strategies to reduce this cost include the projected particle method of \cite{BS}. 

In the context of classical SDEs, it is well-documented that the explicit Euler-Maruyama scheme (see \cite{KP}), for which strong convergence results of order $1/2$ are known under Lipschitz-type conditions, is generally not appropriate if we work with drift terms of super-linear growth. Specifically, it was shown in \cite{HJK2} for a certain SDE that the uniform time-step Euler-Maruyama scheme is not stable, i.e., the moments of the discretised process explode as the mesh-size tends to zero, even though a unique solution of the original SDE with bounded moments exists.
A similar phenomenon is observed in the context of particle systems in \cite{RES}, namely
that during the simulation some of the particles strongly oscillate and eventually diverge (as also illustrated by Figure \ref{fig:ParticleDivergence} in this work
for the FitzHugh-Nagumo model).

To overcome this problem in the setting of classical SDEs, several stable time-discretisations have been introduced, including tamed explicit Euler-Maruyama and Milstein schemes \cite{HJK, SA, GW}, balanced schemes \cite{TZ, ZM}, an explicit adaptive Euler-Maruyama method \cite{FG}, a truncated Euler-Maruyama method \cite{XM2} and an implicit Euler-Maruyama scheme \cite{HMS}. For these methods, stability of the discretised process and strong convergence results have been proven.
In \cite{kelly}, an explicit and a semi-implicit adaptive scheme for classical SDEs with non-globally Lipschitz continuous coefficients is studied, assuming
a monotonicity property resulting from a Khasminskii-type condition \cite{khasminskii2011stochastic}, and 
additionally differentiability of the coefficients. A backstop scheme with a strict upper and lower bound on the time-step immediately gives attainability of the final time $T$,
and convergence arbitrarily close to order 1/2 is shown for a choice of the time-step which depends solely on the growth of the drift term. An adaptive time-stepping Milstein method for SDEs with one-sided Lipschitz drift was developed in \cite{kelly2}, which also relies on such a backstop scheme.   

In \cite{RES}, the tamed Euler-Maruyama scheme and an implicit scheme were introduced for McKean-Vlasov SDEs with super-linearly growing drift and a diffusion coefficient which is globally Lipschitz in the state variable. Both coefficients are assumed to be Lipschitz continuous in the measure. 
In this setting, besides pathwise propagation of chaos, the paper establishes strong convergence of the time-discretised particle system to the SDE (\ref{eq:PS}). The recent papers \cite{stock2,stock,CH} introduce and analyse tamed Milstein schemes for (delay) McKean-Vlasov SDEs with a one-sided Lipschitz condition on the drift and a globally Lipschitz continuous diffusion coefficient (in both, the state and measure component), while \cite{stock1} extends this analysis to the framework of McKean-Vlasov SDEs with common noise, where the diffusion coefficient is also allowed to grow super-linearly. 
We refer to \cite{stock1} for examples of equations exhibiting such super-linear diffusion.

The first contribution of the current article is to introduce implementable adaptive time-stepping methods for McKean-Vlasov SDEs with super-linear growth in the drift and diffusion, assuming only a monotonicity condition (see (H.\ref{Assum:B}(\ref{Assum:B2})) in Section \ref{sec:Supiff} below).
The adaptive scheme has to cope with the fact that different time meshes will generally be used for different particles and the mean-field term needs to be approximated efficiently on all such time meshes, a difficulty not encountered for standard SDEs. 

First, we will present an explicit adaptive Euler-Maruyama scheme for the setting where the diffusion coefficient is globally Lipschitz in both components. Here, we need only a mild assumption on the choice of the adaptive time-step (in the spirit of \cite{FG}) in order to achieve stability of the scheme. Then, we 
remove the Lipschitz assumption on the diffusion coefficient 
and impose only a certain Khasminskii-type monotonicity condition. However, we will need stronger assumptions on the time-step function, in order to also control the growth in the diffusion coefficient.
The original analysis is not applicable in this more general setting and a new time-step control with a suitable stopping time argument is needed, which ultimately gives attainability of the final time and stability of the scheme.
This extension is achieved without the need for a (semi-implicit) backstop scheme if the proposed adaptive time-step becomes too small
and equally forms a contribution to the existing literature on adaptive time-stepping schemes for classical (non-measure dependent) SDEs. 

Moreover, we prove moment stability and strong convergence of order $1/2$ in the time-step. Several numerical examples in Section \ref{SECNum} demonstrate that our adaptive scheme often outperforms the tamed Euler-Maruyama scheme introduced in \cite{RES}, i.e., in many cases it gives more accurate numerical approximations or even achieves superior strong convergence rates.

Additionally, we present an adaptive Milstein scheme for a sub-class of McKean-Vlasov SDEs 
where the drift depends on the state and 
linearly on its law but the diffusion coefficient is only state dependent, see equation (\ref{MCVlasov}) below for details. We consider this special class firstly because
many relevant examples arising in practical applications have this form (see, e.g., \cite{CGM} and the references cited therein), and secondly
to avoid introducing a notion of derivatives for functions depending on probability measures (e.g., the Lions derivative; see \cite{CD,stock,CH} for details). We do not foresee any fundamental difficulties in extending the analysis to a more general setting, and leave it to future work.

A new difficulty for the computation and the analysis of adaptive schemes for particle systems, as mentioned above, is the approximation of the empirical measure at all adaptive time points. If the number of particles is large, as for particle approximations of McKean-Vlasov equations, this may result in a high computational cost if the empirical measure is re-evaluated at the smallest time-steps dictated by any particle. To avoid this, we present a scheme where the empirical measure is kept constant over larger, fixed time-steps. This procedure avoids this extra cost but still allows us to obtain the same stability and convergence results as for the more expensive scheme. We prove stability and convergence for both these Euler-Maruyama schemes in the super-linear drift case. We only give the stability proofs for super-linear diffusion and the Milstein scheme for the first variant; while the stability results for the second variant follow along similar lines as the results presented in this article, the proofs of strong convergence are subject to future research.

The remainder of this article is organised as follows: In Section \ref{SECIntro}, we describe the particle method and our adaptive schemes, and give the main convergence result (concerned with a globally Lipschitz continuous diffusion); the proof thereof is deferred to Section \ref{SECPro}. 
Section \ref{Sec:AddSchemes} introduces an adaptive Euler-Maruyama scheme for a more general class where we allow both coefficients to grow super-linearly, and an adaptive Milstein scheme for a more specific class of McKean-Vlasov equations with linear measure-dependence; the proofs are again given in Section \ref{SECPro}.
The numerical results for several standard test cases taken from the literature, notably the FitzHugh-Nagumo model, are presented in Section \ref{SECNum}. 

 We end this section by introducing some basic notations and give background results needed throughout this article. \\ \\
\noindent
\textbf{Preliminaries} \\ \\
\noindent
Let $(\mathbb{R}^d,\left \langle \cdot,\cdot \right \rangle, | \cdot |)$ represent the $d$-dimensional
Euclidean space, and let 
$(\Omega,\mathcal{F},\mathbb{F}=(\mathcal{F}_t)_{t \in [0,T]},\mathbb{P})$, for a given $T>0$, be a complete filtered probability space supporting a $m$-dimensional Brownian motion $(W_t)_{t \in [0,T]}$ with respect to $\mathbb{F}$. In addition, we use $\mathcal{P}(\mathbb{R}^d)$ to denote the family of all probability
measures on $(\mathbb{R}^d,\mathcal{B}(\mathbb{R}^d))$, where $\mathcal{B}(\mathbb{R}^d)$ denotes the Borel $\sigma$-field over $\mathbb{R}^d$, and define the subset of probability measures with finite second moment by
\begin{equation*}
\mathcal{P}_2(\mathbb{R}^d):= \Big \{ \mu\in\mathcal
{P}(\mathbb{R}^d) \Big| \ \int_{\mathbb{R}^d} |x|^2 \mu(\mathrm{d} x)< \infty \Big \}.
\end{equation*}
For all linear operators (e.g., matrices $A \in \mathbb{R}^{d \times m}$) appearing in this article, we will use the standard Hilbert-Schmidt norm denoted by $\| \cdot \|$.  

As metric on the space $\mathcal{P}_2(\mathbb{R}^d)$, we use the Wasserstein distance. For $\mu,\nu \in\mathcal{P}_2(\mathbb{R}^d)$, the Wasserstein distance between $\mu$ and $\nu$ is defined as
\begin{equation*}
\mathcal{W}_2(\mu,\nu) := \inf_{\pi \in \mathcal
{C}(\mu,\nu)} \left( \int_{\mathbb{R}^d \times \mathbb{R}^d} |x-y|^2 \pi(\mathrm{d} x,\mathrm{d} y) \right)^{1/2},
\end{equation*}
where $\mathcal{C}(\mu,\nu)$ means all the couplings of $\mu$  and
$\nu$, i.e., $\pi\in\mathcal {C}(\mu,\nu)$ if and only if
$\pi(\cdot,\mathbb{R}^d)=\mu(\cdot)$ and $\pi(\mathbb{R}^d,\cdot)=\nu(\cdot)$. Further, for $p \geq 2$, $\mathcal{S}^p([0,T])$ refers to the space of $\mathbb{R}$-valued continuous, $\mathbb{F}$-adapted processes, defined on the interval $[0,T]$, with finite $p$-th moments, i.e., processes $(X_t)_{t \in [0,T]}$ satisfying $\mathbb{E} \left[ \sup_{t \in [0,T]} |X_t |^p \right] < \infty$. 


\section{Time-stepping scheme for super-linear drift coefficient}\label{SECIntro}

In Section \ref{sec:PF}, we give a precise set of assumptions for the McKean-Vlasov equation \eqref{eq:MC} which guarantee well-posedness and are needed in the stability and strong convergence analysis of the time-stepping scheme introduced later in this section. Here, we first focus on a diffusion coefficient which is globally Lipschitz continuous in the state and measure component, while in Section \ref{sec:Supiff} we will allow a diffusion coefficient growing super-linearly in the state variable. The non-uniform Euler-Maruyama scheme will be presented in Section \ref{Sec:TS} and the adaptive choice of time-steps and main convergence result in Section \ref{sec:globLipTime}.
\subsection{Problem formulation}\label{sec:PF}
We consider the McKean-Vlasov equation defined by (\ref{eq:MC}), repeated here for convenience,
\begin{equation}\label{eq:MCeq}
    \mathrm{d}X_t =b(t,X_t, \mathcal{L}_{X_t}) \, \mathrm{d}t + \sigma(t,X_t, \mathcal{L}_{X_t}) \, \mathrm{d}W_t, \quad X_0 = \xi \in L_0^p(\mathbb{R}^d),
\end{equation}
where $W$ is a $m$-dimensional Brownian motion and $p \geq 2$. 
We impose the following assumptions for the coefficients: 
\begin{Assumption}\label{Assum:Ax}
The mappings $b:[0,T] \times \mathbb{R}^{d} \times \mathcal{P}_2(\mathbb{R}^d) \to \mathbb{R}^{d}$ and $\sigma: [0,T] \times \mathbb{R}^{d} \times \mathcal{P}_2(\mathbb{R}^d) \to \mathbb{R}^{d \times m}$ are measurable and satisfy:
\begin{enumerate}[(1)]
    \item \label{Assum:Ax1} 
    (Lipschitz condition for diffusion coefficient): There exists a constant $L >0$ such that
    \begin{equation*}
       \| \sigma(t,x,\mu)  - \sigma(t,x',\mu') \| \leq L(|x-x'| + \mathcal{W}_2(\mu, \mu')), \quad \forall  t \in [0,T], \ x \in \mathbb{R}^d \text{ and } \mu, \mu' \in \mathcal{P}_2(\mathbb{R}^d).
    \end{equation*}      
    \item \label{Assum:Ax2} 
    (One-sided Lipschitz condition for drift): There exists a constant $L >0$ such that
    \begin{equation*}
     \left \langle x-x', b(t,x,\mu) - b(t,x',\mu) \right \rangle \leq L |x-x'|^2, \quad \forall  t \in [0,T], \ x, x' \in \mathbb{R}^d \text{ and } \mu \in \mathcal{P}_2(\mathbb{R}^d).
    \end{equation*}    
    \item \label{Assum:Ax3} 
    (Lipschitz measure dependence of drift): There exists a constant $L > 0$ such that
    \begin{equation*}
    | b(t,x,\mu) -b(t,x,\mu') | \leq L\mathcal{W}_2 (\mu, \mu'), \quad \forall  t \in [0,T], \ x \in \mathbb{R}^d \text{ and } \mu, \mu' \in \mathcal{P}_2(\mathbb{R}^d).
    \end{equation*}
    \item \label{Assum:Ax4} 
    (Polynomial growth condition): There exist constants $L > 0$, $q >0$, such that
    \begin{equation*}
      |b(t,x,\mu) -b(t,x',\mu) | \leq L(1+|x|^q +|x'|^q)|x-x'|, \quad \forall  t \in [0,T], \ x, x' \in \mathbb{R}^d \text{ and } \mu \in \mathcal{P}_2(\mathbb{R}^d).
    \end{equation*}
    \item \label{Assum:Ax5}
     ($1/2$-H\"{o}lder-continuity in time): There exists a constant $C >0$ such that 
    \begin{equation*}
     | b(t,x,\mu) -b(t',x,\mu) | +  \| \sigma(t,x,\mu) - \sigma(t',x,\mu) \| \leq C |t -t' |^{1/2}, \quad \forall  t,t' \in [0,T], \ x \in \mathbb{R}^d \text{ and } \mu \in \mathcal{P}_2(\mathbb{R}^d).
    \end{equation*}    
     \item  \label{Assum:Ax6} 
     There exists a constant $C >0$ such that $|b(0,0,\mu)| + \|\sigma(0,0,\mu)\| \leq C, \quad \forall  \mu \in \mathcal{P}_2(\mathbb{R}^d)$.      
\end{enumerate}
\end{Assumption}
\begin{remark}
Note that the 
above assumptions imply the so-called monotone growth condition, i.e., there exist non-negative constants $C_1$ and $C_2$ such that for all $t \in [0,T]$ and all $\mu \in \mathcal{P}_2(\mathbb{R}^d)$
\begin{equation}
\label{eqn:growth}
       \left \langle x,b(t, x, \mu) \right \rangle + \frac{1}{2}\|\sigma(t,x,\mu)\|^2 \leq C_1 + C_2 |x |^2.
\end{equation}
This condition, frequently employed in the literature (see, e.g., \cite{RES}), is needed to guarantee the moment boundedness of our numerical scheme (\ref{eq:TDPS}) presented below.
\end{remark}

Under Assumptions (H.\ref{Assum:Ax}(\ref{Assum:Ax1}))--(H.\ref{Assum:Ax}(\ref{Assum:Ax5})) and for $\xi \in L_0^p(\mathbb{R}^d)$, it is known from \cite{RST} that the SDE (\ref{eq:MCeq}) has a unique solution $X \in \mathcal{S}^p([0,T])$. In Section \ref{sec:Supiff}, (H.\ref{Assum:Ax}(\ref{Assum:Ax1})) and  (H.\ref{Assum:Ax}(\ref{Assum:Ax2})) will be replaced by the weaker Assumption (H.\ref{Assum:B}(\ref{Assum:B2})) which enables us to consider non-globally Lipschitz diffusion coefficients.

We will now briefly illustrate that the interacting particle system defined in (\ref{eq:PS}) is indeed well-posed. Set 
\begin{equation*}
B(t,\boldsymbol{x}^{N}):=(b(t,x_1,\mu^{\boldsymbol{x}^{N}}),\ldots,b(t,x_N,\mu^{\boldsymbol{x}^{N}})),~~~\Sigma(t,\boldsymbol{x}^{N}):=\mbox{diag}(\sigma(t,x_1,\mu^{\boldsymbol{x}^{N}}),\ldots,\sigma(t,x_N,\mu^{\boldsymbol{x}^{N}})),
\end{equation*}
where $\mu^{\boldsymbol{x}^{N}}(\mathrm{d}x):=\frac{1}{N}\sum_{j=1}^N \delta_{x_j}(\mathrm{d}x)$ for
$\boldsymbol{x}^{N}:=(x_1,x_2,\ldots,x_N) \in \mathbb{R}^{dN}$. Note that
\begin{equation*}
\mathcal{W}_2^{2}(\mu^{\boldsymbol{x}^{N}},\mu^{\boldsymbol{y}^{N}})\le \frac{1}{N} \sum_{i=1}^{N}|x_i-y_i|^2 =
\frac{1}{N} |\boldsymbol{x}^{N}-\boldsymbol{y}^{N} |^2,~~~\boldsymbol{x}^{N},\boldsymbol{y}^{N} \in \mathbb{R}^{dN},
\end{equation*}
which is a standard bound for the Wasserstein distance between two empirical measures, see, e.g., \cite{RC}.
Hence, from Assumptions (H.\ref{Assum:Ax}(\ref{Assum:Ax1}))--(H.\ref{Assum:Ax}(\ref{Assum:Ax3})), we readily deduce that there exists a
constant $L>0$ such that
\begin{equation*}
\left \langle \boldsymbol{x}^{N}-\boldsymbol{y}^{N}, B(t,\boldsymbol{x}^{N})-B(t,\boldsymbol{y}^{N}) \right \rangle \leq  L |\boldsymbol{x}^{N}-\boldsymbol{y}^{N}|^2,~~~\|\Sigma(t,\boldsymbol{x}^{N})-\Sigma(t,\boldsymbol{y}^{N})\|^2 \leq
L|\boldsymbol{x}^N-\boldsymbol{y}^N|^2,
\end{equation*}
for any $\boldsymbol{x}^{N},\boldsymbol{y}^{N} \in \mathbb{R}^{dN}$, and the claim follows from \cite[Theorem 3.1.1]{PR}. In addition, we have the stability estimate $\max_{i \in \lbrace 1, \ldots, N \rbrace} \mathbb{E}[\sup_{t \in [0,T]} |X_t^{i,N}|^p] \leq C_{p,T}(1 + \mathbb{E}[|\xi^{i}|^p])$, for some constant $C_{p,T}>0$, see, e.g., \cite[Proposition 5.1]{RES}.

\subsection{Adaptive Euler-Maruyama scheme for super-linear drift}\label{Sec:TS}


For some integer $M>0$, we define a uniform step-size $h=T/M$. We start by stating the standard Euler-Maruyama scheme with uniform time-steps for (\ref{eq:PS}), for $n \in \lbrace 0, \ldots, M-1 \rbrace$,
\begin{equation}\label{EulerMcKean}
    X_{t_{n +1}}^{i,N,M} =  X_{t_n}^{i,N,M} +  b(t_n, X_{t_n}^{i,N,M}, \mu_{t_n}^{\boldsymbol{X}^{N,M}}) \, h +\sigma(t_n, X_{t_n}^{i,N,M}, \mu_{t_n}^{\boldsymbol{X}^{N,M}}) \, \Delta W_{t_n}^i,
\end{equation}
where $t_n =nh$, $\Delta W_{t_n}^i = W_{t_{n+1}}^i - W_{t_n}^i$, $ X_{0}^{i,N,M} =\xi^i$, and
\begin{equation*}
\mu_{t_n}^{\boldsymbol{X}^{N,M}}(\mathrm{d}x):= \frac{1}{N} \sum_{j=1}^{N} \delta_{X_{t_n}^{j,N,M}}(\mathrm{d}x).
\end{equation*}

Extending (\ref{EulerMcKean}), we introduce now an adaptive Euler-Maruyama scheme for the particle system (\ref{eq:PS}). A difficulty arises if we compute for each particle adaptive time-steps based on $X_t^{i,N}$, as
a different time mesh 
may result for each particle. 
Then, at a time point associated with a specific particle, we need an approximation of the empirical measure 
for the computation of the mean-field term in order to evaluate the update for the next time-step.
However, we do not have the value of each particle 
available at that time point. Hence, an approximation to the mean-field term is not readily computable. 

\medskip
We now propose a first scheme that allows the computation of an empirical measure $\mu_{t}^{\hat{\boldsymbol{X}}^N}$ 
for all $t$.
To do so, we start at $t=0$ with $ \hat{X}_{0}^{i,N} =\xi^{i}$ for all $i \in \lbrace 1, \ldots, N \rbrace$. At step $n \ge 0$, we compute for each particle the size of the adaptive time-step $h_n^{i}:=h^{\delta}(\hat{X}_{n}^{i,N})$,
with the time-step function $h^{\delta}$ introduced precisely below. We then perform the Euler-Maruyama update as
\begin{equation*}
\text{\bf  Scheme 1:} \qquad\qquad 
    \hat{X}_{t_{n +1}}^{i,N} =  \hat{X}_{t_{n}}^{i,N} +  b(t_{n}, \hat{X}_{t_{n}}^{i,N}, \mu_{t_{n}}^{\hat{\boldsymbol{X}}^N}) \ h_n^{\min} +\sigma(t_{n}, \hat{X}_{t_{n}}^{i,N}, \mu_{t_{n}}^{\hat{\boldsymbol{X}}^N}) \ \Delta W_{t_n}^i,
\end{equation*}
with step-size $h_n^{\min}:=\min \lbrace h_n^{1}, \ldots, h_n^{N} \rbrace $,
$t_{n+1} = t_{n} + h^{\min}_n$ and $\Delta W_{t_n}^i = W_{t_{n+1}}^i - W_{t_n}^i$,
and with
\begin{align}\label{eq:empMeasure}
\mu_{t_{n}}^{\hat{\boldsymbol{X}}^N}(\mathrm{d}x) :=  \frac{1}{N} \sum_{j=1}^{N} \delta_{\hat{X}_{t_n}^{j,N}}(\mathrm{d}x).
\end{align}
We will later use the notations $n_t:= \max \lbrace n: t_n \leq t \rbrace$ and $\underline{t}:= \max \lbrace t_n \leq t \rbrace$. This allows us to introduce the piecewise constant interpolant process $\bar{X}_{t}^{i,N} := \hat{X}_{\underline{t}}^{i,N}$ and measure $ \mu_{t}^{\bar{\boldsymbol{X}}^N} := \mu_{\underline{t}}^{\hat{\boldsymbol{X}}^N} $, and also the continuous interpolant processes
\begin{align*}
&\hat{X}_{t}^{i,N} :=  \hat{X}_{\underline{t}}^{i,N} + b(\underline{t}, \hat{X}_{\underline{t}}^{i,N}, \mu_{\underline{t}}^{\hat{\boldsymbol{X}}^N})(t-\underline{t}) + \sigma(\underline{t}, \hat{X}_{\underline{t}}^{i,N}, \mu_{\underline{t}}^{\hat{\boldsymbol{X}}^N})(W_{t}^i -W_{\underline{t}}^i),
\end{align*}
with the associated interpolant empirical measure $\mu_{t}^{\hat{\boldsymbol{X}}^N}$,
so that $(\hat{X}_t^{i,N})_{t \in [0,T]}$ is the solution to the SDE
\begin{equation}\label{AdaptiveMcKean}
    \mathrm{d} \hat{X}_{t}^{i,N} = b(\underline{t}, \bar{X}_{t}^{i,N}, \mu_{t}^{\bar{\boldsymbol{X}}^N}) \,  \mathrm{d}t + \sigma(\underline{t}, \bar{X}_{t}^{i,N}, \mu_{t}^{\bar{\boldsymbol{X}}^N}) \, \mathrm{d}W_t^i.
\end{equation}
The stability of this scheme is analysed in Proposition \ref{prop1}, while for the strong convergence analysis, we refer to Section 5.2 of the arXiv version.

In the above discretisation, we compute for each particle $i$ the size of the time-step, determine the smallest one and simulate each particle with the same (small) step-size in order to approximate the measure. This is computationally expensive, as it requires us to take the minimal time-step taken over a large number of particles. As $N\rightarrow \infty$, at each $t$ and for all $x$ there will almost surely be some $i$ for which $\hat{X}_{t}^{i,N}>x$, thus requiring a step-size adapted to this extreme scenario.
Typically, however, at any given time only a very small proportion of the particles will require such a small step-size for stability of the overall system. Choosing the smallest such time-step for the evolution of all particles introduces a dominant, unnecessary cost, which will significantly increase the complexity order. However, for a moderate number of particles this scheme is still tractable.

\medskip
We will now define a more practical scheme for the case that $N$ is large.
For our final numerical procedure, hence, we keep the empirical measure constant in an interval of  length $\delta T$, 
where $\delta = 1/M$ for some integer $M > 0$. 
This avoids having to recompute the measure at each adaptive time point.
In the sequel, let $k_n$ be the integer for which $t_n \in [k_n \delta T, (k_n+1)\delta T)$. Note that $k_n$ might have the same value for different $n$. 
Then, our adaptive Euler-Maruyama method becomes:
\begin{equation*}
\text{\bf  Scheme 2:} \qquad\qquad 
    \tilde{X}_{t_{n+1}}^{i,N} =  \tilde{X}_{t_n}^{i,N} +  b(t_n, \tilde{X}_{t_n}^{i,N}, \mu_{k_n\delta T}^{\tilde{\boldsymbol{X}}^N}) \, h_n^{i} +\sigma(t_n, \tilde{X}_{t_n}^{i,N}, \mu_{k_n \delta T}^{\tilde{\boldsymbol{X}}^N}) \, \Delta W_{t_n}^i,
\end{equation*}
where $h_n^{i}:=\min \lbrace h^{\delta}(\tilde{X}_{t_n}^{i,N}), (k_n+1)\delta T-t_n \rbrace$, $t_{n+1} = t_n + h_n^{i}$, $\Delta W_{t_n}^i = W_{t_{n+1}}^i - W_{t_n}^i$ and $ \tilde{X}_{0}^{i,N} = \xi^i$. Similar to above, we have
\begin{equation*}
\mu_{k_n\delta T}^{\tilde{\boldsymbol{X}}^N}(\mathrm{d}x) := \frac{1}{N} \sum_{j=1}^{N} \delta_{\tilde{X}_{k_n \delta T}^{j,N}}(\mathrm{d}x).
\end{equation*}

We introduce the piecewise constant interpolant process $\tilde{\bar{X}}_{t}^{i,N} = \tilde{X}_{\underline{t}}^{i,N}$, and measure $\mu_{t}^{\tilde{\bar{\boldsymbol{X}}}^N} = \mu_{k_n \delta T}^{\tilde{\bar{\boldsymbol{X}}}^N} $, 
and also the continuous interpolant 
\begin{equation*}
    \tilde{X}_{t}^{i,N} =  \tilde{X}_{\underline{t}}^{i,N} + b(\underline{t}, \tilde{X}_{\underline{t}}^{i,N}, \mu_{k_n\delta T}^{\tilde{\boldsymbol{X}}^N})(t-\underline{t}) + \sigma(\underline{t}, \tilde{X}_{\underline{t}}^{i,N}, \mu_{k_n\delta T}^{\tilde{\boldsymbol{X}}^N})(W_t^i -W_{\underline{t}}^i),
\end{equation*}
so that $(\tilde{X}_{t}^{i,N})_{t \in [0,T]}$ is the solution to the SDE
\begin{equation}\label{eq:TDPS}
    \mathrm{d} \tilde{X}_{t}^{i,N} = b(\underline{t}, \tilde{\bar{X}}_{t}^{i,N}, \mu_{t}^{\tilde{\bar{\boldsymbol{X}}}^N}) \, \mathrm{d}t + \sigma(\underline{t}, \tilde{\bar{X}}_{t}^{i,N}, \mu_{t}^{\tilde{\bar{\boldsymbol{X}}}^N}) \, \mathrm{d}W_t^i.
\end{equation}

Given this framework, we will impose in Section \ref{sec:globLipTime} conditions on the size of the time-step which will allow us to prove attainability of $T$, stability and strong convergence of the scheme. The complexity of the scheme and the main results are also given there.

\subsection{Choice of time-step function and main result}\label{sec:globLipTime}

Compared to the classical Euler-Maruyama scheme with a uniform step-size, we use adaptive steps $h^{\delta}(\hat{X}_{t_n}^{i,N})$ depending on
the state $\hat{X}_{t_n}^{i,N}$ of the $i$-th particle at the current ($n$-th) time-step. For each $0 < \delta \leq 1$, the \textit{time-step function} $h^\delta: \mathbb{R}^{d} \to \mathbb{R}^{+}$ hence controls the size of the time-step, which is relevant especially for large arguments to prevent instabilities. Convergence is then considered as $\delta \to 0$.


We make the following assumptions (based on Section 2 in \cite{FG}) on the function $h^\delta$:
\begin{Assumption}\label{Assum:AxS}
There exists a continuous function $h: \mathbb{R}^{d} \to \mathbb{R}^{+}$ for which
\begin{enumerate}[(1)]
    \item \label{Assum:AxS3} 
    the time-step function $h^{\delta}$ satisfies
        \begin{equation*}
        \delta \min(T,h(x)) \leq h^{\delta}(x) \leq \min(\delta T, h(x)),
    \end{equation*}
    \end{enumerate}
where  the following hold for $h$:
\begin{enumerate}[(1)]
\setcounter{enumi}{1}
   \item \label{Assum:AxS1} 
   there exist constants $a,b,c >0$, such that 
    \begin{equation*}
        h(x) \geq (a |x|^c +b)^{-1}, \quad \forall  x \in \mathbb{R}^d,
    \end{equation*}
    \item \label{Assum:AxS2} 
    and there exist constants $L_c, L_d>0$, such that
    \begin{equation*}
        \left \langle x,b(t, x, \mu) \right \rangle +\frac{1}{2} h(x) |b(t, x, \mu) |^2  \leq L_c |x |^2 + L_d, \quad \forall  t \in [0,T], \ x \in \mathbb{R}^d \text{ and } \mu \in \mathcal{P}_2(\mathbb{R}^d). 
    \end{equation*} 
    For the adaptive Milstein scheme (see Section \ref{sec:globLipMil} for details), the factor $1/2$ is replaced by $3/2$.
\end{enumerate}
\end{Assumption}
\label{sec:globLipComp}

Here, Assumption (H.\ref{Assum:AxS}(\ref{Assum:AxS3})) ties the dependence of $h^\delta(x)$ on $x$, for each $\delta$, to that of a generic function $h$.
Note that the schemes are fully defined in terms of $h^\delta$, while the existence of $h$ is only needed to ascertain the desired theoretical properties of the scheme.
Specifically, 
Assumption
(H.\ref{Assum:AxS}(\ref{Assum:AxS1})) lower bounds the speed of decay of the function $h$ as $|x| \to \infty$ and is necessary to derive a bound for the expected number of time-steps;
Assumption (H.\ref{Assum:AxS}(\ref{Assum:AxS2})) in conjunction with (H.\ref{Assum:Ax}(\ref{Assum:Ax4})) is used to control the super-linear growth of the drift term.
A suitable choice of $c$ in (H.\ref{Assum:AxS}(\ref{Assum:AxS1})) can be deduced from $q$ in Assumption (H.\ref{Assum:Ax}(\ref{Assum:Ax4})), while $a$ depends on the constant factors
in (H.\ref{Assum:Ax}(\ref{Assum:Ax4})), and $b$ simply eliminates a singularity at $x=0$.

Conversely, one can use an appropriately chosen $h$ to define $h^\delta$ systematically.
Consider for simplicity a one-dimensional setting where the drift asymptotically behaves like $-x^{q+1}$ (as per (H.\ref{Assum:Ax}(\ref{Assum:Ax4}))):
choosing first $h(x) = \min(1,|x|^{-q})$
and then $h^\delta$ as either the lower or upper  bound in the double inequality in (H.\ref{Assum:AxS}(\ref{Assum:AxS3})) gives a viable time-step function $h^\delta$.
We give an explicit example for the construction of $h^\delta$, with a suitable choice of $h$ satisfying (H.\ref{Assum:AxS}), in Section \ref{Example:GL}.

Given now the randomness of the step-size at time $t$ through its dependence on $\tilde{X}_{t}^{i,N}$, the number of time-steps $M_T^i$ up to time $T$ is a random variable for each particle $i$. The total expected complexity is therefore characterised by $\sum_{i=1}^N \mathbb{E}[M_T^i]$, and can be estimated as follows.

\begin{prop}\label{PropTS}
\label{prop:exp-nr-steps}
Under 
(H.\ref{Assum:AxS}) and for $(\tilde{X}_t^{i,N})_{t \in [0,T]}$ with bounded moments (uniformly in $t$) up to order $c$,
there exists $C>0$ independent of $\delta$ and $N$ such that 
\begin{equation}
\label{exp-nr-steps}
\mathbb{E} \left[ \frac{1}{N} \sum_{i=1}^{N} M_T^i  \right] \leq C \delta^{-1}.
\end{equation}
\end{prop}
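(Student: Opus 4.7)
The plan is to bound the number of steps pathwise in terms of an integral involving $|\tilde{X}^{i,N}|^c$, and then take expectations using the moment bound hypothesis.

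The starting observation is that assumptions (H.\ref{Assum:AS}(\ref{Assum:AS1})) and (H.\ref{Assum:AS}(\ref{Assum:AS3})) together give a uniform lower bound on the adaptive step: since $h^\delta(x)\ge\delta\min(T,h(x))$ and $h(x)\ge(a|x|^c+b)^{-1}$, a short case distinction yields
\begin{equation*}
  h^\delta(x) \;\ge\; \frac{\delta}{a|x|^c+b+1/T} \qquad \text{for all } x\in\mathbb{R}^d.
\end{equation*}
For Scheme 2 the actual step satisfies $h_n^i=\min\{h^\delta(\tilde{X}^{i,N}_{t_n}),(k_n+1)\delta T-t_n\}$, but since both terms in that minimum are at most $\delta T$ and the minimum is reached for all but at most one step per interval of length $\delta T$, this extra constraint produces only an additive contribution of order $1/\delta$ to $N_T^i$ that can be absorbed into the final constant. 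So I will work as if $h_n^i\ge\delta/(a|\tilde{X}^{i,N}_{t_n}|^c+b+1/T)$.

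Next, I rewrite the trivial identity $1=h_n^i/h_n^i$ and apply the lower bound to get
\begin{equation*}
  N_T^i \;=\; \sum_{n=0}^{N_T^i-1} 1 \;\le\; \frac{1}{\delta}\sum_{n=0}^{N_T^i-1} h_n^i\bigl(a|\tilde{X}^{i,N}_{t_n}|^c+b+1/T\bigr).
\end{equation*}
Because the $t_n$ form a partition of $[0,T]$ and $\tilde{\bar X}^{i,N}_t=\tilde X^{i,N}_{t_n}$ on $[t_n,t_{n+1})$, the right-hand side equals $\delta^{-1}\int_0^T \bigl(a|\tilde{\bar X}^{i,N}_t|^c+b+1/T\bigr)\,\mathrm{d}t$. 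Taking expectations and using the assumed bounded $c$-th moments,
\begin{equation*}
  \mathbb{E}[N_T^i] \;\le\; \frac{1}{\delta}\!\int_0^T \!\bigl(a\,\mathbb{E}[|\tilde{\bar X}^{i,N}_t|^c]+b+1/T\bigr)\mathrm{d}t \;\le\; \frac{C}{\delta},
\end{equation*}
with $C$ independent of $\delta,N$ and (by propagation of the uniform moment bound) independent of $i$. Averaging over $i\in\{1,\dots,N\}$ gives \eqref{exp-nr-steps}.

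I do not expect any serious obstacle: the statement is essentially a deterministic consequence of the time-step assumption, and the only probabilistic ingredient is the uniform-in-$N$ moment bound on $\tilde X^{i,N}_t$, which is the hypothesis of the proposition (and is to be proved elsewhere in the paper). The minor technical point to handle cleanly is the forced shortening of $h_n^i$ at the measure-update grid points $k\delta T$ in Scheme 2, which adds at most $\lceil 1/\delta\rceil$ steps per particle and is harmless for the claimed rate.
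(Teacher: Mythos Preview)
Your argument is correct and close in spirit to the paper's, but the bookkeeping differs in a way worth noting. The paper bounds $N_T^i$ pathwise by
\[
N_T^i \le 1 + T\sup_{0\le t\le T}\frac{1}{h^\delta(\tilde X_t^{i,N})} + T\delta^{-1}
\le T\delta^{-1}\Bigl(a\sup_{0\le t\le T}|\tilde X_t^{i,N}|^c + b + (1+\delta)T^{-1}+1\Bigr),
\]
i.e.\ it controls the number of steps via the \emph{supremum} of $1/h^\delta$ along the path, and therefore needs the moment bound in the form $\mathbb{E}\bigl[\sup_{t\le T}|\tilde X_t^{i,N}|^c\bigr]<\infty$. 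You instead write $N_T^i=\sum 1 =\sum h_n^i/h_n^i$ and convert the sum into the time integral $\delta^{-1}\int_0^T(a|\tilde{\bar X}_t^{i,N}|^c+b+1/T)\,\mathrm{d}t$, which only needs the pointwise moment bound $\sup_{t\le T}\mathbb{E}[|\tilde X_t^{i,N}|^c]<\infty$. Both versions are straightforward consequences of (H.\ref{Assum:AS}(\ref{Assum:AS1})) and (H.\ref{Assum:AS}(\ref{Assum:AS3})), and both handle the forced truncation at the grid points $k\delta T$ by the same observation that it contributes at most $M=1/\delta$ extra steps per particle. Your integral version is marginally more economical in its moment hypothesis, but since Proposition~\ref{prop1} supplies the stronger sup-moment bound anyway, the two approaches are effectively interchangeable here.
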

\begin{proof}
As the upper and lower bounds in (H.\ref{Assum:AxS}(\ref{Assum:AxS1}))--(H.\ref{Assum:AxS}(\ref{Assum:AxS2})) only depend on the state variable (and not on the measure), the analysis of \cite[Lemma 2]{FG} is readily adapted to our setting assuming a moment bound.
\footnote{We will show this in Proposition \ref{prop1}. In our case, in contrast to \cite{FG}, where the initial data is deterministic
we require enough regularity of the random initial value.}
We have 
\begin{align*}
M_T^{i} & \leq 1 + T\sup_{t \in [0,T]} \frac{1}{h^{\delta}(\tilde{X}_{t}^{i,N})} + T\delta^{-1} \\
& \leq 1 + T\delta^{-1} \left( \sup_{t \in [0,T]} \max \lbrace h^{-1}(\tilde{X}_{t}^{i,N}) , T^{-1} \rbrace + 1 \right) \\
& \leq  T\delta^{-1} \left(a \sup_{t \in [0,T]} | \tilde{X}_{t}^{i,N} |^c + b + (1+\delta)T^{-1} + 1 \right).
\end{align*}
Note that the term $T\delta^{-1}$ in the first inequality appears because $h_n^{i}$ may take a value which is smaller than the step-size computed by $h^{\delta}$, but this happens at most once in each subinterval of length $\delta T$. 

This estimate implies (\ref{exp-nr-steps}) 
because of the assumed moment bound on $(\tilde{X}_t^{i,N})_{t \in [0,T]}$.
\end{proof}


\noindent
We now give our main result on strong 
convergence of the adaptive Euler-Maruyama scheme, Scheme 2, and recall pathwise strong propagation of chaos (from  \cite{RES}). To do so, we require the following definition:
\begin{equation*}
\varphi(N)=
\begin{cases}
N^{-1/2}, &\text{ for }  d<4, \\
N^{-1/2}\log N, &\text{ for }  d=4, \\
N^{-2/d}, &\text{ for } d>4.
\end{cases}
\end{equation*}
\begin{theorem}
\label{thm:main}
Let the SDE (\ref{eq:MCeq}) satisfy Assumption (H.\ref{Assum:Ax}), $\xi \in L^p_0(\mathbb{R}^d)$, for a sufficiently large $p$, and assume that the time-step function $h^{\delta}$ satisfies (H.\ref{Assum:AxS1}). Then, there exist constants $C, \widetilde{C} >0$ such that 
\begin{equation}
\label{eqn:mainthm}
   \max_{i \in \lbrace 1, \ldots, N \rbrace}  \mathbb{E}\left[ \sup_{t \in [0,T]} | X^{i}_t - \tilde{X}_t^{i,N} |^2 \right] \leq C \left( \delta + \varphi(N)
   \right)
    \le \widetilde{C} \left(\mathbb{E} \left[ \frac{1}{N} \sum_{i=1}^{N} M_T^i  \right]\right)^{-1} + C \varphi(N),
\end{equation}
where $X^{i}$ and $\tilde{X}^{i,N}$ are defined by (\ref{eq:sys}) and (\ref{eq:TDPS}), respectively.
\end{theorem}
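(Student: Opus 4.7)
The plan is to combine a pathwise propagation of chaos estimate with a discretisation error bound for the adaptive scheme applied to the interacting particle system \eqref{eq:PS}. Using the triangle inequality,
\begin{equation*}
    \mathbb{E}\Big[\sup_{0 \leq t \leq T} |X_t^i - \tilde{X}_t^{i,N}|^2\Big] \leq 2\mathbb{E}\Big[\sup_{0 \leq t \leq T} |X_t^i - X_t^{i,N}|^2\Big] + 2\mathbb{E}\Big[\sup_{0 \leq t \leq T} |X_t^{i,N} - \tilde{X}_t^{i,N}|^2\Big].
\end{equation*}
The first term is controlled by \cite[Proposition 3.1]{RES} combined with the dimension-dependent Wasserstein convergence rate, yielding the $\varphi(N)$ contribution. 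The second, fully discrete error term, should contribute the $\delta$ bound. The final inequality in \eqref{eqn:mainthm} then follows immediately by inverting the bound $\mathbb{E}[\frac{1}{N}\sum_i N_T^i] \leq C\delta^{-1}$ from Proposition \ref{prop:exp-nr-steps}, applied using moment stability of $\tilde{X}^{i,N}$ up to order $c$.

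For the discretisation error, I would apply It\^o's formula to $|X_t^{i,N} - \tilde{X}_t^{i,N}|^2$ using \eqref{eq:PS} and \eqref{eq:TDPS}, and then decompose the drift increment into
\begin{align*}
b(t, X_t^{i,N}, \mu_t^N) - b(\underline{t}, \tilde{\bar{X}}_t^{i,N}, \tilde{\bar{\mu}}_t^N)
= &\,[b(t, X_t^{i,N}, \mu_t^N) - b(t, \tilde{X}_t^{i,N}, \mu_t^N)] \\
&+ [b(t, \tilde{X}_t^{i,N}, \mu_t^N) - b(t, \tilde{X}_t^{i,N}, \tilde{\bar{\mu}}_t^N)] \\
&+ [b(t, \tilde{X}_t^{i,N}, \tilde{\bar{\mu}}_t^N) - b(\underline{t}, \tilde{X}_t^{i,N}, \tilde{\bar{\mu}}_t^N)] \\
&+ [b(\underline{t}, \tilde{X}_t^{i,N}, \tilde{\bar{\mu}}_t^N) - b(\underline{t}, \tilde{\bar{X}}_t^{i,N}, \tilde{\bar{\mu}}_t^N)].
\end{align*}
The first term is handled by the one-sided Lipschitz condition (H.\ref{Assum:A}(\ref{Assum:A2})) after pairing with $X_t^{i,N} - \tilde{X}_t^{i,N}$, the second by the Lipschitz-in-measure (H.\ref{Assum:A}(\ref{Assum:A3})), the third by H\"older continuity in time (H.\ref{Assum:A}(\ref{Assum:A5})), and the fourth by the polynomial growth (H.\ref{Assum:A}(\ref{Assum:A4})) followed by H\"older's inequality, paying with high moments of $\tilde{X}^{i,N}$. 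A parallel decomposition is applied to the diffusion increment, with (H.\ref{Assum:A}(\ref{Assum:A1})) and the Burkholder-Davis-Gundy inequality providing quadratic control after taking $\sup_{0\leq s \leq t}$.

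The crux is controlling the one-step deviations $\mathbb{E}[|\tilde{X}_t^{i,N} - \tilde{\bar{X}}_t^{i,N}|^{2q+2}]$ and the measure deviation $\mathcal{W}_2^2(\tilde{\bar\mu}_t^N,\mu_t^N)$ uniformly in $t$. For the former, using \eqref{eq:TDPS} on the current step gives $\tilde{X}_t^{i,N} - \tilde{\bar{X}}_t^{i,N} = b\cdot(t-\underline{t}) + \sigma \cdot (W_t^i - W_{\underline t}^i)$ with $t - \underline{t} \leq h_n^i \leq \delta T$; combined with (H.\ref{Assum:AS}(\ref{Assum:AS2})) (which bounds $h^\delta(x)|b|^2 \lesssim 1+|x|^2$) and Assumption (H.\ref{Assum:AS}(\ref{Assum:AS3})), one obtains a bound of order $\delta$ provided sufficiently high moments of $\tilde{X}^{i,N}$ are available---this is why $X_0 \in L_0^m$ for large $m$. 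For the measure term, one splits $\mathcal{W}_2^2(\tilde{\bar{\mu}}_t^N,\mu_t^N) \leq 2\mathcal{W}_2^2(\tilde{\bar\mu}_t^N, \bar\mu_t^N) + 2\mathcal{W}_2^2(\bar\mu_t^N, \mu_t^N)$ where $\bar\mu_t^N := \frac{1}{N}\sum_j \delta_{X_{k_n\delta T}^{j,N}}$, bounding the first by the empirical averaging inequality $\frac{1}{N}\sum_j |X_{k_n\delta T}^{j,N} - \tilde{X}_{k_n\delta T}^{j,N}|^2$ (symmetric in $j$) and the second by the H\"older continuity in time of the true particle system over intervals of length at most $\delta T$.

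After substituting back, taking expectations, and using symmetry across particles to identify the maximum over $i$ with the average, a Gronwall argument (with the exponentially growing factor due to the polynomial drift terms absorbed via the high moments) closes the recursion and delivers a bound of order $\delta$. The main obstacle I anticipate is establishing moment stability of the scheme \eqref{eq:TDPS} for orders high enough that $\mathbb{E}[(1+|\tilde{X}|^{2q})^2 |\tilde{X}-\tilde{\bar X}|^2]$ is of order $\delta$; this relies crucially on (H.\ref{Assum:AS}(\ref{Assum:AS2})), which is exactly the Khasminskii-type ingredient that replaces the taming device and prevents moment explosion despite the super-linear drift. This moment control is essentially Proposition \ref{prop1} (referenced in the footnote of Proposition \ref{prop:exp-nr-steps}) and must be proved uniformly in $\delta$ and $N$.
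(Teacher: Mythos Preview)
Your overall strategy is correct: the triangle split into propagation of chaos plus time-discretisation error, the invocation of \cite[Proposition~3.1]{RES} for the $\varphi(N)$ term, and the inversion of Proposition~\ref{prop:exp-nr-steps} for the second inequality in \eqref{eqn:mainthm} all match the paper exactly.

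For the discretisation error $\mathbb{E}[\sup_t|X_t^{i,N}-\tilde{X}_t^{i,N}|^2]$, however, you take a different route from the paper. You apply It\^o's formula to the squared error of the continuous interpolant, decompose the drift and diffusion increments, invoke BDG for the martingale part, and close via Gronwall in continuous time. This is precisely what the paper does for \emph{Scheme~1} in Proposition~\ref{prop2}, but \emph{not} what it does here for Scheme~2. In Section~\ref{sec:main} the paper instead expands $|X_t^{i,N}-\tilde{X}_t^{i,N}|^2$ directly over a single step $[\underline{t},t]$, takes expectation (without the supremum), and derives a discrete recursion of the form
\[
\mathbb{E}\big[|X_t^{i,N}-\tilde{X}_t^{i,N}|^2\big]\;\leq\; C(1+\delta)\,\mathbb{E}\big[|X_{\underline{t}}^{i,N}-\tilde{X}_{\underline{t}}^{i,N}|^2\big]+C\delta^2,
\]
which is then iterated over the $\mathcal{O}(\delta^{-1})$ steps using Proposition~\ref{PropTS}. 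The frozen-measure lag (from $\tilde{\mu}_{k_n\delta T}^N$ to time $\underline{t}$) is controlled via the one-step estimate \eqref{eq:MS}.

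Both approaches are valid. Your It\^o/BDG/Gronwall route has the advantage of delivering the supremum inside the expectation automatically, which the paper's discrete argument does not directly provide (the paper's recursion is pointwise in $t$ and the passage to $\sup_t$ is left implicit). The paper's discrete one-step method, on the other hand, sidesteps the need to carry the polynomial growth term $\tilde{L}(\hat{X},\bar{X})^2$ through a continuous-time Gronwall and makes the role of the measure-freezing interval $[k_n\delta T,(k_n+1)\delta T)$ more transparent. Your handling of the measure deviation, splitting $\mathcal{W}_2^2(\tilde{\bar{\mu}}_t^N,\mu_t^N)$ through an intermediate empirical measure of $X^{j,N}$ at time $k_n\delta T$, is slightly different from the paper's decomposition (which inserts $\tilde{\mu}_{\underline{t}}^N$ instead), but both lead to the same $\mathcal{O}(\delta)$ bound.
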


The proof of the first inequality in \eqref{eqn:mainthm} is found in Section \ref{SECPro}, specifically Subsection \ref{sec:main}, while the second inequality follows immediately from
Proposition \ref{prop:exp-nr-steps}.

Theorem \ref{thm:main}
shows a strong convergence of order $1/2$ in terms of the expected average number of time-steps over all particles.
Also, recall that the additional complexity in terms of number of particles is, in the worst case, of order $N^2$. This results from evaluating potentially different functions of the empirical measure (with $N$ terms) for all particles. In many examples, like in our tests in Sections \ref{Example:FG} and \ref{Example:GL} where the interaction term is simply an expectation and identical for all particles, the complexity is $\mathcal{O}(N)$.

\section{Extension to super-linear diffusion and higher order schemes}\label{Sec:AddSchemes}
Here, we will discuss additional adaptive time-stepping schemes. Specifically, in Section \ref{sec:Supiff}, we propose a stable Euler-Maruyama scheme allowing for a super-linearly growing diffusion coefficient,
in addition to a super-linear drift term, subject only to a mild monotonicity assumption. In Section \ref{sec:globLipMil}, we will present an adaptive Milstein scheme for a special class of McKean-Vlasov equations where the diffusion only depends on the state of the process and is assumed to be globally Lipschitz continuous.

\subsection{Super-linear diffusion coefficient}\label{sec:Supiff}
We will consider an adaptive Euler-Maruyama scheme, defined as in (\ref{eq:TDPS}) and (\ref{AdaptiveMcKean}), in the setting where both the drift and diffusion coefficient grow super-linearly in the state component. 
To guarantee well-posedness of (\ref{eq:MCeq}) in this more general framework and stability of the resulting adaptive Euler-Maruyama scheme, we will need to replace (H.\ref{Assum:Ax}(\ref{Assum:Ax1})) and (H.\ref{Assum:Ax}(\ref{Assum:Ax2})) and by the following new set of assumptions:
\begin{Assumption}\label{Assum:B}
\begin{enumerate}[(1)]
\item \label{Assum:B1} 
    (Lipschitz measure dependence of diffusion coefficient): There exists a constant $L >0$ such that
    \begin{equation*}
       \| \sigma(t,x,\mu)  - \sigma(t,x,\mu') \| \leq L \mathcal{W}_2(\mu, \mu'), \quad \forall  t \in [0,T], \ x \in \mathbb{R}^d \text{ and } \mu, \mu' \in \mathcal{P}_2(\mathbb{R}^d).
    \end{equation*}      
\item \label{Assum:B2}
(Monotonicity condition): Let $p \geq 2$ given. There exists a constant $L_1>0$ such that
  \begin{equation}\label{as:coer}
       \left \langle x - x' ,b(t, x, \mu) - b(t, x', \mu) \right \rangle + \frac{p+1}{2}\|\sigma(t,x,\mu) - \sigma(t,x',\mu) \|^2 \leq L_1 |x -x' |^2,
\end{equation}  
for all $t \in [0,T]$, $x, x' \in \mathbb{R}^d$ and $\mu \in \mathcal{P}_2(\mathbb{R}^d)$.
\end{enumerate}
\end{Assumption}
Note that equation (\ref{as:coer}) and (H.\ref{Assum:Ax}(\ref{Assum:Ax6})) imply
\begin{equation*}
       \left \langle x,b(t, x, \mu) \right \rangle + \frac{p-1}{2}\|\sigma(t,x,\mu)\|^2 \leq C_1 + C_2 |x |^2, \quad \forall t \in [0,T],  \ x \in \mathbb{R}^d \text{ and } \mu \in \mathcal{P}_2(\mathbb{R}^d), 
\end{equation*}
for some $C_1, C_2 > 0$.

Under Assumptions (H.\ref{Assum:B}), (H.\ref{Assum:Ax}(\ref{Assum:Ax3}))--(H.\ref{Assum:Ax}(\ref{Assum:Ax6})) and for $X_0 \in L_0^p(\mathbb{R}^d)$, it is known from \cite[Theorem 2.1]{stock1} that the SDE (\ref{eq:MCeq}) has a unique solution $X \in \mathcal{S}^k([0,T])$ for any $k < p$. A similar monotonicity condition to \eqref{Assum:B2} is imposed in the works \cite{SA2,KUMAR} on tamed schemes for non-measure dependent equations with super-linearly growing diffusion coefficient. We refer the reader to \cite[Chapter 5]{XM} for a general discussion of such a monotonicity condition in the framework of SDEs with locally Lipschitz continuous coefficients.

To make the adaptive scheme stable in this framework, we require a modification of (H.\ref{Assum:AxS}(\ref{Assum:AxS2})): 
\begin{Assumption}\label{Assum:BS2}
 For some constant $C>0$ and $q$ from  (H.\ref{Assum:Ax}(\ref{Assum:Ax4})) we have
    \begin{align*}
        h(x) \leq C (1 + | x|^{3q})^{-1}, \quad \forall x  \in \mathbb{R}^{d}.
    \end{align*} 
\end{Assumption}

\begin{remark}\label{remark:SupeDiff}
Due to  (H.\ref{Assum:Ax}(\ref{Assum:Ax4})), (H.\ref{Assum:Ax}(\ref{Assum:Ax6})) and (H.\ref{Assum:B}(\ref{Assum:B2})), there exist constants $C_1, C_2>0$ (depending on the constants appearing in these three assumptions) such that
\begin{align*}
& | b(t,x,\mu) | \leq C_1(1+ | x |^{q+1}), \quad  \forall  t \in [0,T], x \in \mathbb{R}^d \text{ and } \mu \in \mathcal{P}_2(\mathbb{R}^d), \\
& \| \sigma(t,x,\mu) \|^2 \leq C_2(1+| x|^{q+2}), \quad  \forall  t \in [0,T], \ x \in \mathbb{R}^d \text{ and } \mu \in \mathcal{P}_2(\mathbb{R}^d).
\end{align*}
These growth estimates along with (H.\ref{Assum:BS2}) imply that for some constant $C_3>0$, 
\begin{align*}
& | b(t,x,\mu) | \| \sigma(t,x,\mu) \| (h(x))^{1/2} \leq C_3 (1+ | x |^2), \quad  \forall  t \in [0,T], \ x \in \mathbb{R}^d \text{ and } \mu \in \mathcal{P}_2(\mathbb{R}^d),
\end{align*}
which will be employed in Section \ref{sec:supDiff} to prove stability of the Euler-Maruyama time-stepping scheme. 

The condition (H.\ref{Assum:BS2}) is stronger than (H.\ref{Assum:AxS}(\ref{Assum:AxS2})).
Assume (H.\ref{Assum:Ax}(\ref{Assum:Ax4})) holds for some $q$, then $|b(t,x,\mu)| = \mathcal{O}(|x|^{q+1})$, and it is sufficient to choose 
$h(x)=\mathcal{O}(|x|^{-q})$, as noted earlier. The stronger decay imposed by (H.\ref{Assum:BS2}) does not lead to a worse complexity order.
\end{remark}

Note that the structure of Assumption (H.\ref{Assum:BS2}) still allows us to impose (H.\ref{Assum:AxS}(\ref{Assum:AxS1})), which guarantees the boundedness of the expected number of time-steps.

We formulate the strong convergence result only for Scheme 1. In Section \ref{sec:supDiff}, we will prove stability of the scheme only, as the convergence analysis follows closely the arguments presented in \cite[Theorem 3.1]{stock1} once stability is established. 
\begin{theorem}\label{thm:Diffsup}
Let $p>0$, $\xi \in L^k_0(\mathbb{R}^d)$, for a sufficiently large $k$ (depending on $p$), and let Assumptions (H.\ref{Assum:B}) and (H.\ref{Assum:Ax}(\ref{Assum:Ax3}))--(H.\ref{Assum:Ax}(\ref{Assum:Ax6})) hold. If the time-step function $h^{\delta}$ satisfies Assumptions (H.\ref{Assum:BS2}) and (H.\ref{Assum:AxS}(\ref{Assum:AxS3})), then there exists a constant $C>0$ such that
\begin{equation*} 
\max_{i \in \lbrace 1, \ldots, N \rbrace} \mathbb{E} \left[ \sup_{t \in [0,T]} | \hat{X}_t^{i,N} - X_t^{i,N}|^p \right] \leq C \delta^{p/2}, 
\end{equation*} 
where ${X}^{i,N}$ and $\hat{X}^{i,N}$ are defined by \eqref{eq:PS} and (\ref{AdaptiveMcKean}), respectively.
\end{theorem}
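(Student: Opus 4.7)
The plan is to prove the theorem in three steps that mirror the authors' remark that the convergence portion follows from \cite[Theorem 3.1]{stock1} once stability has been established. Step (a) establishes the a~priori moment bound $\max_i \mathbb{E}[\sup_{t \in [0,T]} |\hat{X}_t^{i,N}|^m] \leq C$ for $m$ large enough (at least of the order dictated by the polynomial growth of $b$, $\sigma$ and the target exponent $p$); Step (b) yields the one-step error estimate $\mathbb{E}[\sup_t |\hat{X}_t^{i,N} - \bar{X}_t^{i,N}|^p] \leq C \delta^{p/2}$; Step (c) applies It\^o's formula to $|\hat{X}_t^{i,N} - X_t^{i,N}|^p$ and combines the monotonicity assumption (H.\ref{Assum:B}(\ref{Assum:B2})), the Lipschitz-in-measure conditions (H.\ref{Assum:A}(\ref{Assum:A3})) and (H.\ref{Assum:B}(\ref{Assum:B1})), the H\"older-in-time (H.\ref{Assum:A}(\ref{Assum:A5})), together with the outputs of (a)--(b), to close a Gr\"onwall inequality.

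For stability in Step (a), I would apply It\^o to $|\hat{X}_t^{i,N}|^m$ along the SDE (\ref{AdaptiveMcKean}) and split $\hat{X}_t = \bar{X}_t + (\hat{X}_t - \bar{X}_t)$ inside the drift inner product. On the $\bar{X}_t$ part, the derived monotone growth estimate $\langle \bar{X}_t, b\rangle + \tfrac{p-1}{2}\|\sigma\|^2 \leq L_3 + L_4 |\bar{X}_t|^2$ absorbs both the drift pairing and the dominant second-order It\^o contribution. The leftover perturbation terms involve $|b|^2 (t-\underline{t})$ and $|b|\,\|\sigma\|\,(t-\underline{t})^{1/2}$, which are precisely those controlled by (H.\ref{Assum:BS2}): Remark~\ref{remark:SupeDiff} already provides $|b|\,\|\sigma\|\,\sqrt{h} \leq L(1+|x|^2)$, and an analogous comparison of growth exponents gives $|b|^2 h \leq C(1+|x|^2)$. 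Using $(t-\underline{t}) \leq h(\bar{X}_{\underline{t}})$ keeps each correction on the \emph{good} side of a Gr\"onwall loop. A standard localisation via the first exit time from a ball of radius $R$, BDG on the martingale remainder, Gr\"onwall, and $R \to \infty$ then produce the moment bound.

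Step (b) is immediate from $\hat{X}_t - \bar{X}_t = b(\underline{t},\bar{X}_{\underline{t}},\bar{\mu}_{\underline{t}})(t-\underline{t}) + \sigma(\underline{t},\bar{X}_{\underline{t}},\bar{\mu}_{\underline{t}})(W_t - W_{\underline{t}})$ combined with BDG, the crude bound $(t-\underline{t}) \leq \delta T$, and the polynomial growth from Remark~\ref{remark:SupeDiff} fed by Step (a). For the convergence Step (c), I apply It\^o to $|\hat{X}_t^{i,N} - X_t^{i,N}|^p$ and decompose each drift and diffusion difference into a time, measure, and state perturbation via intermediate arguments so that a monotone pairing is extracted as the dominant term; each perturbation is then estimated with the corresponding assumption. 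The polynomial growth (H.\ref{Assum:A}(\ref{Assum:A4})) lets me swap $\bar{X}_s$ for $\hat{X}_s$ inside the monotone pairing at a cost controlled by Step (b), H\"older's inequality, and Step (a). After BDG on the stochastic integrals, Gr\"onwall closes the loop.

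The principal obstacle, as the authors signal by promising to prove only stability in Section~\ref{sec:supDiff}, is Step (a). One must verify that the enhanced decay $h(x) = O(|x|^{-3q})$ imposed by (H.\ref{Assum:BS2}) really does compensate for the super-linear diffusion: when expanding $|\hat{X}_t|^m$ via It\^o, both the $|b|^2 h$ term from drift-squared and the $\|\sigma\|^2$ term from the second-order trace appear, and the monotonicity condition controls only their weighted sum under an exponent constraint tied to $p$. Making the bookkeeping in $q$, $p$, $m$ tight so that the moment bound holds for $m$ large enough to feed back into Steps (b) and (c) is the crux of the argument and is precisely what motivates the form of (H.\ref{Assum:BS2}).
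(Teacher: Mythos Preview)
Your three-step plan mirrors the paper's: establish moment stability (this is Proposition~\ref{prop1N} in the paper), derive a one-step estimate, and then run the convergence argument of \cite[Theorem~3.1]{stock1}. The perturbation control you describe in Step~(a)---splitting $\langle \hat{X}_s, b\rangle = \langle \bar{X}_s, b\rangle + \langle \hat{X}_s-\bar{X}_s, b\rangle$ and bounding the correction via (H.\ref{Assum:BS2}) through the inequalities of Remark~\ref{remark:SupeDiff}---is exactly what the paper does.

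The substantive difference is in how the supremum enters the expectation in Step~(a). The paper does \emph{not} use BDG: after localising at $\tau_R = \min_i \inf\{t:|\hat{X}_t^{i,N}|>R\}$ it takes plain expectation of the It\^o expansion of $(1+|\hat{X}_{t\wedge\tau_R}^{i,N}|^2)^{p/2}$, so the stochastic integral vanishes; Gr\"onwall then gives $\sup_t \mathbb{E}\big[(1+|\hat{X}_{t\wedge\tau_R}^{i,N}|^2)^{p/2}\big] \leq C$ uniformly in $R$, and the Gy\"ongy--Krylov lemma (Proposition~\ref{prop:Gy}) upgrades this to $\mathbb{E}\big[\sup_t (1+|\hat{X}_{t\wedge\tau_R}^{i,N}|^2)^{m/2}\big] \leq C$ for any $m<p$. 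This exponent drop is why Proposition~\ref{prop1N} is stated only for $m<p$. Your direct BDG route hits a genuine obstacle: BDG on the martingale remainder leaves an extra $(1+|\hat{X}_s|^2)^{m/2-1}\|\sigma(\bar{X}_s,\bar{\mu}_s)\|^2$ inside the Gr\"onwall loop, which for super-linear $\sigma$ does \emph{not} factor as $C(1+|\hat{X}_s|^2)^{m/2}$. One can rescue it by working with $m<p$ and re-absorbing the surplus $\|\sigma\|^2$ into the monotone-growth slack $\tfrac{p-1}{2}-\tfrac{m-1}{2}$ after a Young split with parameter tuned to that slack---but that is precisely the ``bookkeeping in $q,p,m$'' you flag without carrying out. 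The Gy\"ongy--Krylov route achieves the same exponent reduction without having to balance a BDG constant against the monotonicity coefficient.

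A secondary technical point: the paper's one-step input is the \emph{joint} bound
\[
\mathbb{E}\big[\,|\hat{X}_t-\hat{X}_{\underline{t}}|^{p}\,|b(\underline{t},\bar{X}_t,\bar{\mu}_t)|^{p}\,\big] \leq C\,\mathbb{E}\big[(1+|\hat{X}_{\underline{t}}|^2)^{p/2}\big]
\]
(equation~(\ref{eq:onestep})), obtained by conditioning on $\mathcal{F}_{\underline{t}}$ before invoking (H.\ref{Assum:BS2}). This is what closes the cross term $F$ at the \emph{same} moment order. Your plain Step~(b) combined with H\"older would instead demand control of $\mathbb{E}[|b|^{p}]\sim \mathbb{E}[|\hat{X}|^{p(q+1)}]$, which is not yet available inside the stability argument and would create a circularity.
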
 

\subsection{Adaptive Milstein scheme}\label{sec:globLipMil} 
Here, we consider for ease of demonstration specifically McKean-Vlasov SDEs of the form (see \cite{stock,CH} for tamed schemes in a more general framework) 
\begin{equation}\label{MCVlasov}
    \mathrm{d}X_t = \int_{\mathbb{R}^{d}} b(X_t,y)\,  \mathcal{L}_{X_t}(\mathrm{d}y) \, \mathrm{d}t + \sigma(X_t) \, \mathrm{d}W_t, \quad X_0 = \xi \in L_0^p(\mathbb{R}^d),
\end{equation} 
for some $p \geq 2$ and $b:\R^d \times \R^d \to \R^d$.

Using the standard particle method to approximate the true measure, we arrive at 
\begin{equation}
\label{mil_ip} 
    \mathrm{d}X_t^{i,N} = \frac{1}{N}\sum_{j=1}^N b(X_t^{i,N},X_t^{j,N}) \, \mathrm{d}t + \sigma(X_t^{i}) \, \mathrm{d}W_t^{i}, \quad X_0^{i,N} = \xi^{i},
\end{equation}
where $\xi^{i}$ are again independent copies of $X_0$. In the sequel, we use $\sigma = (\sigma_1, \sigma_2, \ldots, \sigma_m)$, where $\sigma_i = (\sigma_{1,i}, \ldots, \sigma_{d,i})^{\top}$, for $i \in \lbrace 1, \ldots, m \rbrace$.
For $x=(x_1, \ldots, x_d)$, we denote
$    L^{j_1}: = \sum_{k=1}^d \sigma_{k,j_1} \frac{\partial}{\partial x_k}$.

We then make the following assumptions on $b$ and $\sigma$:
\begin{Assumption}\label{Assum:AxM}
\begin{enumerate}[(1)]
    \item \label{Assum:AxM1} (Lipschitz condition for diffusion coefficient):
    There exists a constant $L>0$ such that 
    \begin{equation*}
     \| \sigma(x) -\sigma(x') \| \leq L | x- x' |, \quad \forall x,x' \in \mathbb{R}^d. 
     \end{equation*}         
    \item \label{Assum:AxM2} 
    Every $\sigma_i$, $i \in \lbrace 1, \ldots, m \rbrace$, is continuously differentiable and there exists a constant $L>0$ such that
     \begin{equation*}
      | L^{j_1} \sigma_{j_2}(x) - L^{j_1} \sigma_{j_2}(x') |  \leq L | x - x' |, \quad j_1, j_2 \in \lbrace 1, \ldots, m \rbrace, \quad \forall x,x' \in \mathbb{R}^d. 
     \end{equation*}
    \item \label{Assum:AxM3}  (One-sided Lipschitz condition for drift):
    There exists a constant $L>0$ such that 
     \begin{equation*}
     \left \langle x-x' , b(x,y)-b(x',y)  \right \rangle \leq L | x -x' |^2, \quad \forall x,x',y \in \mathbb{R}^d. 
     \end{equation*}
    \item \label{Assum:AxM4} (Polynomial growth condition):
    There exist constants $L,q>0$ such that
    \begin{equation*}
    |  b(x,y) - b(x',y) | \leq L (1 + | x |^{q} + | x' |^q)| x-x' |, \quad \forall x,x',y \in \mathbb{R}^d.
    \end{equation*}
    \item \label{Assum:AxM5} (Lipschitz condition for drift kernel):
     There exists a constant $L>0$ such that
    \begin{equation*}
     |  b(x,y) - b(x,y') | \leq L |y-y' |, \quad  \forall x,y,y' \in \mathbb{R}^d.
    \end{equation*} 
    \item \label{Assum:AxM6}  
    There exists a constant $C>0$ such that $|b(0,y)| \leq C, \quad  \forall y \in \mathbb{R}^d$.
\end{enumerate}
\end{Assumption}
These assumptions will guarantee the existence of moments of the processes defined by (\ref{eq:MilAdap}) and (\ref{TameMil}) below.

To prove a strong convergence result, additional assumptions on $b$ are needed:
\begin{Assumption}\label{Assum:AxAM1}
     The functions $\mathbb{R}^d \ni x \mapsto b(x, y)$ and $\mathbb{R}^d \ni  y \mapsto b(x,y)$ are continuously differentiable 
     for all $y \in \mathbb{R}^d$ and $x \in \mathbb{R}^d$, respectively,
     and there exist constants $L_1, L_2, r>0$ such that 
    \begin{align*}
    & \| \partial_x b(x,y) -  \partial_x b(x',y) \| \leq L_1 (1+|x |^r + | x'|^r) | x-x' |, \\
    & \| \partial_y b(x,y) -  \partial_y b(x,y') \| \leq L_2 | y-y' |, \quad  \forall x, y, x', y' \in \mathbb{R}^d.
    \end{align*}   
\end{Assumption}

In the sequel, we further impose for simplicity a commutativity condition on the diffusion matrix
\begin{equation}
\label{eq_comm}
L^{j_1} \sigma_{k,j_2} = L^{j_2} \sigma_{k,j_1}, \quad j_1, j_2 \in \lbrace 1, \ldots, m \rbrace, \quad k \in \lbrace 1, \ldots, d \rbrace.
\end{equation}
This implies that the so-called \emph{L\'{e}vy area} (see, e.g., \cite{KP}), the double stochastic integral appearing in the construction of the Milstein scheme, is zero. Then, the adaptive Milstein scheme has the form
\begin{align}\label{eq:MilAdap}
    \hat{Y}_{t_{n+1}}^{i,N} &=  \hat{Y}_{t_{n}}^{i,N} + \frac{1}{N} \sum_{j=1}^N b(\hat{Y}_{t_n}^{i,N},\hat{Y}_{t_n}^{j,N})h_n^{\min} + \sigma(\hat{Y}_{t_{n}}^{i,N}) \Delta W_{t_n}^{i} \notag \\
    & \qquad+ \frac{1}{2} \sum_{j_1,j_2=1}^m L^{j_1} \sigma_{j_2}(\hat{Y}_{t_{n}}^{i,N}) (\Delta W_{t_n}^{i,j_1} \Delta W_{t_n}^{i,j_2}  -\delta_{j_1,j_2} h_n^{\min}),
\end{align}  
where $h_n^{\min}:=\min \lbrace h_n^{1}, \ldots, h_n^{N} \rbrace$ and $t_{n+1} = t_{n} + h^{\min}_n$. Analogously to the adaptive Euler-Maruyama scheme, one can also define the process $\tilde{X}^{i,N}$ with piecewise constant measure over time intervals of length $\delta T$,
as well as the continuous-time interpolants of $\hat{Y}^{i,N}$ and $\tilde{Y}^{i,N}$.

We give a result for the stability and strong convergence of the process $(\hat{Y}_t^{i,N})_{t \in [0,T]}$, which is the Milstein version of Scheme 1,
but do not pursue the analogue of Scheme 2 for simplicity.


\begin{theorem}\label{thm:Mil}
Let $p>0$ and $\xi \in L^k_0(\mathbb{R}^d)$, for a sufficiently large $k$ (depending on $p$). Let Assumptions (H.\ref{Assum:AxM}) and (H.\ref{Assum:AxAM1}) hold and the time-step function satisfies Assumptions (H.\ref{Assum:AxS}(\ref{Assum:AxS2}))--(H.\ref{Assum:AxS}(\ref{Assum:AxS3})). Then, there exists a constant $C>0$ such that for the solutions ${X}^{i,N}$ and $\hat{Y}^{i,N}$ of \eqref{mil_ip} and \eqref{eq:MilAdap},
\begin{equation*} 
\max_{i \in \lbrace 1, \ldots, N \rbrace} \mathbb{E} \left[ \sup_{t \in [0,T]} | \hat{Y}_t^{i,N} - X_t^{i,N} |^p \right] \leq C \delta^p. 
\end{equation*} 
\end{theorem}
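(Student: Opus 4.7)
The plan is to prove Theorem \ref{thm:Mil} in two stages: first, establish uniform-in-$\delta$ (and $N$) moment bounds for the adaptive Milstein scheme $(\hat{X}_t^{i,N})_{0 \leq t \leq T}$ up to a sufficiently high order; second, compare $\hat{X}^{i,N}$ to the particle system $X^{i,N}$ via an It\^o--Taylor expansion and close the resulting $L^p$-error estimate using It\^o's formula applied to $|\hat X^{i,N} - X^{i,N}|^p$ together with Gronwall's lemma. The rate $C\delta^p$ reflects the classical strong Milstein order $1$, with the step size bounded by $\delta T$ through (H.\ref{Assum:AS}(\ref{Assum:AS3})).

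For the stability step, I would expand $|\hat X_{t_{n+1}}^{i,N}|^2$ using \eqref{eq:MilAdap}. Writing $M_n^{i} := \tfrac{1}{2}\sum_{j_1,j_2=1}^{k} L^{j_1}\sigma_{j_2}(\hat X_{t_n}^{i,N})(\Delta W_{t_n}^{i,j_1}\Delta W_{t_n}^{i,j_2} - \delta_{j_1 j_2}h_n^{\min})$ for the Milstein correction, one has $\mathbb{E}[M_n^i \mid \mathcal{F}_{t_n}] = 0$, while the cross terms between $M_n^i$ and $\sigma(\hat X_{t_n}^{i,N})\Delta W_{t_n}^{i}$ vanish in conditional expectation because third-order moments of centred Gaussians are zero; moreover, $\mathbb{E}[|M_n^{i}|^2 \mid \mathcal{F}_{t_n}]$ is bounded by $C\sum_{j_1, j_2}|L^{j_1}\sigma_{j_2}(\hat X_{t_n}^{i,N})|^2 (h_n^{\min})^2$. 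Collecting these terms, the resulting one-step inequality is controlled by exactly the quantity appearing in the strengthened version of (H.\ref{Assum:AS}(\ref{Assum:AS2})) with factor $3/2$, yielding
\begin{equation*}
\mathbb{E}\!\left[|\hat X_{t_{n+1}}^{i,N}|^2 \,\big|\, \mathcal{F}_{t_n}\right] \leq (1 + C h_n^{\min})|\hat X_{t_n}^{i,N}|^2 + C h_n^{\min},
\end{equation*}
hence $\sup_n \mathbb{E}[|\hat X_{t_n}^{i,N}|^2] \leq C$ uniformly in $\delta$ and $N$. Higher $p$-th moments and the sup-inside-expectation version follow by a standard BDG argument on the continuous interpolant, paired with (H.\ref{Assum:AS}(\ref{Assum:AS1})) to bound $1/h^\delta$ by a polynomial in the state, in the spirit of Proposition \ref{prop:exp-nr-steps}.

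For the consistency step, apply It\^o's formula to $s \mapsto b(X_s^{i,N}, X_s^{j,N})$ and $s \mapsto \sigma(X_s^{i,N})$ on each subinterval $[t_n, t_{n+1})$, which is justified by the $C^1$ assumptions in (H.\ref{Assum:AM}(\ref{Assum:AM2})) and (H.\ref{Assum:AAM1}). The commutativity condition \eqref{eq_comm} annihilates the L\'evy-area term, so \eqref{mil_ip} at $t_{n+1}$ rewrites as the Milstein update \eqref{eq:MilAdap} applied along $X^{i,N}$ plus a remainder $R_n^{i,N}$ consisting of iterated It\^o integrals whose conditional $L^p$-norm is $O((h_n^{\min})^{3/2})$ and whose conditional mean is $O((h_n^{\min})^2)$, modulo polynomial factors in the particle states that are absorbed by the moment bounds above and the analogous bounds on $X^{i,N}$. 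For the error $e_t^{i,N} := \hat X_t^{i,N} - X_t^{i,N}$, It\^o's formula applied to $|e_t^{i,N}|^p$ produces a drift term which I split along the two arguments of $b$: the state argument is controlled via (H.\ref{Assum:AM}(\ref{Assum:AM3})) combined with the polynomial growth (H.\ref{Assum:AM}(\ref{Assum:AM4})) and the moment bounds, while the measure argument is handled by (H.\ref{Assum:AM}(\ref{Assum:AM5})) plus Jensen, producing a term $\tfrac{1}{N}\sum_j |e_t^{j,N}|^2$ that becomes symmetric in $i$ after taking $\max_i$. The diffusion and Milstein differences are Lipschitz by (H.\ref{Assum:AM}(\ref{Assum:AM1})) and (H.\ref{Assum:AM}(\ref{Assum:AM2})). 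BDG on the martingale parts and insertion of the consistency bound from above yield
\begin{equation*}
\max_{1 \leq i \leq N}\mathbb{E}\!\left[\sup_{s \leq t}|e_s^{i,N}|^p\right] \leq C\int_0^t \max_{1 \leq i \leq N}\mathbb{E}\!\left[\sup_{r \leq s}|e_r^{i,N}|^p\right] \mathrm{d}s + C\delta^p,
\end{equation*}
and Gronwall delivers the claim. The main technical obstacle is the consistency estimate for $R_n^{i,N}$ under super-linear growth of $b$: the It\^o remainder contains products of the form $|\partial_x b|\cdot|\sigma|$ and $|\partial_x b|\cdot|b|$, with combined polynomial growth of order roughly $q+r+1$ from (H.\ref{Assum:AM}(\ref{Assum:AM4})) and (H.\ref{Assum:AAM1}), which must be $L^p$-integrable uniformly in $n$. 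This is precisely where the adaptive step size is used a second time via (H.\ref{Assum:AS}(\ref{Assum:AS2})): in conjunction with the moment bounds, it makes factors such as $(h_n^{\min})^{1/2}|\partial_x b|(\hat X_{t_n}^{i,N})$ $L^p$-bounded, so that the per-step consistency remains of order $(h_n^{\min})^{3/2}$ and, after summation over at most $O(\delta^{-1})$ steps, gives the final rate $\delta^p$.
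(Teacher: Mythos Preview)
Your two-stage plan (moment stability, then convergence via an It\^o--Taylor/Gronwall argument) matches the paper's structure; in fact the paper only carries out the stability step in detail (Proposition~\ref{prop_milstab}) and defers the convergence to the analogous arguments in \cite{stock,K}, so your convergence sketch goes somewhat beyond what is spelled out there.

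There is, however, a genuine gap in your stability argument: you never establish almost-sure attainability of $T$. For an adaptive scheme with state-dependent step size, nothing a priori prevents $\sum_n h_n^{\min} < T$ on a set of positive probability, and your conditional one-step recursion only bounds moments at the grid points that are actually reached. The paper resolves this by introducing a $K$-truncated scheme $\hat X^{K,i,N}$ (exactly as in the proof of Proposition~\ref{prop1}): the truncated process is bounded by $K$, so $h$ is bounded away from zero and $T$ is attained; one then proves moment bounds uniformly in $K$ and passes to the limit via Markov's inequality and monotone convergence. This step is not optional and is not recovered by your conditional-expectation argument. A related point: your stability sketch conflates two routes. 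If you take conditional expectations, the cross terms $\langle \hat X_{t_n}, M_n^i\rangle$, $\langle b\,h_n^{\min}, M_n^i\rangle$ and $\langle \sigma\Delta W, M_n^i\rangle$ \emph{all} vanish, so only the factor $1/2$ in (H.\ref{Assum:AS}(\ref{Assum:AS2})) would be needed for the one-step estimate; the strengthened factor $3/2$ enters precisely in the \emph{pathwise} inequality the paper uses (bounding the cross terms by Young's inequality, see Proposition~\ref{prop_milstab}), which is what one must sum, raise to the power $p/2$, take the supremum over, and then hit with BDG to obtain $\mathbb{E}[\sup_t|\hat X_t^{i,N}|^p]$. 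Your ``standard BDG argument on the continuous interpolant'' therefore has to go through this pathwise route, and that is where the $3/2$ genuinely matters. Finally, note that (H.\ref{Assum:AS}(\ref{Assum:AS1})) is not among the hypotheses of Theorem~\ref{thm:Mil}, so it cannot be invoked.
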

Recall that for the adaptive Milstein scheme Assumption (H.\ref{Assum:AxS}(\ref{Assum:AxS2})) has a different constant than for the adaptive Euler-Maruyama scheme as pointed out there. We will only prove moment boundedness of this scheme in Section \ref{proofMil},
as the rest of the strong convergence analysis then follows by analogous steps to those in the proof of the main results in \cite{stock,CH}.

\section{Examples and numerical illustration}
\label{SECNum}

In this section, we present some numerical examples for the adaptive schemes and compare them with tamed Euler-Maruyama and Milstein schemes (\cite{RES} and \cite{CH,stock}, respectively).

The first three tests are modifications of tests from the literature to illustrate certain features of the method: in Sections 
\ref{Example:FG} and \ref{Example:GL} by adding a mean-field term to SDEs with non-Lipschitz drift;
in Section \ref{Example:K} by adding a non-Lipschitz drift to a McKean-Vlasov SDE. In Section \ref{Example:NeuroFHN}, we apply the method to the FitzHugh-Nagumo model from neuroscience. Then, in Section \ref{sec:Mil}, we give a numerical illustration for the performance of our proposed adaptive Milstein scheme. 

The first two examples are initially formulated as McKean-Vlasov equations and then approximated by a particle system in the canonical way, while the remaining examples are directly formulated as particle systems. \\ \\
\noindent
\textbf{Tamed Euler-Maruyama scheme}  \\ 
\noindent
The tamed Euler-Maruyama scheme (see (\cite{RES}) for the particle system (\ref{eq:PS}) with $h= T/M$, where $M$ denotes the number of (uniform) time-steps, reads 
\begin{equation*} 
    X_{t_{n+1}}^{i,N,M} = X_{t_n}^{i,N,M} + \frac{b(t_n, X^{i,N,M}_{t_n}, \mu_{t_n}^{\boldsymbol{X}^{N,M}})}{1+M^{-\alpha} |b(t_n, X^{i,N,M}_{t_n}, \mu_{t_n}^{\boldsymbol{X}^{N,M}})|} h + \sigma(t_n, X^{i,N,M}_{t_n}, \mu_{t_n}^{\boldsymbol{X}^{N,M}}) \Delta W^{i}_{t_n},  \quad X_{0}^{i,N,M} = \xi^{i}, 
\end{equation*} 
for $i \in \lbrace 1, \ldots, N \rbrace$, where the $N$ Brownian motions are independent, $\alpha =1/2$ (to achieve strong convergence of order $1/2$) or $\alpha =1$ (to enable strong order $1$ for a constant diffusion term) and 
\begin{equation*}
\mu_{t_n}^{\boldsymbol{X}^{N,M}}(\mathrm{d}x) = \frac{1}{N} \sum_{j=1}^{N} \delta_{X_{t_n}^{j,N,M}}(\mathrm{d}x).
\end{equation*}

\noindent
\textbf{Tamed Milstein scheme} \\
\noindent 
The tamed Milstein scheme for (\ref{MCVlasov}) is a special case of the schemes in \cite{CH,stock} and 
has the form 
\begin{align}\label{TameMil}
    Y_{t_{n+1}}^{i,N,M} &=  Y_{t_{n}}^{i,N,M} +\frac{1}{N}\tilde{b}(Y_{t_n}^{i,N,M},Y_{t_n}^{j,N,M})h \nonumber \\ 
    & \quad + \sigma(Y_{t_{n}}^{i,N,M}) \Delta W_{t_n}^{i} + \frac{1}{2} \sum_{j_1,j_2=1}^k L^{j_1} \sigma_{j_2}(Y_{t_{n}}^{i,N,M}) (\Delta W_{t_n}^{i,j_1} \Delta W_{t_n}^{i,j_2}  -\delta_{j_1,j_2} h), \quad Y_{0}^{i,N,M} = \xi^{i},
\end{align} 
where $\delta_{j_1,j_2} =1$, for $j_1 =j_2$ and $\delta_{j_1,j_2}=0$, for $j_1 \neq j_2$, and
\begin{equation*} 
\tilde{b}(Y_t^{i,N,M},Y_t^{j,N,M}):= \frac{\sum_{j=1}^N b(Y_{t_n}^{i,N,M},Y_{t_n}^{j,N,M}) }{1+ h \left | \frac{1}{N}\sum_{j=1}^N b(Y_{t_n}^{i,N,M},Y_{t_n}^{j,N,M}) \right |}. 
\end{equation*} 

To illustrate the strong convergence behaviour in $h$ ($\delta$, respectively), we compute the root mean-square error (RMSE) by comparing the numerical solution at level $l$ (of the time-discretisation), denoted by $\hat{X}_T^{i,N,M_l}$ (which generically stands for a numerical approximation obtained by the tamed or adaptive schemes with $M=M_l$), with the solution at level $l-1$, at the final time $T$.   To be precise, we compute
\begin{equation*}
\sqrt{\frac{1}{N} \sum_{i=1}^{N} \left(\hat{X}_T^{i,N,M_l} - \hat{X}_T^{i,N,M_{l-1}} \right)^2},
\end{equation*}
where $M_l = \lceil 2^lT\rceil$ and 
the two particle systems at each level are generated by the same Brownian motions. 
For our numerical experiments, we set $T=1$.

\subsection{Example 1 -- the Fang and Giles test with added mean-field term} 
\label{Example:FG}

Consider the one-dimensional SDE
\begin{equation*} 
    \mathrm{d}X_t = \left(- \frac{X_t}{1-|X_t|^2} + \mathbb{E}[X_t] \right) \, \mathrm{d}t + \, \mathrm{d}W_t, \quad X_0 =0,
\end{equation*} 
adapted from \cite{FG} by including the term $\mathbb{E}[X_t]$. The tamed Euler-Maruyama scheme reads 
\begin{equation*} 
      X_{t_{n+1}}^{i,N,M} = X_{t_n}^{i,N,M} + \frac{b(t_n, X^{i,N,M}_{t_n},\mu_{t_n}^{\boldsymbol{X}^{N,M}})}{1+M^{-\alpha} |b(t_n, X^{i,N,M}_{t_n}, \mu_{t_n}^{\boldsymbol{X}^{N,M}})|} h + \Delta W^{i}_{t_n}, 
\end{equation*} 
where 
\begin{equation*} 
    b(t_n, X^{i,N,M}_{t_n},\mu_{t_n}^{\boldsymbol{X}^{N,M}})= -\frac{X^{i,N,M}_{t_n}}{1 - |X^{i,N,M}_{t_n} |^2} + \frac{1}{N} \sum_{j=1}^{N} X^{j,N,M}_{t_n}. 
\end{equation*}
As the diffusion is constant, one would expect a strong convergence order of $1$ for the tamed scheme. Hence, one would set $\alpha=1$ to achieve this rate. However, as we will illustrate later the choice $\alpha=1$ seems not to give any rate of convergence, while the stronger taming with $\alpha=1/2$ at least recovers the expected rate of order $1/2$.

Though this example does not satisfy the assumptions listed above, it was numerically shown in \cite{FG} for the corresponding SDE without the mean-field term that the adaptive Euler-Maruyama method with $h^{\delta}(x) = \delta(1 - |x|^2)$ outperformed the tamed Euler-Maruyama scheme and an implicit scheme. 

If a newly computed approximation $\hat{X}^{i,N}_{t_{n+1}}$ exceeds a predefined radius $r_{\text{max}} = 1 - 10^{-10}$, this value is replaced by $r_{\text{max}}$ and $(r_{\text{max}}/ | \hat{X}^{i,N}_{t_{n+1}} |) \hat{X}^{i,N}_{t_{n+1}}$ in the one- and multi-dimensional setting, respectively.
\begin{figure}[!h]
	\centering
  \includegraphics[width=0.4\textwidth]{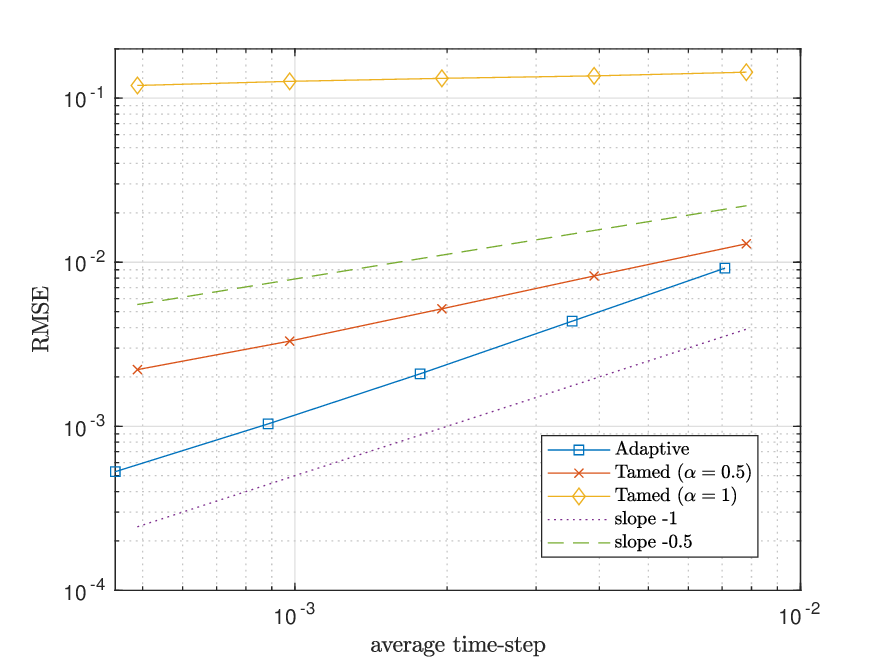}
	\caption{Strong convergence of the adaptive Euler-Maruyama time-discretisation applied to Example 1.}
	\label{fig1}
\end{figure} 

Fig.\ \ref{fig1} plots the root mean-square error (RMSE) against the average time-step size; this is to make the comparison of the tamed Euler-Maruyama scheme and the adaptive method fair.
Here, and for the following examples, ``average time-step'' refers to the harmonic mean of the step-size, i.e.\ we average the number of time-steps over all particles and then take the reciprocal. The number of particles was set to $N=10^4$.

Note that in Fig.\ \ref{fig1} results for the exponent $-1/2$ (i.e., $\alpha = 1/2$) are shown for the tamed scheme in addition to $\alpha = 1$ (chosen in  \cite{FG}), as the latter
yields an even worse convergence rate. Furthermore, it illustrates that the adaptive scheme achieves strong convergence order of $1$ (as the volatility coefficient is constant) in this example compared to the order $1/2$ for the tamed Euler-Maruyama method with $\alpha=1/2$. That we only observe convergence for $\alpha = 1/2$ in the case of tamed schemes, could be argued by the fact that the choice of $\alpha = 1/2$ results in a stronger taming. 

To justify the approximation of the true mean-field term by the arithmetic mean over all particles, we numerically investigate the weak and strong convergence in $N$ of the solution to the particle system, $X^{i,N}$ as in \eqref{eq:PS}, to the solution of the limit McKean-Vlasov SDE, $X^{i}$, as in \eqref{eq:sys}.

To be precise, for the weak convergence we study 
$
  \mathbb{E}[X^{i,N}] - \mathbb{E}[X^{i}],
$
which we expect to be of order $1/N$. To do so, we fix a fine time-grid and compute 
\begin{equation*} 
\text{error} := \left | \frac{1}{K}\sum_{j=1}^{K} \left(\frac{1}{N_l} \sum_{i=1}^{N_l} \hat{X}^{i,N_l,M,(j)}_T - \frac{1}{N_{l+1}} \sum_{i=1}^{N_{l+1}} \hat{X}^{i,N_{l+1},M,(j)}_T \right) \right |, 
\end{equation*} 
i.e., we compute the mean-field term at final time $T$ using two particle systems with a different number of particles (e.g., $N_l$ and $N_{l+1} =2N_{l}$) but using the same time-discretisation for both of them. This procedure will be repeated (independently) $K \in \mathbb{N}$ times (indicated by the superscript $j$) and we finally compute the sample mean of these $K$ realisations. Also, note that $N_l$ Brownian motions are used for the simulation of $\hat{X}^{i,N_l,M, (j)}$, out of the set of $N_{l+1}$ Brownian motions used for the simulation of $\hat{X}^{i,N_{l+1},M, (j)}$. This is done for several levels $l$, where a level refers here to the number of particles. 

For the strong pathwise propagation of chaos result, we compute for a fixed fine time-grid
\begin{equation*} 
\text{RMSE} := \sqrt{\frac{1}{N_l} \sum_{i=1}^{N_l} \left(\hat{X}^{i,N_l,M}_T - \tilde{\hat{X}}^{i,N_{l},M}_T \right)^2 }, 
\end{equation*}  
where the particle system $\tilde{\hat{X}}^{i,N_{l},M}$, $i \in \lbrace 1, \ldots, N_l \rbrace$, is obtained by splitting the set of Brownian motions driving the particle system $\hat{X}^{i,N_{l},M}$, $i \in \lbrace 1, \ldots, N_l \rbrace$, in two sets 
of size $N_l/2$, such that
$(\tilde{\hat{X}}^{i,N_{l},M,(1)})_{i \in \lbrace 1, \ldots, N_l/2 \rbrace}$ is a particle system obtained from the Brownian motions $(W^{i})_{i \in \lbrace 1, \ldots, N_l/2 \rbrace }$ and $(\tilde{\hat{X}}^{i,N_{l},M,(2)})_{i \in \lbrace N_l/2+1, \ldots, N_l \rbrace}$ from $(W^{i})_{i \in \lbrace N_l/2+1,\ldots, N_l \rbrace}$.
In particular, for these two smaller particle systems only $N_l/2$ particles are used to approximate the mean-field term. For our numerical test, we set $M=2^{8}$.
\begin{figure}[!h]
\centering
\begin{subfigure}[b]{0.4\textwidth}
\includegraphics[width=\textwidth]{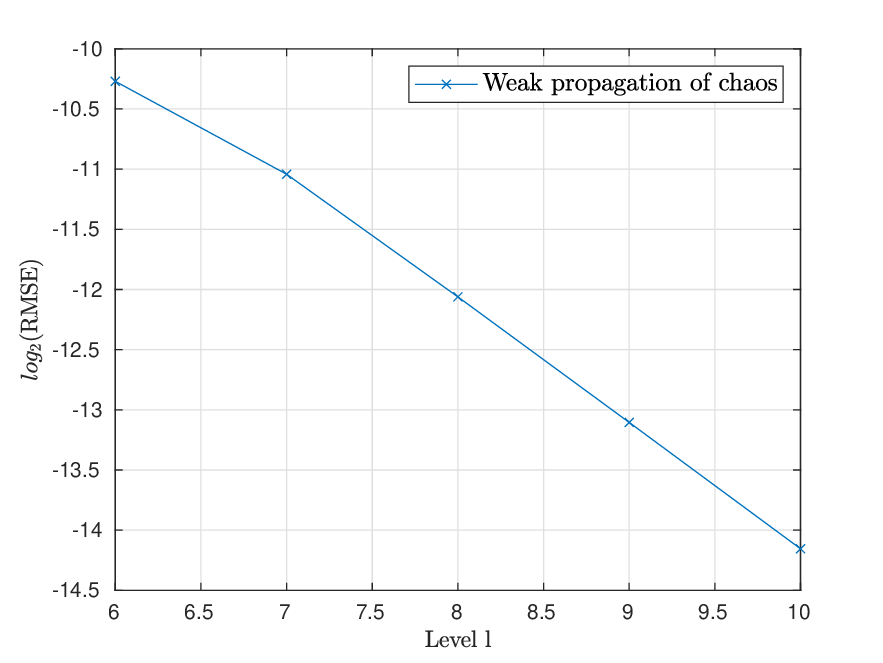}
\caption{}
\label{figWeakPOC}
\end{subfigure}%
\begin{subfigure}[b]{0.4\textwidth}
\includegraphics[width=\textwidth]{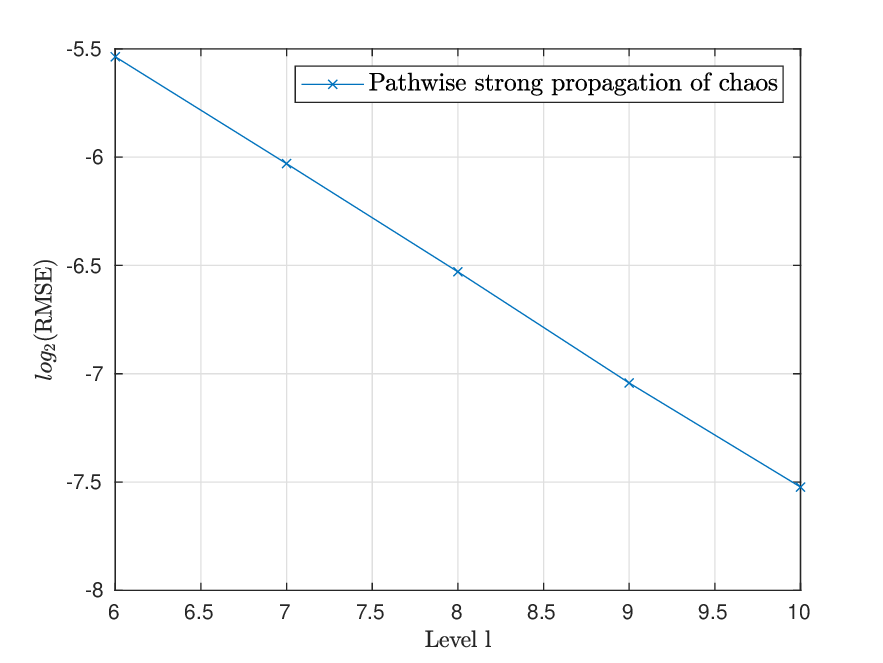}
\caption{}
\label{fig:StrongPOC}
\end{subfigure}
\caption{Example 1: Weak (left) and strong (right) convergence with respect to the number of particles.}
\label{fig:POC}
\end{figure}

We observe in Fig.~\ref{fig:POC} a weak and strong convergence rate of order roughly $1$ (in accordance with the result in \cite{TCS}) and roughly $1/2$, respectively. Recall that \cite[Proposition 3.1]{RES} establishes a {strong} convergence rate (in terms of number of particles) of order $1/4$ for dimensions $d < 4$ only. However, one would expect to have strong convergence of order $1/2$ in $N$, which was indeed proven in, e.g., \cite{DLR}, under a more restrictive set-up than here. The convergence rate in terms of the number of particles can also be improved to $1/2$ if the drift has the special form given as in equation \eqref{MCVlasov}, see \cite{MEL}. 

This illustration is only given for Example 1. For the remaining examples we use the same way of approximating the mean-field term and the results would be similar.

\subsection{Example 2 -- Ginzburg-Landau with added mean-field term}
\label{Example:GL}

As a second example, we consider the one-dimensional McKean-Vlasov SDE 
\begin{equation*} 
    \mathrm{d}X_t = \left( \frac{\sigma^2}{2}X_t -X^3_t + c \mathbb{E}[X_t] \right) \, \mathrm{d}t + \sigma X_t  \, \mathrm{d}W_t, \quad X_0 =1, 
\end{equation*} 
which is a variation of the stochastic Ginzburg-Landau equation (see \cite{DT}) with the addition of a mean-field term. This classical, non-globally Lipschitz SDE satisfies all assumptions listed above (except  (H.\ref{Assum:Ax}(\ref{Assum:Ax6})), which is not needed for the well-posedness) and therefore has a unique strong solution. 


Here, we choose $h^{\delta}(x) = \delta \min(1, |x |^{-2})$,
which can be motivated as follows: as the drift is mainly influenced by the $-X^3_t$ term, the choice $h(x) =  \min(1, |x |^{-2})$ yields $\left \langle  x, b(t,x,\mu) \right \rangle + \frac{1}{2} h(x) | b(t,x,\mu) |^2 < 0$, for large $x$. It is then easy to verify that Assumption (H.\ref{Assum:AxS})
is satisfied for $a=b=1$, $c=2$, and sufficiently large $L_c$ and $L_d$ depending on the model parameters.

In all tests, we take the parameters $\sigma = 1.5$ and $c=0.5$, and $N=10^4$ particles. We set $\alpha=1$ in the tamed scheme. Although, the choice $\alpha=0.5$ would be sufficient to achieve a strong convergence order of $1/2$, we observed that the performance of the tamed scheme with this taming was significantly worse than with $\alpha=1$.   

The strong convergence rate depicted in Fig.\ \ref{fig:test} is roughly of order $1/2$ for both the adaptive and tamed schemes, but the absolute error is smaller by a factor of roughly 10 for the adaptive scheme. 

In addition, we present in Fig.\ \ref{figHISTO} a histogram for $N^{i}_T$, $i \in \lbrace 1, \ldots, N \rbrace$, i.e., the number of adaptive time-steps required for all particles,
for $\delta=2^{-5}$ and $\delta = 2^{-7}$, with bin-width $2^3$ and $2^5$, respectively. The average number of time-steps is approximately $55$ for $\delta=2^{-5}=1/32$ and $218$ for $\delta=2^{-7}=1/128$, consistent with $\mathbb{E}[M_T] = \mathcal{O}(\delta^{-1})$.  
\begin{figure}[H]
\centering
\begin{subfigure}[b]{0.4\textwidth}
\includegraphics[width=\textwidth]{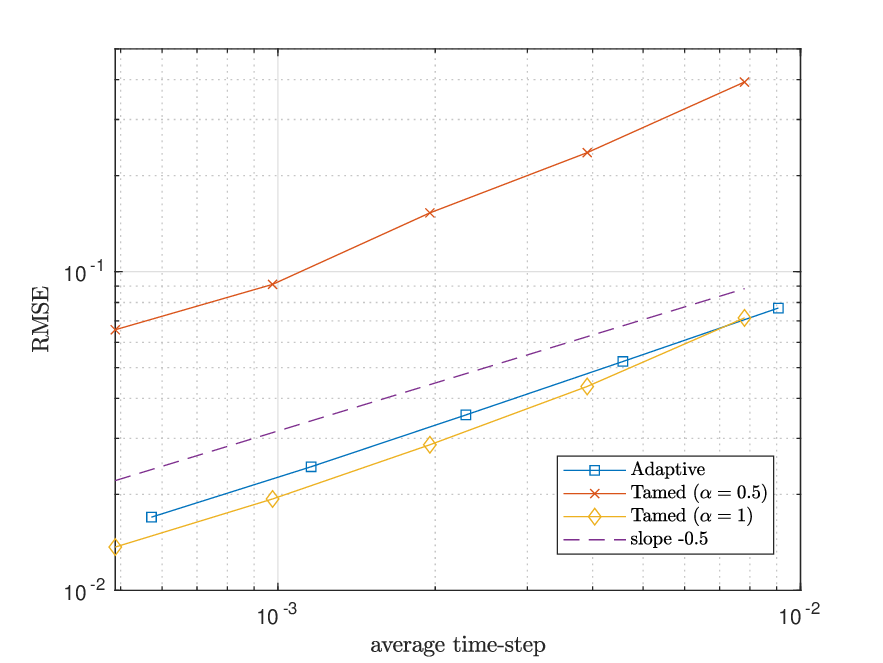}
\caption{}
\label{fig:test}
\end{subfigure}%
\begin{subfigure}[b]{0.4\textwidth}
\includegraphics[width=\textwidth]{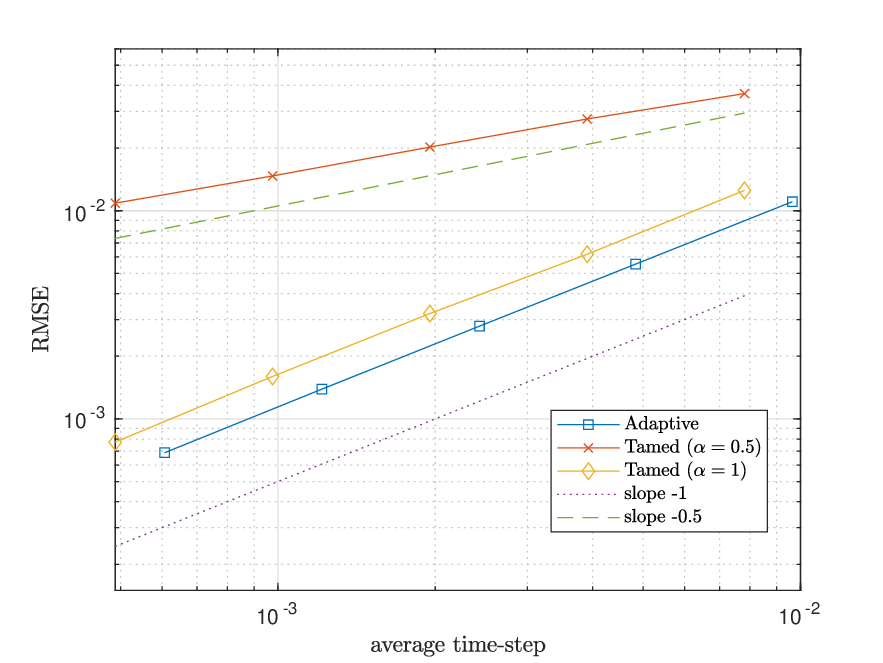}
\caption{}
\label{fig3}
\end{subfigure}
\begin{subfigure}[b]{0.4\textwidth}
\includegraphics[width=\textwidth]{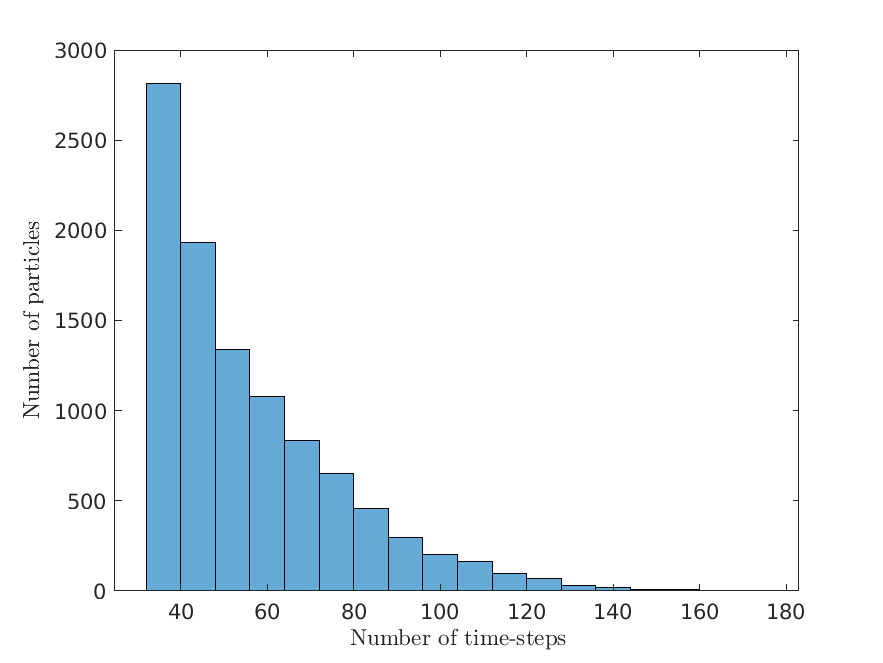}
\caption{}
\label{figHISTO2}
\end{subfigure}
\begin{subfigure}[b]{0.4\textwidth}
\includegraphics[width=\textwidth]{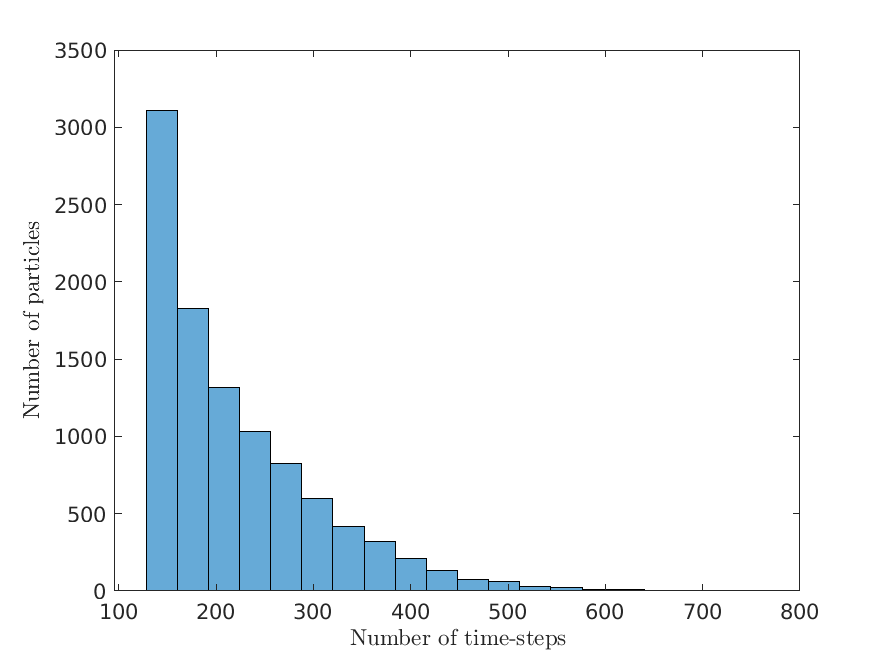}
\caption{}
\label{figHISTO}
\end{subfigure}
\caption{Strong convergence for (a) Example 2 and (b) Example 3. A histogram for the number of time-steps in case of Example 2 for (c) $\delta=2^{-5}$  and (d) $\delta=2^{-7}$.}
\end{figure}


\subsection{Example 3 -- Kuramoto model with added non-Lipschitz drift}
\label{Example:K}

Next, we consider a modification of the one-dimensional Kuramoto model (see \cite{ABVRS}) of the form
\begin{equation*}
    \mathrm{d}X_t^{i,N} = \left( \eta^{i} + X_t^{i,N} -(X_t^{i,N})^3 + \frac{1}{N} \sum_{j=1}^N \sin (X_t^{i,N} - X_t^{j,N}) \right) \, \mathrm{d}t + \sigma \, \mathrm{d}W_t^{i}, \quad X_0^{i,N} = \xi^{i},
\end{equation*}
where $\sigma \in \mathbb{R}$ is a constant and $\eta^{i}$ are i.~i.~d.\ random variables and independent from the set of i.~i.~d.\ random variables $\xi^{i}$ and the Brownian motions $W^i$, for $i \in \lbrace 1, \ldots, N \rbrace$. We set $\sigma=1$, $\xi^{i} \sim \mathcal{U}(0.5,1)$ and $\eta^{i} \sim \mathcal{N}(0,1)$. The term $X_t -X_t^3$, which does not appear in the original model, was added to illustrate the effect of a super-linear  drift term. As in the previous example, we choose $h^{\delta}(x) = \delta \min(1, |x |^{-2})$.

Our numerical tests shown in Fig.~\ref{fig:test} verify the strong convergence of order 1 for the adaptive scheme, which
is due to the constant volatility term in this example. It is only in this special case that we find the adaptive and tamed schemes to behave similarly. The number of particles was set to $N=10^3$.

To achieve a strong convergence rate of 1 for the tamed Euler-Maruyama method, the taming exponent $\alpha$ has to be chosen as 1. For the choice $\alpha = 1/2$ suggested in \cite{RES, SA}, one only achieves the expected strong order of $1/2$. However, note that in the generality of the SDEs considered there, the aim in \cite{RES,SA} was only to prove strong convergence rates of order $1/2$.

\subsection{Example 4 -- FitzHugh-Nagumo model}
\label{Example:NeuroFHN}

As main application, we consider a standard model from neuroscience, the FitzHugh-Nagumo network (see, e.g., \cite{BFFT,BO2,mehri2020}). For $N$ neurons and $P$ different neuron populations, we denote for $i \in \lbrace 1, \ldots, N \rbrace$ by
$p(i) = \alpha$, $\alpha \in \lbrace 1, \ldots, P \rbrace$, the population the $i$-th particle belongs to. For the state vector $(X_t^{i,N})_{t \in [0,T]} =(V_t^{i,N}, w_t^{i,N}, y_t^{i,N})_{t \in [0,T]}$ of neuron $i$, we consider the $3$-dimensional SDE
\begin{align*} 
    \mathrm{d}X_t^{i,N} = f_{\alpha}(t,X_t^{i,N}) \, \mathrm{d}t & + g_{\alpha}(t, X_t^{i,N}) \, \begin{bmatrix} 
           \mathrm{d}W_t^{i} \\ 
           \mathrm{d}W_t^{i,y} 
         \end{bmatrix} \\
         &  + \sum_{\gamma=1}^P \frac{1}{N_\gamma} \sum_{j,p(j)=\gamma} \left( b_{\alpha \gamma}(X_t^{i,N},X_t^{j,N}) \, \mathrm{d}t +  \beta_{\alpha \gamma}(X_t^{i,N},X_t^{j,N}) \,\mathrm{d}W_t^{i,\gamma} \right), 
\end{align*} 
for $i \in \lbrace 1, \ldots, N \rbrace$ and where $N_{\gamma}$ denotes the number of neurons in population $\gamma$. For $\gamma, \alpha \in \lbrace 1, \ldots, P \rbrace$,
$I^{\alpha}(t) := I, \ \forall  t \in [0,T], \ \forall  \alpha,$
for some constant value $I$,
\begin{equation*} 
    f_{\alpha}(t, X_t^{i,N}) = \begin{bmatrix} 
           V_t^{i,N} - \frac{(V_t^{i,N})^3}{3} - w_t^{i,N} + I^{\alpha}(t) \\ 
           c_{\alpha}(V_t^{i,N} + a_{\alpha} - b_{\alpha} w_t^{i,N}) \\ 
           a_r^{\alpha} S_{\alpha}(V_t^{i,N})(1-y_t^{i,N}) - a_d^{\alpha} y_t^{i,N} 
         \end{bmatrix}, \quad
    g_{\alpha}(t, X_t^{i,N}) = \begin{bmatrix} 
           \sigma_{\text{ext}}^{\alpha} & 0 \\ 
           0 & 0 \\ 
          0 & \sigma_{\alpha}^{y}(V_t^{i,N},y_t^{i,N}) 
         \end{bmatrix}, 
\end{equation*} 
and 
\begin{equation*} 
    b_{\alpha \gamma}(X_t^{i,N}, X_t^{j,N}) = \begin{bmatrix} 
           - \bar{J}_{\alpha \gamma}(V_t^{i,N} - V_{\text{rev}}^{\alpha \gamma})y_t^{j,N} \\ 
           0 \\ 
           0 
         \end{bmatrix}, \quad
   \beta_{\alpha \gamma}(X_t^{i,N}, X_t^{j,N}) = \begin{bmatrix} 
           - \sigma_{\alpha \gamma}^{J}(V_t^{i,N} - V_{\text{rev}}^{\alpha \gamma})y_t^{j,N} \\ 
           0 \\ 
           0 
         \end{bmatrix}. 
\end{equation*} 
Moreover, 
\begin{align*}
& S_{\alpha}(V_t^{i,N}) = \frac{T_{\max}^{\alpha}}{1 + e^{- \lambda_{\alpha}(V_t^{i,N} -V_T^{i,N})}},  \qquad  \qquad \qquad  \qquad
\chi(y_t^{i,N}) = \mathbb{I}_{y_t^{i,N} \in (0,1)} \Gamma e^{- \Lambda /(1-(2y_t^{i,N}-1)^2)}, \\
& \sigma_{\alpha}^{y}(V_t^{i,N},y_t^{i,N}) = \sqrt{a_r^{\alpha} S_{\alpha}(V_t^{i,N})(1-y_t^{i,N}) + a_d^{\gamma}y_t^{i,N}}\chi(y_t^{i,N}),
\end{align*}
and the standard one-dimensional Brownian motions $(W^{i},W^{i,y},W^{i,\gamma})$ are mutually independent for all $i \in \lbrace 1, \ldots, N \rbrace$.
For our numerical tests, we use the parameter values listed in \cite{RES} (with $\sigma_{\text{ext}}=0, \sigma^J=0.00002$ and $\sigma_{\text{ext}} =0.5, \sigma^J=0.2$, respectively), where the tamed Euler-Maruyama scheme was numerically tested, and also use the same type of random initial values presented there.

In our numerical experiments, we set $P=1$ and $N=10^3$. Note that the functions $f_{\alpha}$ and $g_{\alpha}$ are uniformly locally Lipschitz continuous with respect to the second variable. Moreover, the functions $\beta_{\alpha \gamma}$ and $b_{\alpha \gamma}$ are globally Lipschitz continuous
and satisfy the growth condition \eqref{eqn:growth}. 
These conditions give the existence of a unique solution to the above limit SDE and propagation of chaos results as shown in \cite{BO2}.

The function $\chi$ is modelled in a way such that it is a bounded Lipschitz function with compact support included in the interval $(0,1)$. Moreover, the value of $y^i$ such that $a_r^{\alpha} S_{\alpha}(V^i)(1-y^i) + a_d^{\gamma}y^i=0$, for $a_r^{\alpha} S_{\alpha}(V^i), a_d^{\gamma} >0$, never belongs to $[0,1]$. Thus, the square-root term in $\sigma_{\alpha}^{y}$ is not zero for $y^i \in [0,1]$. In addition, \cite[Proposition 3.3]{BO2} proves that for $0 \leq y_0^{i} \leq 1$ a.\ s., also $\mathbb{P} \left( \forall t \geq 0, \ 0 \leq y_t^{i} \leq 1 \right) = 1$.
 For the numerical approximation with the Euler-Maruyama scheme, there is a positive probability that $y$ becomes negative. We address this by an adaptation of the full-truncation Euler-Maruyama scheme, i.e., by simply setting this expression to zero, which has been shown to have good stability and accuracy for square-root diffusions
 (see, e.g., \cite{LKD, CR, CR2, CMR}, noting that they consider the more challenging case where the process can get arbitrarily close to the singularity).
 
For the adaptive time-step, we choose 
\begin{equation*} 
     h^{\delta}(x) = \delta \min(T, \gamma | x|^2 / | f_{\alpha}(t,x) |^2), 
\end{equation*} 
for some parameter $\gamma >0$. To motivate this choice, we note that for large $|X_t^{i}|$, $ f_{\alpha}(t, X_t^{i})$ is the driving part of the drift term and therefore we essentially achieve by this choice that $h(x) | b(t,x,\mu) | \approx c | x|^2$ for $h(x)= h^{\delta}(x)/\delta$. 

The strong convergence rates of the tamed and adaptive schemes are depicted for two different volatilities in Fig.\ \ref{fig4} and Fig.\ \ref{fig4a}. We can deduce from Fig.\ \ref{fig4} that the adaptive scheme has a practical strong convergence rate of order 1, as the components of the diffusion matrix are either zero or close to zero. The rate is asymptotically 1/2 for larger volatilities (Fig.\ \ref{fig4a}).
\begin{figure}[!h]
\centering
\begin{subfigure}[b]{0.4\textwidth}
\includegraphics[width=\textwidth]{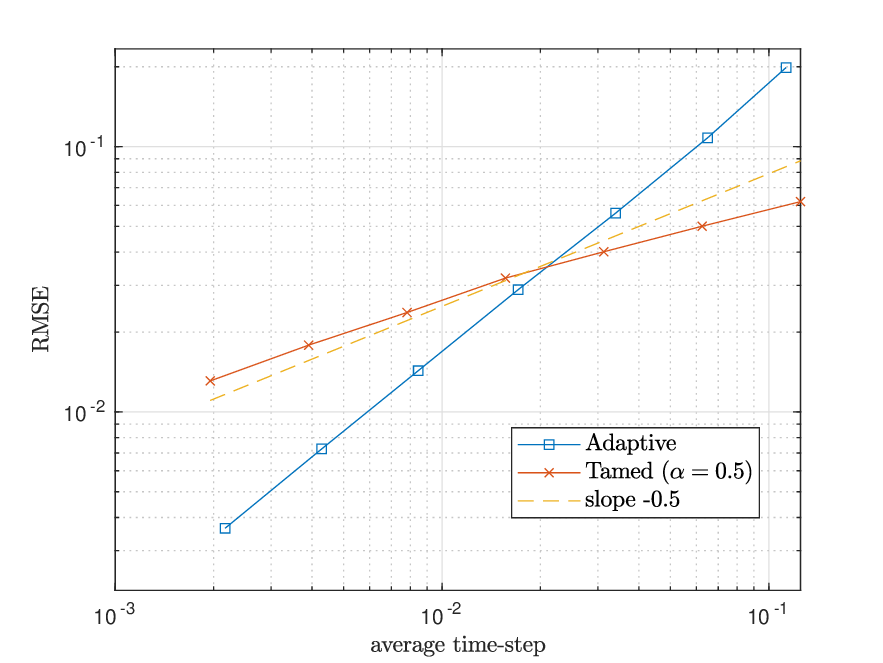}
\caption{$\sigma_{\text{ext}}=0, \sigma^J=0.00002$}
\label{fig4}
\end{subfigure}%
\begin{subfigure}[b]{0.4\textwidth}
\includegraphics[width=\textwidth]{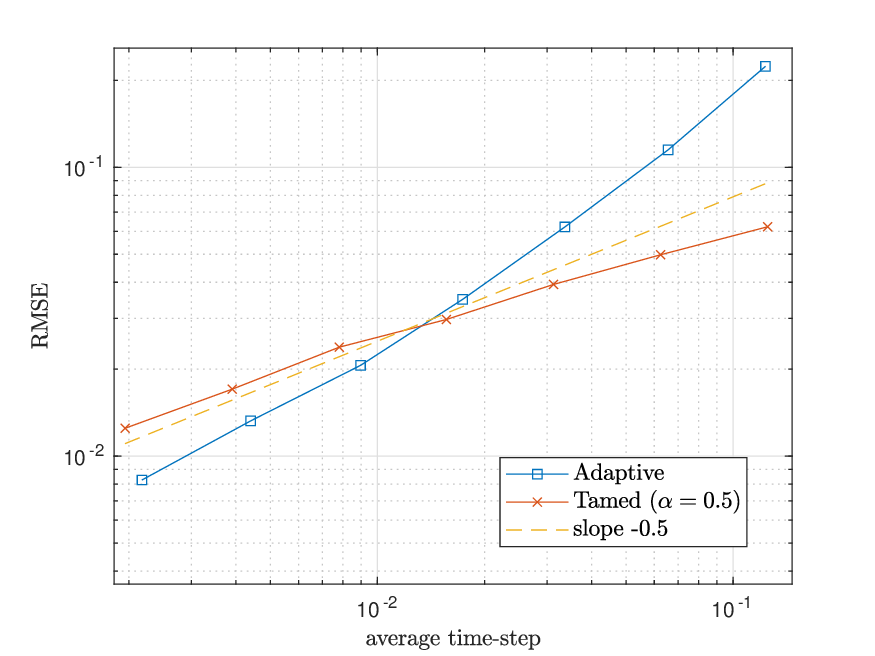}
\caption{$\sigma_{\text{ext}} =0.5, \sigma^J=0.2$}
\label{fig4a}
\end{subfigure}
\caption{Strong convergence for Example 4.}
\end{figure}

To additionally illustrate the importance of using a tamed or an adaptive time-stepping scheme for this model instead of a standard Euler-Maruyama scheme, we demonstrate numerically in Fig.\ \ref{fig:ParticleDivergence} that this scheme yields approximations which start to strongly oscillate and potentially diverge after a finite time (``particle corruption'' from \cite{RES}). For this test, we used $M=2^3$ and $N=10^3$.   
\begin{figure}[!h]
\centering
\begin{subfigure}[b]{0.4\textwidth}
\includegraphics[width=\textwidth]{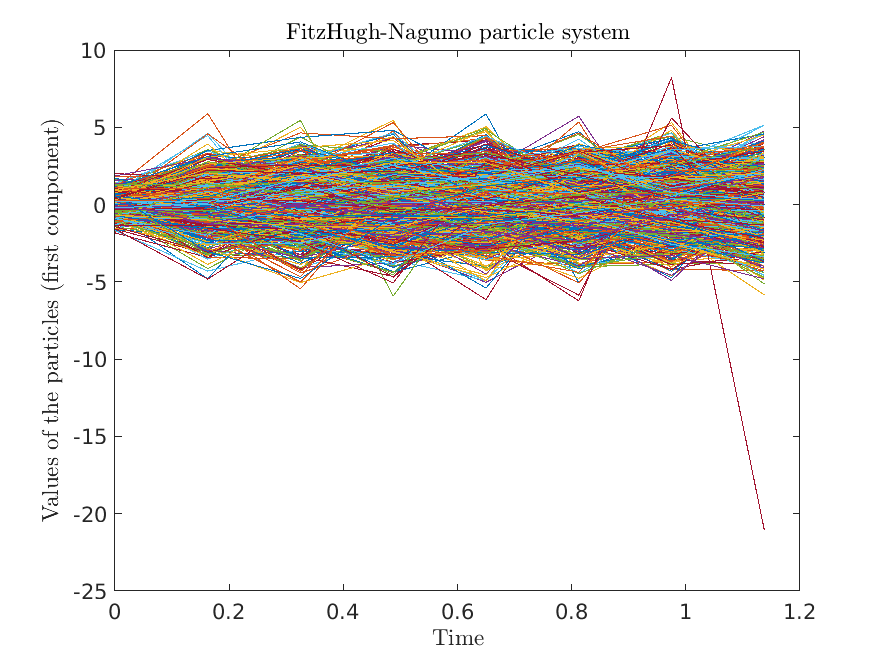}
\caption{}
\label{fig:ParticleDivergence}
\end{subfigure}%
\begin{subfigure}[b]{0.4\textwidth}
\includegraphics[width=\textwidth]{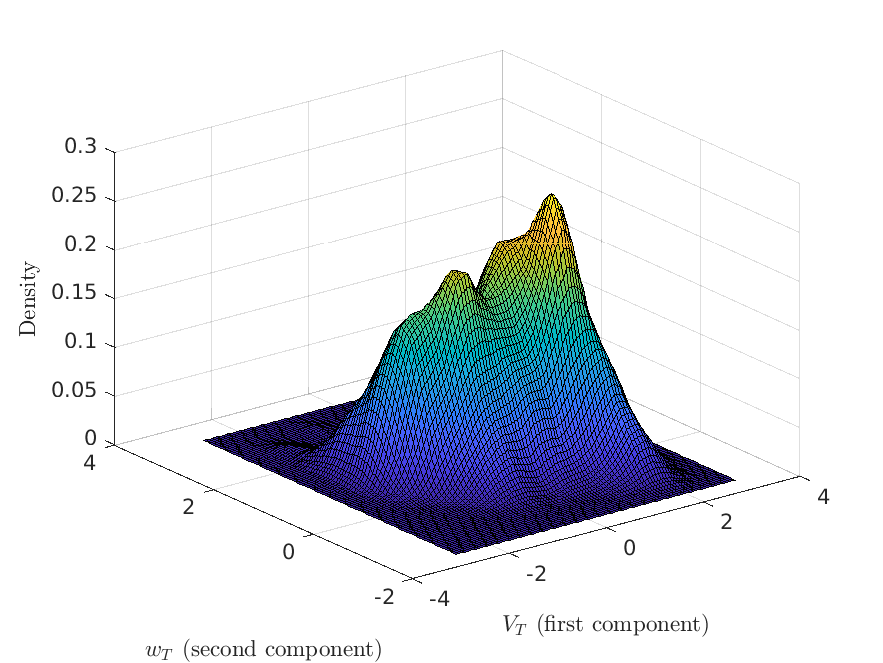}
\caption{}
\label{fig:testdensity}
\end{subfigure}
\caption{
Simulations of the FitzHugh-Nagumo network.
(a) ``Particle corruption'' (in the first component) for a standard Euler-Maruyama scheme.
(b) Approximation of the joint density of $(V_T, w_T)$, i.e., the first two components.
}
\end{figure}

Our  scheme can be used to approximate marginal densities of the above limit equation derived from the FitzHugh-Nagumo network. Fig.\ \ref{fig:testdensity} was obtained by the adaptive time-stepping method with $\delta = 1/2^{8}$, $T=1$ and $N=2 \cdot 10^3$. We used a kernel density approach to obtain the density from the simulated data. 

\subsection{Example 5 -- Milstein scheme}\label{sec:Mil}

To illustrate the performance of the adaptive Milstein scheme, we consider the following particle system 
\begin{align*}
\mathrm{d} X_t^{i,N} = \Bigg( -(X_t^{i,N})^5 + \frac{1}{N} \sum_{j=1}^{N} X_t^{j,N}  \Bigg) \, \mathrm{d}t + X_t^{i,N} \, \mathrm{d}W_t^{i}, \quad X_0^{i,N} =1,
\end{align*}
and use as benchmark the tamed Milstein scheme 
\begin{align*} 
    Y_{t_{n+1}}^{i,N,M} =  Y_{t_{n}}^{i,N,M} + \frac{- (Y_{t_n}^{i,N,M})^5 + \frac{1}{N}\sum_{j=1}^N Y_{t_n}^{j,N,M}}{1+h \left | - (Y_{t_n}^{i,N,M})^5 + \frac{1}{N} \sum_{j=1}^N Y_{t_n}^{j,N,M} \right |}h  + Y_{t_{n}}^{i,N,M} \Delta W_{t_n}^{i} + \frac{1}{2} Y_{t_{n}}^{i,N,M} ((\Delta W_{t_n}^i)^2 -h), 
\end{align*} 
where $Y^{i,N,M}_0 =1$, for all $i \in \lbrace 1, \ldots, N \rbrace$. 
Fig.\ \ref{figMilstein} illustrates the predicted strong convergence of order $1$ for both the tamed and adaptive Milstein scheme. For the adaptive time-step function, we choose $h^{\delta}(x)= \delta \min(1,| x|^{-4})$. The number of particles was set to $N=10^4$.
\begin{figure}[H]
	\centering
  \includegraphics[width=0.4\textwidth]{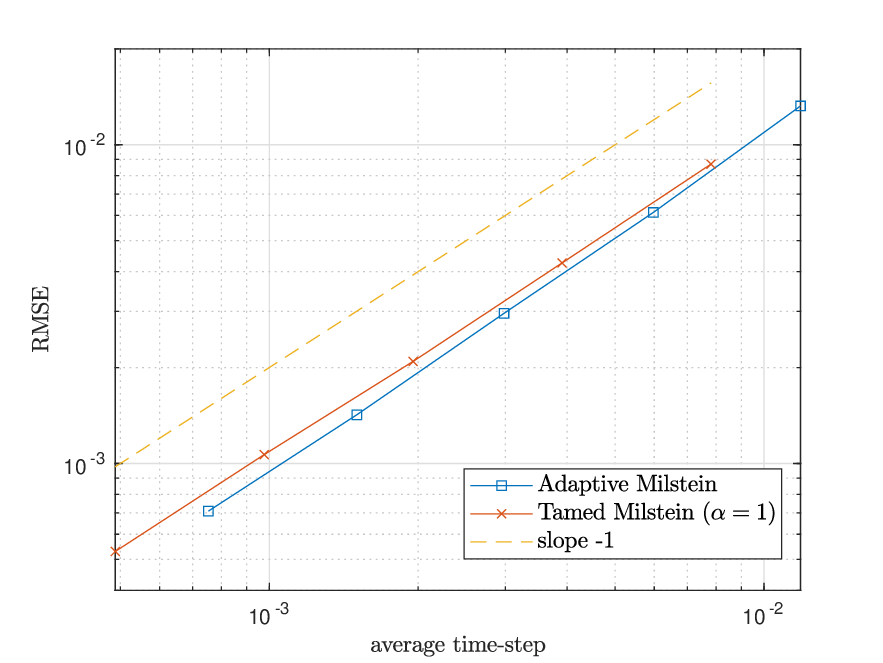}
	\caption{Strong convergence of the Milstein schemes.}
	\label{figMilstein}
\end{figure} 

\section{Proofs of main results}
\label{SECPro}

We start by giving stability 
results for the processes $\hat{X}^{i,N}$ and $\tilde{X}^{i,N}$ defined in (\ref{AdaptiveMcKean}) and (\ref{eq:TDPS}), respectively, for $i \in \lbrace 1, \ldots, N \rbrace$.
Note that generic constants used in the proofs only depend on $p, T$, the moment bounds for the initial data and the constants appearing in the assumptions on $b$, $\sigma$ and $h$, but are independent of $N$ and $\delta$.

\subsection{Moment stability for super-linear drift coefficient} 

In this section, our main result on stability is first proven for $p \geq 4$. Then, H\"{o}lder's inequality allows to deduce the claim for $ 0 < p < 4$.

\begin{prop}\label{prop1}
Let $p>0$ and $\xi \in L^p_0(\mathbb{R}^d)$. Let Assumption (H.\ref{Assum:Ax}) hold, and suppose the time-step function satisfies (H.\ref{Assum:AxS}(\ref{Assum:AxS2})). Then, $T$ is almost surely attainable, i.e. 
$\mathbb{P}(\exists \ M(\omega) < \infty, \text{ s.t. } t_{M(\omega)} \geq T)=1$.
Moreover, there exists a constant $C>0$ such that 
\begin{equation*}
    \max_{i \in \lbrace 1, \ldots, N \rbrace} \mathbb{E}\left[\sup_{t \in [0,T]} |  \hat{X}_{t}^{i,N} |^p \right] \ \lor \ \max_{i \in \lbrace 1, \ldots, N \rbrace} \mathbb{E}\left[\sup_{t \in [0,T]} |  \tilde{X}_{t}^{i,N} |^p \right]  \leq C.
\end{equation*}
\end{prop}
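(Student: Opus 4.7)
The plan is to first establish the $L^p$ moment bound at the discrete grid points, extend it to the continuous interpolant, and then deduce a.s.\ attainability of $T$ as a corollary. I would localise by the stopping times $\tau_R^i := \inf\{t_n : |\tilde{X}_{t_n}^{i,N}| \geq R\}$, carry out all estimates on $\{t_n \leq \tau_R^i\}$, and pass $R \to \infty$ by Fatou. The case $p \geq 4$ is treated first (so polynomial expansions of $|x+\Delta|^p$ are convenient) and $0 < p < 4$ is then recovered from H\"older's inequality.

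The central one-step calculation is the expansion of $|\tilde{X}_{t_{n+1}}^{i,N}|^2$. Writing $b_n, \sigma_n$ for the coefficients evaluated at $(t_n, \tilde{X}_{t_n}^{i,N}, \tilde{\mu}_{k_n\delta T}^N)$, the pair of deterministic drift contributions is
\begin{equation*}
2\langle \tilde{X}_{t_n}^{i,N}, b_n\rangle h_n^i + |b_n|^2 (h_n^i)^2.
\end{equation*}
The mechanism by which the adaptive step controls the super-linear drift is the elementary bound $(h_n^i)^2 \leq h_n^i \cdot h(\tilde{X}_{t_n}^{i,N})$, immediate from (H.\ref{Assum:AS}(\ref{Assum:AS3})); substituting it and applying (H.\ref{Assum:AS}(\ref{Assum:AS2})) collapses the above to $2h_n^i(L_c|\tilde{X}_{t_n}^{i,N}|^2 + L_d)$. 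The stochastic cross-terms vanish in conditional expectation, and the diffusion contribution $\mathbb{E}[|\sigma_n \Delta W_{t_n}^i|^2 \mid \mathcal{F}_{t_n}] = h_n^i\|\sigma_n\|^2$ is controlled by (H.\ref{Assum:A}(\ref{Assum:A1})) and (H.\ref{Assum:A}(\ref{Assum:A6})) via $C h_n^i(1 + |\tilde{X}_{t_n}^{i,N}|^2 + \mathcal{W}_2^2(\tilde{\mu}_{k_n\delta T}^N, \delta_0))$, with the Wasserstein term in turn bounded by $\frac{1}{N}\sum_j |\tilde{X}_{k_n\delta T}^{j,N}|^2$. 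Taking the maximum over $i$ absorbs the measure feedback into the same quantity being bounded, and a discrete Gronwall iteration using $\sum_n h_n^i \leq T$ yields a uniform bound on $\max_i \max_n \mathbb{E}[|\tilde{X}_{t_n}^{i,N}|^2]$.

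For general $p \geq 4$, the analogous expansion of $|x + \Delta_n|^p$ produces drift-squared terms $|\tilde{X}_{t_n}^{i,N}|^{p-2}|b_n|^2 (h_n^i)^2$ that again collapse via $(h_n^i)^2 \leq h_n^i h(\tilde{X}_{t_n}^{i,N})$, together with higher-order Brownian contributions of order $\mathcal{O}(h_n^i)$ after conditional moments $\mathbb{E}[|\Delta W_{t_n}^i|^{2k}] = \mathcal{O}((h_n^i)^k)$. Passing to the continuous interpolant is then routine: on each sub-interval $(\tilde{X}_t^{i,N})$ solves a constant-coefficient drifted Brownian SDE, so BDG and Doob upgrade the discrete bound to $\mathbb{E}[\sup_{0 \leq t \leq T}|\tilde{X}_t^{i,N}|^p]$. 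The same argument, with common step $h_n^{\min}$ and measure $\hat{\mu}_{t_n}^N$, yields the bound for $(\hat{X}_t^{i,N})$ simultaneously.

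Attainability is then immediate: once the moment bound is in hand, $\sup_{0 \leq t \leq T}|\tilde{X}_t^{i,N}| < \infty$ a.s., and on this event (H.\ref{Assum:AS}(\ref{Assum:AS1})) forces $h^\delta(\tilde{X}_{t_n}^{i,N})$ to be bounded below by a positive random constant, so only finitely many steps fit into $[0,T]$; the statement for Scheme 1 follows analogously since $h_n^{\min}$ is bounded below by the pointwise minimum. The principal obstacle is reconciling the per-particle adaptive grids in Scheme 2 with the piecewise-constant measure $\tilde{\mu}_{k_n\delta T}^N$: the step sizes $h_n^i$ differ across $i$, and the measure term couples all particles, so the estimates must be closed by taking $\max_i$ throughout the Gronwall step rather than reasoning one particle at a time; Scheme 1, whose common step $h_n^{\min}$ synchronises the grids, is comparatively straightforward.
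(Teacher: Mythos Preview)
Your one-step expansion and the use of $(h_n^i)^2 \leq h_n^i\,h(\tilde X_{t_n}^{i,N})$ together with (H.\ref{Assum:AS}(\ref{Assum:AS2})) is exactly the right mechanism, and the localisation by stopping times is a legitimate alternative to the paper's $K$-truncation $P_K(Y)=\min(1,K/|Y|)Y$. But there is a genuine gap in the passage from the discrete bound to the pathwise supremum bound.

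By taking conditional expectations you kill the martingale cross-term and, after a discrete Gronwall, obtain only $\sup_n \mathbb{E}\big[|\tilde X_{t_n}^{i,N}|^p\big]\le C$. Your proposed ``upgrade'' via BDG/Doob on each sub-interval controls $\mathbb{E}\big[\sup_{t_n\le t\le t_{n+1}}|\tilde X_t^{i,N}|^p\big]$ for each fixed $n$, but not $\mathbb{E}\big[\sup_n\sup_{t_n\le t\le t_{n+1}}|\tilde X_t^{i,N}|^p\big]$: the number of sub-intervals is random and, for the localised process with $|\cdot|\le R$, only bounded by $T/\inf_{|x|\le R}h^\delta(x)$, which blows up with $R$. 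Nor can you recover $\mathbb{E}[\sup_t|\cdot|^p]$ by writing $\tilde X_t=X_0+\int_0^t b\,\mathrm{d}s+\int_0^t\sigma\,\mathrm{d}W$ and applying BDG globally: the drift integral requires $\mathbb{E}\big[(\int_0^T|b|\,\mathrm{d}s)^p\big]$, and since $|b|$ grows like $|x|^{q+1}$ this needs moments of order $p(q+1)$, which you do not have under $X_0\in L^p_0$.

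The paper avoids this by \emph{not} dropping the martingale term. It sums the one-step bound on $|\hat X_{t_{n+1}}^{K,i,N}|^2$ over $n$, so that the cross-terms assemble into a single stochastic integral $\int_0^{\underline t}\langle\phi(\bar X_s,\bar\mu_s),\sigma(\cdot)\,\mathrm{d}W_s\rangle$, raises the resulting inequality to the power $p/2$ via Jensen, takes $\sup_t$ and $\mathbb{E}$, and only then applies BDG to the accumulated martingale. The key point is that (H.\ref{Assum:AS}(\ref{Assum:AS2})) gives $|\phi(x,\mu)|^2=|x+h_n^{\min}b|^2\le (1+2L_cT)|x|^2+2L_dT$, so the BDG integrand $|\phi|^2\|\sigma\|^2$ is at most quadratic in $|\bar X_s|$; this is what allows Gronwall to close on $\mathbb{E}[\sup_{s\le t}|\hat X_s^{K,i,N}|^p]$ using only $p$-th moments. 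If you rework your argument to keep the martingale and follow this route, the rest of your outline (localisation, Fatou, attainability) goes through.

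A minor point: you invoke (H.\ref{Assum:AS}(\ref{Assum:AS1})) for the lower bound on $h^\delta$ in the attainability step, but the proposition assumes only (H.\ref{Assum:AS}(\ref{Assum:AS2})). The paper instead uses the continuity and strict positivity of $h$ from (H.\ref{Assum:AS}(\ref{Assum:AS2})) to get $\inf_{|x|\le K}h(x)>0$ on compacts, which is all that is needed.
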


\begin{proof}
We follow a strategy similar to that in \cite[Theorem 1]{FG} for non-Lipschitz standard SDEs and adapt the necessary steps for the measure dependence. Let $K \geq 1$ be an integer, then we define a
$K$-truncated process by $X_0^{K,i,N}:= P_K(X_0^{i})$ and
\begin{equation}\label{eq:eq1}
  \hat{X}_{t_{n +1}}^{K,i,N} =  P_K(\hat{X}_{t_n}^{K,i,N} +  b(t_n, \hat{X}_{t_n}^{K,i,N}, \mu_{t_n}^{\hat{\boldsymbol{X}}^N}) h_n^{\min} +\sigma(t_n, \hat{X}_{t_n}^{K,i,N}, \mu_{t_n}^{\hat{\boldsymbol{X}}^N}) \Delta W_{t_n}^i),
\end{equation}
where, $\mu_{t_n}^{\hat{\boldsymbol{X}}^N}$ is the empirical measure (defined as above in (\ref{eq:empMeasure}), but using the $N$ truncated processes $\hat{X}_{t_n}^{K,i,N}$),
$P_K(Y) := \min(1,K/ | Y |)Y$, and consequently $|  \hat{X}_{t_{n}}^{K,i,N}  | \leq K$, for all $i \in \lbrace 1, \ldots, N \rbrace$ and $n \geq 0$. The continuous time interpolation for $\hat{X}_t^{K,i,N}$ is defined analogously to above (see (\ref{AdaptiveMcKean})).

The reason for introducing $\hat{X}^{K,i,N}$ is that one can guarantee that the time-step function has a strictly positive lower bound, i.e., $\inf_{| x| \leq K} h(x) >0$. Hence, $T$ is attainable. Showing that the moments of the $K$-truncated processes are bounded by some constant $C>0$ independent of $K$ allows to send $K$ to infinity and the result will then follow in combination with the monotone convergence theorem. 

The following analysis is performed for a fixed particle.
We get, using (\ref{eq:eq1}), 
\begin{align}\label{eq:eq2}
| \hat{X}_{t_{n +1}}^{K,i,N} |^2 &\leq  | \hat{X}_{t_{n}}^{K,i,N} |^2 + 2 h_n^{\min} \left( \left \langle \hat{X}_{t_{n}}^{K,i,N}, b(t_n, \hat{X}_{t_n}^{K,i,N}, \mu_{t_n}^{\hat{\boldsymbol{X}}^N}) \right \rangle  + \frac{1}{2} h_n^{\min} | b(t_n, \hat{X}_{t_n}^{K,i,N}, \mu_{t_n}^{\hat{\boldsymbol{X}}^N}) |^2 \right ) \nonumber \\
& \quad +2 \left \langle \phi(\hat{X}_{t_n}^{K,i,N},\mu_{t_n}^{\hat{\boldsymbol{X}}^N}) , \sigma(t_n, \hat{X}_{t_n}^{K,i,N}, \mu_{t_n}^{\hat{\boldsymbol{X}}^N}) \Delta W_{t_n}^i \right \rangle + | \sigma(t_n, \hat{X}_{t_n}^{K,i,N}, \mu_{t_n}^{\hat{\boldsymbol{X}}^N}) \Delta W_{t_n}^i |^2,
\end{align}
where 
\begin{equation*}
\phi(\hat{X}_{t_n}^{K,i,N},\mu_{t_n}^{\hat{\boldsymbol{X}}^N}) := \hat{X}_{t_n}^{K,i,N} + h_n^{\min} b(t_n, \hat{X}_{t_n}^{K,i,N},\mu_{t_n}^{\hat{\boldsymbol{X}}^N}).
\end{equation*}
Using Assumption (H.\ref{Assum:AxS}(\ref{Assum:AxS2})) for the time-step function, we get for (\ref{eq:eq2})
\begin{align}\label{eq:eq3}
    | \hat{X}_{t_{n +1}}^{K,i,N} |^2 &\leq  | \hat{X}_{t_{n}}^{K,i,N} |^2 + 2 L_c  | \hat{X}_{t_{n}}^{K,i,N} |^2 h_n^{\min}+ 2 L_d h_n^{\min} \nonumber \\
    & \quad +2 \left \langle \phi(\hat{X}_{t_n}^{K,i,N},\mu_{t_n}^{\hat{\boldsymbol{X}}^N}) , \sigma(t_n, \hat{X}_{t_n}^{K,i,N}, \mu_{t_n}^{\hat{\boldsymbol{X}}^N}) \Delta W_{t_n}^i \right \rangle + | \sigma(t_n, \hat{X}_{t_n}^{K,i,N},\mu_{t_n}^{\hat{\boldsymbol{X}}^N}) \Delta W_{t_n}^i |^2.
\end{align}
Summing (\ref{eq:eq3}) over multiple time-steps, adding the contribution from $\underline{t}$ to $t$ and then using Jensen's inequality, results in 
\allowdisplaybreaks
\begin{align}\label{eq:eqStab}
    | \hat{X}_{t}^{K,i,N} |^p &\leq C_p \Bigg[ | X_0^{K,i,N} |^p + \left( 2 L_c \int_0^{t} | \bar{X}_s^{K,i,N} |^2 \, \mathrm{d}s \right)^{p/2} + (2 t L_d)^{p/2} \nonumber \\
    & \qquad + \Big| 2 \int_{0}^{\underline{t}} \Big \langle \phi(\bar{X}_{s}^{K,i,N},\mu_{s}^{\bar{\boldsymbol{X}}^N}) , \sigma(\underline{s}, \bar{X}_{s}^{K,i,N}, \mu_{s}^{\bar{\boldsymbol{X}}^N}) \, \mathrm{d} W_s^{i} \Big \rangle \Big|^{p/2}  \nonumber \\
    & \qquad + \left( \sum_{k=0}^{n_t-1} |\sigma(t_k, \bar{X}_{t_k}^{K,i,N}, \mu_{t_k}^{\bar{\boldsymbol{X}}^N}) \Delta W_{t_k}^i  |^2 \right)^{p/2} \nonumber \\
    & \qquad + \Big| 2 \Big\langle \bar{X}_t^{K,i,N} + b(\underline{t},\bar{X}_{t}^{K,i,N},\mu_{t_n}^{\bar{\boldsymbol{X}}^N})(t - \underline{t}), \sigma(\underline{t}, \bar{X}_{t}^{K,i,N}, \mu_{t}^{\bar{\boldsymbol{X}}^N})(W_{t}^i -W_{\underline{t}}^i) \Big \rangle \Big|^{p/2}  \nonumber  \\ 
    & \qquad 
    + |\sigma(\underline{t}, \bar{X}_{t}^{K,i,N}, \mu_{t}^{\bar{\boldsymbol{X}}^N}) ( W_{t}^i -W_{\underline{t}}^i) |^p  \Bigg],
\end{align}
where we recall $n_t= \max \lbrace n: t_n \leq t \rbrace$ and $C_p>0$ is a constant depending on $p$ only. Taking the supremum (over $t$) and the expectation in (\ref{eq:eqStab}) gives
\begin{align*}
\mathbb{E} \left[ \sup_{t \in [0,T]} | \hat{X}_{t}^{K,i,N} |^p \right] \leq C_p \sum_{j=1}^{5} \Pi_j,
\end{align*}
where 
\begin{align*}
& \Pi_1 := \mathbb{E} \left[ | X_0^{K,i,N} |^p \right] + \mathbb{E} \left[ \left( 2 L_c \int_0^{t} | \bar{X}_s^{K,i,N} |^2 \, \mathrm{d}s \right)^{p/2} \right] + (2 t L_d)^{p/2}, \\
& \Pi_2 := \mathbb{E} \left[ \sup_{s \in [0,\underline{t}]} \Big| 2 \int_{0}^{s} \Big \langle \phi(\bar{X}_{u}^{K,i,N},\mu_{u}^{\bar{\boldsymbol{X}}^N}), \sigma(\underline{u}, \bar{X}_{u}^{K,i,N}, \mu_{u}^{\bar{\boldsymbol{X}}^N}) \, \mathrm{d} W_u^{i} \Big \rangle \Big|^{p/2} \right], \\
& \Pi_3:= \mathbb{E} \left[ \left( \sum_{k=0}^{n_t-1} |\sigma(t_k, \bar{X}_{t_k}^{K,i,N}, \mu_{t_k}^{\bar{\boldsymbol{X}}^N}) \Delta W_{t_k}^i  |^2 \right)^{p/2} \right], \\
& \Pi_4 := \mathbb{E} \left[ \sup_{s \in [0,t]} \Big| 2 \Big\langle \bar{X}_s^{K,i,N} + b(\underline{s},\bar{X}_{s}^{K,i,N},\mu_{s}^{\bar{\boldsymbol{X}}^N}) (s - \underline{s}),\sigma(\underline{s}, \bar{X}_{s}^{K,i,N},\mu_{s}^{\bar{\boldsymbol{X}}^N}) (W_{s}^i -W_{\underline{s}}^i) \Big \rangle \Big|^{p/2} \right], \\
& \Pi_5:= \mathbb{E} \left[ \sup_{s \in [0,t]} |\sigma(\underline{s}, \bar{X}_{s}^{K,i,N},\mu_{s}^{\bar{\boldsymbol{X}}^N}) (W_{s}^i -W_{\underline{s}}^i) |^p \right].
\end{align*}

Note that Assumption (H.\ref{Assum:AxS}(\ref{Assum:AxS2})) implies that 
\allowdisplaybreaks
\begin{align*}
   | \phi(\bar{X}_{s}^{K,i,N},\mu_{s}^{\bar{\boldsymbol{X}}^N})|^2 & = | \bar{X}_{s}^{K,i,N} + h_{n_s}^{\min} b(\underline{s},\bar{X}_{s}^{K,i,N}, \mu_{s}^{\bar{\boldsymbol{X}}^N}) |^2  \\ 
    & \leq | \bar{X}_{s}^{K,i,N} |^2 +2h_{n_s}^{\min}(L_c| \bar{X}_{s}^{K,i,N} |^2 +L_d)  \\
    & \leq (1 + 2L_cT) |\bar{X}_{s}^{K,i,N} |^2 + 2L_dT,
\end{align*}
and consequently
\begin{align}\label{est1}
     & | \phi(\bar{X}_{s}^{K,i,N},\mu_{s}^{\bar{\boldsymbol{X}}^N}) |^{p/2} \leq 2^{p/4 -1} \left((1+2L_cT)^{p/4} |\bar{X}_{s}^{K,i,N} |^{p/2} +(2TL_d)^{p/4} \right).
\end{align}
Further, we have that
\begin{align}\label{est2}
 | \bar{X}_t^{K,i,N} + b(\underline{t},\bar{X}_{t}^{K,i,N},\mu_{t}^{\bar{\boldsymbol{X}}^N})(t - \underline{t})|^{p/2}  \leq C_{p,T} \left( |\bar{X}_t^{K,i,N}|^{p/2} + 1 \right),
\end{align}
where we employed (H.\ref{Assum:AxS}(\ref{Assum:AxS2})) and $C_{p,T}> 0$ is a constant depending on $p$, $T$ and the constants appearing in the assumptions on the coefficient $b$ and in (H.\ref{Assum:AxS}(\ref{Assum:AxS2})).

Using this, we note that each of the terms $\Pi_1, \ldots, \Pi_5$ can be estimated similarly to the proof of \cite[Theorem 1]{FG}, employing additionally the bound
\begin{equation*}
\| \sigma(t,\bar{X}_{t}^{K,i,N}, \mu_{t}^{\bar{\boldsymbol{X}}^N}) \| \leq C\left(|\bar{X}_{t}^{K,i,N} | + 1 \right),
\end{equation*}
for $C > 0$ (depending on the Lipschitz constant for $\sigma$ and the constant appearing in  (H.\ref{Assum:Ax}(\ref{Assum:Ax6}))). To be precise, having estimates (\ref{est1}) and (\ref{est2}) in mind, along with the fact that the initial data has finite moments up to order $p$, each $\Pi_j$, $j \in \lbrace 1, \ldots, 5 \rbrace$, can be bounded by 
\begin{align*}
\Pi_j \leq C_{p,T} + C_{p,T} \int_{0}^{t} \mathbb{E} \left[ \sup_{u \in [0,s]} | \hat{X}_{u}^{K,i,N} |^p \right] \, \mathrm{d}s,
\end{align*} 
where $C_{p,T}>0$ is a constant independent of $K$, and $N$ and the particle index $i$. 
This allows to conclude the stability of the processes $\hat{X}^{i,N,K}$ using Gronwall's inequality. 

It remains to show the attainability of $T$ for the non-truncated particle system. The stability of $\hat{X}^{i,N}$ will then be a consequence of the monotone convergence theorem.
We have, for any $\omega \in \Omega$, $\max_{j \in \lbrace 1, \ldots, N \rbrace} \hat{X}_{t}^{K,j,N}(\omega) = \max_{j \in \lbrace 1, \ldots, N \rbrace} \hat{X}_{t}^{j,N}(\omega)$ for all $t \in [0,T]$ if and only if the inequality $\max_{j \in \lbrace 1, \ldots, N \rbrace} \sup_{t \in [0,T]} | \hat{X}_{t}^{j,N}(\omega) | \leq K$ holds. Employing Markov's inequality, we derive
\begin{align*}
\mathbb{P}\left(\max_{j \in \lbrace 1, \ldots, N \rbrace} \sup_{t \in [0,T]} | \hat{X}_{t}^{j,N} | <  K \right) & = \mathbb{P}\left( \max_{j \in \lbrace 1, \ldots, N \rbrace} \sup_{t \in [0,T]} | \hat{X}_{t}^{K,j,N} | <  K \right)  \\
& \leq 1 - N \frac{\mathbb{E} \left[ \sup_{t \in [0,T]} |\hat{X}_{t}^{K,i,N} |^4  \right] }{K^4}  \to 1,
\end{align*}
as $K \to \infty$, which implies that $\max_{j \in \lbrace 1, \ldots, N \rbrace} \sup_{t \in [0,T]} | \hat{X}_{t}^{j,N}(\omega)| < \infty$ and hence $T$ is attainable. The remaining arguments follow along the same lines as in \cite[Theorem 1, step 4]{FG}. The moment stability of  $\tilde{X}^{i,N}$ can be proven in a similar manner by using Assumption (H.\ref{Assum:Ax}(\ref{Assum:Ax6})).  
\end{proof}

\subsection{Convergence of Scheme 1} \label{Sec:AScheme} 

\begin{prop}\label{prop2}
Let $p>0$ and $X_0 \in L^k_0(\mathbb{R}^d)$, for a sufficiently large $k$ (depending on $p$). If the SDE satisfies Assumption (H.\ref{Assum:Ax}), and the time-step function satisfies (H.\ref{Assum:AxS}(\ref{Assum:AxS2}))--(H.\ref{Assum:AxS}(\ref{Assum:AxS3})), then there exists a constant $C>0$ such that 
\begin{equation*}
  \max_{i \in \lbrace 1, \ldots, N \rbrace} \mathbb{E}\left[ \sup_{t \in [0,T]} | \hat{X}_t^{i,N} - X_t^{i,N} |^p \right] \leq C \delta^{p/2},
\end{equation*}
where $\hat{X}^{i,N}$ and $X^{i,N}$ are defined in (\ref{AdaptiveMcKean}) and (\ref{eq:PS}), respectively. 
\end{prop}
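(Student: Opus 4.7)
My plan is to proceed by a standard strong convergence argument adapted to the adaptive/measure-dependent setting, doing the analysis first for even $p \ge 2$ and then extending to general $p>0$ by H\"older's inequality. Form the error process $e_t^{i,N}:=\hat X_t^{i,N}-X_t^{i,N}$, whose dynamics read
\begin{equation*}
\mathrm d e_t^{i,N} = \bigl[b(\underline t,\bar X_t^{i,N},\bar\mu_t^N)-b(t,X_t^{i,N},\mu_t^N)\bigr]\mathrm d t+\bigl[\sigma(\underline t,\bar X_t^{i,N},\bar\mu_t^N)-\sigma(t,X_t^{i,N},\mu_t^N)\bigr]\mathrm d W_t^i.
\end{equation*}
Apply It\^o's formula to $|e_t^{i,N}|^p$ and split each coefficient difference into four pieces by inserting the intermediate values $b(\underline t,\hat X_t^{i,N},\bar\mu_t^N)$, $b(\underline t,X_t^{i,N},\bar\mu_t^N)$, $b(\underline t,X_t^{i,N},\mu_t^N)$ (and analogously for $\sigma$). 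The four contributions are then estimated by: (i) the polynomial growth (H.\ref{Assum:A}(\ref{Assum:A4})) for the $\bar X\leftrightarrow\hat X$ state perturbation, (ii) the one-sided Lipschitz (H.\ref{Assum:A}(\ref{Assum:A2})) for the $\hat X\leftrightarrow X$ piece, which produces the $L_b|e_t^{i,N}|^2$ term driving Gronwall, (iii) Lipschitz dependence on the measure (H.\ref{Assum:A}(\ref{Assum:A3})) for the measure piece, and (iv) $1/2$-H\"older continuity in time (H.\ref{Assum:A}(\ref{Assum:A5})) for the piece $b(\underline t,\cdot,\cdot)-b(t,\cdot,\cdot)$, producing an $O(\delta^{p/2})$ term since $t-\underline t\le h^{\min}\le\delta T$. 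The diffusion is handled identically, using the global Lipschitz hypothesis (H.\ref{Assum:A}(\ref{Assum:A1})).

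The key auxiliary estimate is a one-step bound of the form
\begin{equation*}
\max_i\mathbb E\Bigl[\sup_{0\le t\le T}|\hat X_t^{i,N}-\bar X_t^{i,N}|^p\Bigr]\le C\delta^{p/2},
\end{equation*}
which I would derive from the representation $\hat X_t^{i,N}-\bar X_t^{i,N}=b(\underline t,\bar X_t^{i,N},\bar\mu_t^N)(t-\underline t)+\sigma(\underline t,\bar X_t^{i,N},\bar\mu_t^N)(W_t^i-W_{\underline t}^i)$ together with $t-\underline t\le\delta T$, the growth bounds on $b$ and $\sigma$ from (H.\ref{Assum:A}(\ref{Assum:A4}))--(H.\ref{Assum:A}(\ref{Assum:A6})), a BDG inequality applied interval by interval, and the moment stability of $\hat X$ from Proposition~\ref{prop1} (this is why $m$ must be taken large relative to $p$ and the polynomial degree $q$). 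For the measure component I use the standard empirical-distance bound $\mathcal W_2^2(\bar\mu_t^N,\mu_t^N)\le\frac1N\sum_{j=1}^N|\bar X_t^{j,N}-X_t^{j,N}|^2$ and then $|\bar X^{j,N}-X^{j,N}|^2\le2|\bar X^{j,N}-\hat X^{j,N}|^2+2|\hat X^{j,N}-X^{j,N}|^2$, which bounds the first summand by $C\delta$ and the second by $\max_j\mathbb E[\sup|\hat X^{j,N}-X^{j,N}|^2]$ after taking expectations.

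After taking suprema in $t$, applying BDG to the martingale term, H\"older/Young to absorb the small factors, and maximising over $i\in\{1,\ldots,N\}$, I arrive at an inequality of the form
\begin{equation*}
\max_i\mathbb E\Bigl[\sup_{0\le s\le t}|e_s^{i,N}|^p\Bigr]\le C\delta^{p/2}+C\int_0^t\max_i\mathbb E\Bigl[\sup_{0\le u\le s}|e_u^{i,N}|^p\Bigr]\mathrm d s,
\end{equation*}
whence Gronwall's lemma yields the claimed $O(\delta^{p/2})$ rate. The main technical obstacle is the polynomial-growth term $|b(\underline t,\bar X,\bar\mu)-b(\underline t,\hat X,\bar\mu)|\lesssim(1+|\bar X|^q+|\hat X|^q)|\bar X-\hat X|$: to move from a pointwise estimate to an $L^p$ estimate of order $\delta^{p/2}$ I apply H\"older's inequality with conjugate exponents so that the factor $(1+|\bar X|^q+|\hat X|^q)$ is absorbed by the high-order moment bound from Proposition~\ref{prop1} (requiring $m$ large), while the factor $|\bar X-\hat X|$ contributes the desired $\delta^{1/2}$. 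This is exactly where the measure dependence of the scheme would make things harder than in the classical SDE case, since the empirical-measure argument must be unfolded through all $N$ particles simultaneously; taking the maximum over $i$ before the Gronwall step removes this difficulty and gives a uniform-in-$N$ constant, as required.
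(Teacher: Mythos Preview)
Your proposal is correct and follows essentially the same route as the paper's proof: define the error $e_t^{(i)}=\hat X_t^{i,N}-X_t^{i,N}$, apply It\^o, telescope the drift and diffusion differences through intermediate values so that (H.\ref{Assum:A}(\ref{Assum:A2})), (H.\ref{Assum:A}(\ref{Assum:A4})), (H.\ref{Assum:A}(\ref{Assum:A3})) and (H.\ref{Assum:A}(\ref{Assum:A5})) each handle one piece, control the measure terms via the empirical $\mathcal W_2$ bound, use the one-step estimate $\mathbb E|\bar X-\hat X|^p\le C\delta^{p/2}$ (with H\"older against the moment bounds of Proposition~\ref{prop1} to absorb the polynomial factor), apply BDG to the martingale, and close with Gronwall. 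The only cosmetic difference is that the paper applies It\^o to $|e_t^{(i)}|^2$ and then raises to the $p/2$-th power via Jensen, whereas you apply It\^o to $|e_t^{(i)}|^p$ directly; and the paper splits the measure piece in two ($\bar\mu\to\hat\mu\to\mu$) rather than handling it in a single step as you do.
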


\begin{proof}
Define $e_t^{(i)}:=\hat{X}_t^{i,N} -X_t^{i,N}$, then we get
\begin{align*}
    \mathrm{d}e_t^{(i)} = \left( b(\underline{t}, \bar{X}_{t}^{i,N}, \mu_{t}^{\bar{\boldsymbol{X}}^N}) - b(t, X_{t}^{i,N}, \mu_{t}^{\boldsymbol{X}^N}) \right) \, \mathrm{d}t + \left( \sigma(\underline{t}, \bar{X}_{t}^{i,N}, \mu_{t}^{\bar{\boldsymbol{X}}^N}) -\sigma(t, X_{t}^{i,N}, \mu_{t}^{\boldsymbol{X}^N}) \right) \, \mathrm{d}W_t^i.
\end{align*}
It\^{o}'s formula implies
\begin{align*}
   | e_t^{(i)} |^2 &=  2  \int_{0}^t \left \langle e_s^{(i)}, b(\underline{s}, \bar{X}_{s}^{i,N}, \mu_{s}^{\bar{\boldsymbol{X}}^N}) - b(s, X_{s}^{i,N}, \mu_{s}^{\boldsymbol{X}^N}) \right \rangle \, \mathrm{d}s \\
  & \quad +   \int_{0}^t \| \sigma(\underline{s}, \bar{X}_{s}^{i,N}, \mu_{s}^{\bar{\boldsymbol{X}}^N}) -\sigma(s, X_{s}^{i,N}, \mu_{s}^{\boldsymbol{X}^N}) \|^2 \, \mathrm{d}s  \\
    & \quad 
    +2  \int_{0}^t \left \langle e_s^{(i)}, \left( \sigma(\underline{s}, \bar{X}_{s}^{i,N}, \mu_{s}^{\bar{\boldsymbol{X}}^N}) -\sigma(s, X_{s}^{i,N}, \mu_{s}^{\boldsymbol{X}^N}) \right) \, \mathrm{d}W_s^{i} \right \rangle.
\end{align*}
Note that 
\allowdisplaybreaks
\begin{align*}
    &\left \langle  e_s^{(i)}, b(\underline{s}, \bar{X}_{s}^{i,N}, \mu_{s}^{\bar{\boldsymbol{X}}^N}) - b(s, X_{s}^{i,N}, \mu_{s}^{\boldsymbol{X}^N}) \right  \rangle 
    = \left \langle e_s^{(i)}, b(\underline{s}, \bar{X}_{s}^{i,N}, \mu_{s}^{\bar{\boldsymbol{X}}^N}) -  b(s, \bar{X}_{s}^{i,N}, \mu_{s}^{\bar{\boldsymbol{X}}^N}) \right \rangle \\
    & \qquad\qquad\qquad + \left \langle e_s^{(i)}, b(s, \bar{X}_{s}^{i,N}, \mu_{s}^{\bar{\boldsymbol{X}}^N}) -  b(s, \hat{X}_{s}^{i,N}, \mu_{s}^{\bar{\boldsymbol{X}}^N}) \right \rangle
     + \left \langle e_s^{(i)}, b(s, \hat{X}_{s}^{i,N}, \mu_{s}^{\bar{\boldsymbol{X}}^N}) -  b(s, X_{s}^{i,N}, \mu_{s}^{\bar{\boldsymbol{X}}^N}) \right \rangle \\
    & \qquad\qquad\qquad +\left \langle e_s^{(i)}, b(s, X_{s}^{i,N}, \mu_{s}^{\bar{\boldsymbol{X}}^N}) -  b(s, X_{s}^{i,N}, \mu_{s}^{\hat{\boldsymbol{X}}^N}) \right \rangle
     +\left \langle e_s^{(i)}, b(s, X_{s}^{i,N}, \mu_{s}^{\hat{\boldsymbol{X}}^N}) -  b(s, X_{s}^{i,N}, \mu_{s}^{\boldsymbol{X}^N}) \right \rangle.
\end{align*}
We estimate each of these terms separately:
The first term can be estimated by time-H\"{o}lder continuity,
\begin{align*}
\left \langle e_s^{(i)}, b(\underline{s}, \bar{X}_{s}^{i,N}, \mu_{s}^{\bar{\boldsymbol{X}}^N}) -  b(s, \bar{X}_{s}^{i,N}, \mu_{s}^{\bar{\boldsymbol{X}}^N}) \right \rangle  \leq C |e_s^{(i)}| (s-\underline{s})^{1/2} \leq \frac{1}{2} | e_s^{(i)} |^2 + \frac{C^2}{2}(s-\underline{s}).
\end{align*}
For the second term, we get due to the polynomial growth condition on $b$
\begin{eqnarray*}
\left \langle e_s^{(i)}, b(s, \bar{X}_{s}^{i,N}, \mu_{s}^{\bar{\boldsymbol{X}}^N}) -  b(s, \hat{X}_{s}^{i,N}, \mu_{s}^{\bar{\boldsymbol{X}}^N}) \right \rangle
    &\leq& |e_s^{(i)} | \tilde{L}(\hat{X}_{s}^{i,N},\bar{X}_{s}^{i,N})  | \bar{X}_{s}^{i,N} - \hat{X}_{s}^{i,N} | \\
    &\leq& \frac{1}{2} | e_s^{(i)} |^2 + \frac{1}{2}\tilde{L}(\hat{X}_{s}^{i,N},\bar{X}_{s}^{i,N})^2  | \bar{X}_{s}^{i,N} - \hat{X}_{s}^{i,N} |^2,
\end{eqnarray*}
where 
    $\tilde{L}(x,y):=L(1 + |x |^q + | y |^q)$. 
For the third term, we have due to the one-sided Lipschitz condition
\begin{align*}
   & \left \langle e_s^{(i)}, b(s, \hat{X}_{s}^{i,N}, \mu_{s}^{\bar{\boldsymbol{X}}^N}) - b(s, X_{s}^{i,N},\mu_{s}^{\bar{\boldsymbol{X}}^N}) \right \rangle \leq L_b | e_s^{(i)} |^2.
\end{align*}
The fourth term can be estimated using bounds for the Wasserstein metric, i.e., we obtain 
\begin{align*}
\left \langle e_s^{(i)}, b(s, X_{s}^{i,N}, \mu_{s}^{\bar{\boldsymbol{X}}^N}) -  b(s, X_{s}^{i,N}, \mu_{s}^{\hat{\boldsymbol{X}}^N}) \right \rangle &\leq L | e_s^{(i)} | \mathcal{W}_2(\mu_{s}^{\bar{\boldsymbol{X}}^N} ,\mu_{s}^{\hat{\boldsymbol{X}}^N})  \\
    & \leq L | e_s^{(i)} | \frac{1}{\sqrt{N}} \left( \sum_{j=1}^N | \bar{X}_{s}^{j,N} - \hat{X}_{s}^{j,N}  |^2 \right)^{1/2} \\
    & \leq \frac{1}{2} | e_s^{(i)} |^2 + L^2\frac{1}{2N} \sum_{j=1}^N | \bar{X}_{s}^{j,N} - \hat{X}_{s}^{j,N}  |^2.
\end{align*}
Similarly, for the last term, we derive the following estimate:
\begin{align*}
    \left \langle e_s^{(i)}, b(s, X_{s}^{i,N}, \mu_{s}^{\hat{\boldsymbol{X}}^N}) -  b(s, X_{s}^{i,N}, \mu_{s}^{\boldsymbol{X}^N}) \right \rangle 
     \leq \frac{1}{2} | e_s^{(i)} |^2 + L^2 \frac{1}{2N} \sum_{j=1}^N | e_s^{(j)} |^2. 
\end{align*}
Further, due to the Lipschitz and time-H\"{o}lder continuity conditions on $\sigma$, we get 
\allowdisplaybreaks
\begin{align*}
   & \| \sigma(\underline{s}, \bar{X}_{s}^{i,N}, \mu_{s}^{\bar{\boldsymbol{X}}^N}) -\sigma(s, X_{s}^{i,N}, \mu_{s}^{\boldsymbol{X}^N}) \|^2 \\
   & \leq 2 \| \sigma(\underline{s}, \bar{X}_{s}^{i,N}, \mu_{s}^{\bar{\boldsymbol{X}}^N}) -\sigma(s, \bar{X}_{s}^{i,N}, \mu_{s}^{\bar{\boldsymbol{X}}^N}) \|^2 + 2 \| \sigma(s, \bar{X}_{s}^{i,N}, \mu_{s}^{\bar{\boldsymbol{X}}^N}) -\sigma(s, X_{s}^{i,N}, \mu_{s}^{\boldsymbol{X}^N}) \|^2 \\
   & \leq 2C^2(s - \underline{s})+ 2L^2 ( | \bar{X}_{s}^{i,N} - X_{s}^{i,N}  |^2 + \mathcal{W}_2(\mu_{s}^{\bar{\boldsymbol{X}}^N},  \mu_{s}^{\boldsymbol{X}^N}) )^2 \\
    & \leq 2C^2(s - \underline{s}) + 4L^2 | e_s^{(i)} |^2 + 4L^2 | \hat{X}_{s}^{i,N} - \bar{X}_{s}^{i,N} |^2 + 2L^2  \mathcal{W}_2^{2}(\mu_{s}^{\bar{\boldsymbol{X}}^N},  \mu_{s}^{\boldsymbol{X}^N}) \\
    & \leq 2C^2(s - \underline{s})+ 4L^2 | e_s^{(i)} |^2 + 4L^2 | \hat{X}_{s}^{i,N} - \bar{X}_{s}^{i,N} |^2 + 4L^2 \left(\mathcal{W}^{2}_2(\mu_{s}^{\bar{\boldsymbol{X}}^N},\mu_{s}^{\hat{\boldsymbol{X}}^N}) + \mathcal{W}^{2}_2(\mu_{s}^{\hat{\boldsymbol{X}}^N},\mu_{s}^{\boldsymbol{X}^N}) \right) \\
    & \leq 2C^2(s - \underline{s}) + 4L^2 | e_s^{(i)} |^2 + 4L^2 | \hat{X}_{s}^{i,N} - \bar{X}_{s}^{i,N} |^2 + 
     \frac{4L^2}{N} \Bigg( \sum_{j=1}^N | \bar{X}_{s}^{j,N} - \hat{X}_{s}^{j,N} |^2 +  \sum_{j=1}^N | \hat{X}_{s}^{j,N} - X_{s}^{j,N}  |^2 \Bigg).
\end{align*}

Hence, putting all these estimates from above together, we get for some constant $C_{L}>0$, depending only on the (different) Lipschitz constants,
\allowdisplaybreaks
\begin{align*}
    | e_t^{(i)} |^2 &\leq C_{L} \int_{0}^t |  e_s^{(i)} |^2 \, \mathrm{d}s + \int_{0}^t (4L^2 +\tilde{L}(\hat{X}_{s}^{i,N},\bar{X}_{s}^{i,N})^2) | \bar{X}_{s}^{i,N} -\hat{X}_{s}^{i,N} |^2 \, \mathrm{d}s \\
    & \quad + 2 \int_{0}^t \left \langle  e_s^{(i)}, \left( \sigma(\underline{s}, \bar{X}_{s}^{i,N}, \mu_{s}^{\bar{\boldsymbol{X}}^N}) -\sigma(s, X_{s}^{i,N}, \mu_{s}^{\boldsymbol{X}^N}) \right) \, \mathrm{d}W_s^{i} \right \rangle \\
    & \quad + C_{L} \frac{1}{N} \int_{0}^t \sum_{j=1}^N | \bar{X}_{s}^{j,N} - \hat{X}_{s}^{j,N} |^2 \, \mathrm{d}s 
    \; + \; C_{L} \frac{1}{N} \int_{0}^t \sum_{j=1}^N | e_s^{(j)} |^2 \, \mathrm{d}s + \left(2TC^2 + \frac{C^2}{2}T^2 \right) \delta,
\end{align*}
where we used $(s-\underline{s}) \leq T\delta$. Hence, using Jensen's inequality, we get, for some constant $C_{T,p,L}>0$, the estimate 
\allowdisplaybreaks
\begin{align*}
   | e_s^{(i)} |^p &\leq C_{T,p,L}  \int_{0}^t  | e_s^{(i)} |^p  \, \mathrm{d}s + C_{T,p,L} \left(\frac{1}{N}\right)^{p/2} \int_{0}^t \left(\sum_{j=1}^N | e_s^{(j)} |^2 \right)^{p/2} \, \mathrm{d}s  \\
    & \quad + C_{T,p,L} \int_{0}^t  (4L^2 +\tilde{L}(\hat{X}_{s}^{i,N},\bar{X}_{s}^{i,N})^2))^{p/2} | \bar{X}_{s}^{i,N} -\hat{X}_{s}^{i,N} |^p \, \mathrm{d}s \\
    & \quad + C_{T,p,L} \left| \int_{0}^t \left \langle e_s^{(i)}, \left( \sigma(\underline{s}, \bar{X}_{s}^{i,N}, \mu_{s}^{\bar{\boldsymbol{X}}^N}) -\sigma(s, X_{s}^{i,N}, \mu_{s}^{\boldsymbol{X}^N}) \right) \, \mathrm{d}W_s^{i} \right \rangle \right|^{p/2} \\
    & \quad + C_{T,p,L} \left(\frac{1}{N}\right)^{p/2} \int_{0}^t \left(\sum_{j=1}^N | \bar{X}_{s}^{j,N} - \hat{X}_{s}^{j,N} |^2 \right)^{p/2} \, \mathrm{d}s + C_{T,p,L} \delta^{p/2}.
\end{align*}
Note that 
\begin{align*}
\int_{0}^t \left(\sum_{j=1}^N | \bar{X}_{s}^{j,N} - \hat{X}_{s}^{j,N} |^2 \right)^{p/2} \, \mathrm{d}s \leq N^{p/2-1} \int_{0}^t \sum_{j=1}^N | \bar{X}_{s}^{j,N} - \hat{X}_{s}^{j,N} |^p  \, \mathrm{d}s,
\end{align*}
and similarly for the other expressions in the previous estimate. Consequently, taking the supremum and expectation on both sides, we arrive at
\allowdisplaybreaks
\begin{align}\label{strongerror}
      \mathbb{E} \left[  \sup_{s \in [0,t]} | e_s^{(i)} |^p \right] &\leq C_{T,p,L} \int_{0}^t \mathbb{E} \left [  \sup_{u \in [0,s]}  | e_u^{(i)} |^p  \right] \, \mathrm{d}s \nonumber + C_{T,p,L} \int_{0}^t \mathbb{E} \left[  (4L^2 +\tilde{L}(\hat{X}_{s}^{i,N},\bar{X}_{s}^{i,N})^2)^{p/2} | \bar{X}_{s}^{i,N} -\hat{X}_{s}^{i,N} |^p \right] \, \mathrm{d}s \nonumber \\
    & \quad + C_{T,p,L} \mathbb{E} \left[  \sup_{s \in [0,t]}  \left| \int_{0}^s \left \langle e_u^{(i)}, \left( \sigma(\underline{u}, \bar{X}_{u}^{i,N}, \mu_{u}^{\bar{\boldsymbol{X}}^N}) -\sigma(u, X_{u}^{i,N}, \mu_{u}^{\boldsymbol{X}^N}) \right) \, \mathrm{d}W_u^{i} \right \rangle \right|^{p/2} \right] \nonumber \\
    & \quad + C_{T,p,L} \int_{0}^t \mathbb{E} \left[  | \bar{X}_{s}^{i,N} - \hat{X}_{s}^{i,N} |^{p} \right] \, \mathrm{d}s   +  C_{T,p,L} \delta^{p/2}.
\end{align}

Applying the Burkholder-Davis-Gundy inequality in combination with Jensen's inequality and the global Lipschitz continuity of $\sigma$, we get for some constant $C_{T,p,L} > 0$. 
\allowdisplaybreaks
\begin{align*}
   & \mathbb{E} \left[   \sup_{s \in [0,t]} \left| \int_{0}^s \left \langle e_u^{(i)}, \left( \sigma(\underline{u}, \bar{X}_{u}^{i,N}, \mu_{u}^{\bar{\boldsymbol{X}}^N}) -\sigma(u, X_{u}^{i,N}, \mu_{u}^{\boldsymbol{X}^N}) \right) \, \mathrm{d}W_u^{i} \right \rangle \right|^{p/2} \right] \\
   & \leq C_{T,p,L} \mathbb{E} \left[  \int_{0}^{t} | e_s^{(i)} |^{p/2} \left( |\bar{X}_{s}^{i,N} -X_{s}^{i,N} | + \mathcal{W}_2(\mu_{s}^{\bar{\boldsymbol{X}}^N} , \mu_{s}^{\boldsymbol{X}^N}) + (T\delta)^{1/2} \right)^{p/2} \, \mathrm{d}s \right] \\
   & \leq C_{T,p,L} \mathbb{E} \left[ \int_{0}^{t}\frac{1}{2}  |  e_s^{(i)}  |^{p} + \frac{1}{2} \left( |\bar{X}_{s}^{i,N} -X_{s}^{i,N}   | + \mathcal{W}_2(\mu_{s}^{\bar{\boldsymbol{X}}^N} , \mu_{s}^{\boldsymbol{X}^N}) + (T\delta)^{1/2} \right)^{p} \, \mathrm{d}s \right] \\
   &\leq C_{T,p,L} \mathbb{E} \left[  \int_{0}^{t}  |  e_s^{(i)}  |^{p} +  | \bar{X}_{s}^{i,N} -\hat{X}_{s}^{i,N} |^p +  \mathcal{W}_2^{p}(\mu_{s}^{\bar{\boldsymbol{X}}^N} , \mu_{s}^{\boldsymbol{X}^N})  \, \mathrm{d}s  + (T \delta)^{p/2} \right] \\
   & \leq C_{T,p,L} \mathbb{E} \Big[  \int_{0}^{t}  \left(|  e_s^{(i)} |^{p} + | \hat{X}_{s}^{i,N} -\bar{X}_{s}^{i,N} |^p +  \mathcal{W}_2^{p}(\mu_{s}^{\bar{\boldsymbol{X}}^N} , \mu_{s}^{\hat{\boldsymbol{X}}^N}) +  \mathcal{W}_2^{p}(\mu_{s}^{\hat{\boldsymbol{X}}^N}, \mu_{s}^{\boldsymbol{X}^N}) \right) \, \mathrm{d}s + (T \delta)^{p/2} \Big].
\end{align*}
Further, recall that
\begin{align*}
    & \mathcal{W}_2^{p}(\mu_{s}^{\bar{\boldsymbol{X}}^N},\mu_{s}^{\hat{\boldsymbol{X}}^N}) \leq  \frac{1}{N} \sum_{j=1}^N | \bar{X}_{s}^{j,N} -\hat{X}_{s}^{j,N} |^p, \quad
    & \mathcal{W}_2^{p}(\mu_{s}^{\hat{\boldsymbol{X}}^N},\mu_{s}^{\boldsymbol{X}^N}) \leq  \frac{1}{N} \sum_{j=1}^N | e_s^{(j)} |^p.
\end{align*}
Hence, there exists a constant $C_{T,p,L} > 0$,
\begin{align*}
   & \mathbb{E} \left[   \sup_{s \in [0,t]} \left| \int_{0}^s \left \langle e_u^{(i)} , \left( \sigma(u, \bar{X}_{u}^{i,N}, \mu_{u}^{\bar{\boldsymbol{X}}^N}) -\sigma(u, X_{u}^{i,N}, \mu_{u}^{\boldsymbol{X}^N}) \right) \, \mathrm{d}W_u^{i} \right \rangle \right|^{p/2} \right] \\
   & \leq C_{T,p,L} \mathbb{E} \Bigg[ \int_{0}^{t}  |  e_s^{(i)}  |^{p} +  | \bar{X}_{s}^{i,N} -\hat{X}_{s}^{i,N} |^p  \, \mathrm{d}s  + (T \delta)^{p/2} \Bigg].
\end{align*}

The claim follows then from (\ref{strongerror}) using standard arguments, since for the error terms of the form $|  e_s^{(i)} |^{p}$ one can apply Gronwall's inequality and the expected values of the terms of the form $| \bar{X}_{s}^{i,N} -\hat{X}_{s}^{i,N} |^p$ are of order $\delta^{p/2}$; which can be shown using a conditional expectation argument for the Brownian increment and taking the growth of the coefficients along with Proposition \ref{prop1} into account.
\end{proof}

\subsection{Convergence of Scheme 2 (Proof of Theorem \ref{thm:main})}
\label{sec:main}

\begin{proof}
Let $X^{i,N}$ be given by \eqref{eq:PS},
for any $i \in \lbrace 1, \ldots, N \rbrace$.
It is shown in \cite[Proposition 3.1]{RES} that
$\max_{i \in \lbrace 1, \ldots, N \rbrace}  \mathbb{E}\left[ \sup_{t \in [0,T]} | X_t^{i} - {X}_t^{i,N} |^2 \right] \leq C \varphi(N)$
for some $C$ independent of $N$.
It therefore remains to show here that
 $\max_{i \in \lbrace 1, \ldots, N \rbrace}  \mathbb{E}\left[ \sup_{t \in [0,T]} | X_t^{i,N} - \tilde{X}_t^{i,N} |^2 \right] \leq C \delta$
for some $C$ independent of $N$ and $\delta$.
 
In what follows, we will drop for ease of notation the explicit time-dependence of the coefficients. Then, we may write for $t \in [t_n, t_{n+1}]$ for some $n \geq 1$,
\begin{align*}
|X_t^{i,N} - \tilde{X}_t^{i,N}|^2 &= \Big| X_{\underline{t}}^{i,N} - \tilde{X}_{\underline{t}}^{i,N} + \int_{\underline{t}}^{t} b(X_{s}^{i,N}, \mu_s^{\boldsymbol{X}^N}) \, \mathrm{d}s - b(\tilde{X}_{\underline{t}}^{i,N},\mu_{k_n\delta T}^{\tilde{\boldsymbol{X}}^N})(t-\underline{t}) \\
& \qquad \int_{\underline{t}}^{t} \sigma(X_{s}^{i,N}, \mu_s^{\boldsymbol{X}^N}) \, \mathrm{d}W^{i}_s - \sigma(\tilde{X}_{\underline{t}}^{i,N}, \mu_{k_n\delta T}^{\tilde{\boldsymbol{X}}^N})(W_t^{i}-W_{\underline{t}}^{i}) \Big|^2.
\end{align*}
Squaring the term on the right side and taking expectations yields
\begin{align}
\nonumber
 \mathbb{E}[|X_t^{i,N} - \tilde{X}_t^{i,N}|^2] &\leq  \mathbb{E}[| X_{\underline{t}}^{i,N} - \tilde{X}_{\underline{t}}^{i,N}|^2] + 2 \mathbb{E}\left[\left|\int_{\underline{t}}^{t} b(X_{s}^{i,N}, \mu_s^{\boldsymbol{X}^N}) \, \mathrm{d}s - b(\tilde{X}_{\underline{t}}^{i,N}, \mu_{k_n\delta T}^{\tilde{\boldsymbol{X}}^N})(t-\underline{t}) \right|^2 \right] \\
\nonumber
& \quad + 2\mathbb{E}\left[ \Big|\int_{\underline{t}}^{t} \sigma(X_{s}^{i,N}, \mu_s^{\boldsymbol{X}^N}) \, \mathrm{d}W^{i}_s - \sigma(\tilde{X}_{\underline{t}}^{i,N}, \mu_{k_n\delta T}^{\tilde{\boldsymbol{X}}^N})(W_t^{i}-W_{\underline{t}}^{i}) \Big|^2 \right] \\
& \quad + 2\mathbb{E} \left \langle X_{\underline{t}}^{i,N} - \tilde{X}_{\underline{t}}^{i,N} ,  \int_{\underline{t}}^{t} b(X_{s}^{i,N}, \mu_s^{\boldsymbol{X}^N}) \, \mathrm{d}s - b(\tilde{X}_{\underline{t}}^{i,N}, \mu_{k_n\delta T}^{\tilde{\boldsymbol{X}}^N})(t-\underline{t}) \right \rangle.
\label{last_term}
\end{align}
Now, observe that H\"{o}lder's inequality implies
\begin{align*}
& \hspace{-2 cm} \mathbb{E}\left[\left|\int_{\underline{t}}^{t} b(X_{s}^{i,N}, \mu_s^{\boldsymbol{X}^N}) \, \mathrm{d}s - b(\tilde{X}_{\underline{t}}^{i,N}, \mu_{k_n\delta T}^{\tilde{\boldsymbol{X}}^N})(t-\underline{t}) \right|^2 \right]  \\
&  \leq C \mathbb{E}\left[(t - \underline{t}) \int_{\underline{t}}^{t} |b(X_{s}^{i,N}, \mu_s^{\boldsymbol{X}^N})|^2 \, \mathrm{d}s \right] +C\mathbb{E}\left[ |b(\tilde{X}_{\underline{t}}^{i,N},\mu_{k_n\delta T}^{\tilde{\boldsymbol{X}}^N})|^2 |t-\underline{t}|^2  \right]  \leq C \delta^2,
\end{align*}
for some constant $C>0$, due to Assumptions (H.\ref{Assum:Ax}(\ref{Assum:Ax4})) and (H.\ref{Assum:Ax}(\ref{Assum:Ax6})) and the moment stability of $X^{i,N}$ and $\tilde{X}^{i,N}$. Also, recall that $t-\underline{t} \leq \delta T$.

Next, we bound
\begin{align*}
& \hspace{-1 cm} \mathbb{E}\left[\left|\int_{\underline{t}}^{t} \sigma(X_{s}^{i,N}, \mu_s^{\boldsymbol{X}^N}) \, \mathrm{d}W^{i}_s - \sigma(\tilde{X}_{\underline{t}}^{i,N}, \mu_{k_n\delta T}^{\tilde{\boldsymbol{X}}^N})(W_t^{i}-W_{\underline{t}}^{i})\right|^2 \right]
 \; \leq \; \sum_{i=1}^{3} \Pi_i, \qquad \text{where} \\
\Pi_1 &= \ 3 \mathbb{E}\left[\left|\int_{\underline{t}}^{t} \sigma(X_{s}^{i,N}, \mu_s^{\boldsymbol{X}^N}) \, \mathrm{d}W^{i}_s - \sigma(X_{\underline{t}}^{i,N},  \mu_{\underline{t}}^{\boldsymbol{X}^N})(W_t^{i}-W_{\underline{t}}^{i})  \right|^2 \right],  \\
\Pi_2 &= \ 3\mathbb{E}\left[\left|\sigma(X_{\underline{t}}^{i,N},  \mu_{\underline{t}}^{\boldsymbol{X}^N})(W_t^{i}-W_{\underline{t}}^{i}) -  \sigma(\tilde{X}_{\underline{t}}^{i,N}, \mu_{\underline{t}}^{\tilde{\boldsymbol{X}}^N})(W_t^{i}-W_{\underline{t}}^{i}) \right|^2 \right], \\
\Pi_3 &=  \ 3\mathbb{E}\left[\left| \sigma(\tilde{X}_{\underline{t}}^{i,N}, \mu_{\underline{t}}^{\tilde{\boldsymbol{X}}^N})(W_t^{i}-W_{\underline{t}}^{i})  - \sigma(\tilde{X}_{\underline{t}}^{i,N}, \mu_{k_n\delta T}^{\tilde{\boldsymbol{X}}^N})(W_t^{i}-W_{\underline{t}}^{i})  \right|^2 \right].
\end{align*}
By virtue of Assumption (H.\ref{Assum:Ax}(\ref{Assum:Ax1})), we get the estimate
\begin{align*}
& \| \sigma(X_{t}^{i,N}, \mu_t^{\boldsymbol{X}^N}) - \sigma(X_{\underline{t}}^{i,N},  \mu_{\underline{t}}^{\boldsymbol{X}^N}) \|^2 \leq C \left( |X_{t}^{i,N} - X_{\underline{t}}^{i,N}|^2 + \mathcal{W}_2^{2}( \mu_t^{N}, \mu_{\underline{t}}^{\boldsymbol{X}^N}) \right).
\end{align*}
Recalling the bound
\begin{equation*}
 \mathcal{W}_2^{2}( \mu_t^{\boldsymbol{X}^N}, \mu_{\underline{t}}^{\boldsymbol{X}^N}) \leq \frac{1}{N} \sum_{j=1}^{N} |X_{t}^{j,N} - X_{\underline{t}}^{j,N}|^2,
\end{equation*}
along with the standard one-step estimate $\mathbb{E}[\sup_{s \in [\underline{t},t]}|X_{s}^{i,N}-X_{\underline{t}}^{i,N}|^2] \leq C\delta$ and It\^{o}'s isometry, allows us to show $\Pi_1 \leq C \delta^2$, for some constant $C>0$. 
Using again (H.\ref{Assum:Ax}(\ref{Assum:Ax1})), a conditional expectation argument and estimates for Brownian increments, we derive 
\begin{align*}
\Pi_2 \leq C \delta \mathbb{E}[| X_{\underline{t}}^{i,N} - \tilde{X}_{\underline{t}}^{i,N}|^2].
\end{align*}
It remains to analyse 
$\Pi_3$,
where, for ease of notation, we consider $k_n=1$ and $T=1$ so that $\delta T = \delta$. Hence, we observe for $s\in [\delta,2\delta)$
\begin{align}\label{eq:MS}
   \mathbb{E}[| \tilde{X}_{s}^{i,N} -\tilde{X}_{\delta}^{i,N}  |^2]  
  &= \mathbb{E}[| \tilde{X}_{\underline{s}}^{i,N} + b(\underline{s},\tilde{X}^{i,N}_{\underline{s}}, \mu^{\tilde{\boldsymbol{X}}^N}_{\delta})(s-\underline{s}) +\sigma(\underline{s},\tilde{X}^{i,N}_{\underline{s}}, \mu^{\tilde{\boldsymbol{X}}^N}_{\delta})(W_s^i-W_{\underline{s}}^i) -\tilde{X}_{\delta}^{i,N}  |^2]  \notag \\
   & = \mathbb{E} \left[ \left | \int_{\delta}^s  b(\underline{u},\tilde{\bar{X}}^{i,N}_{u},  \mu^{\tilde{\bar{\boldsymbol{X}}}^N}_{u}) \, \mathrm{d}u + \int_{\delta}^s \sigma(\underline{u},\tilde{\bar{X}}^{i,N}_{u},  \mu^{\tilde{\bar{\boldsymbol{X}}}^N}_{u}) \, \mathrm{d}W_u^{i} \ \right |^2 \right]  \notag \\
   & \leq 2\mathbb{E} \left[ \left | \int_{\delta}^s  b(\underline{u},\tilde{\bar{X}}^{i,N}_{u},  \mu^{\tilde{\bar{\boldsymbol{X}}}^N}_{u}) \, \mathrm{d}u \right |^2 \right] + 2 \mathbb{E} \left[  \int_{\delta}^s  \| \sigma(\underline{u},\tilde{\bar{X}}^{i,N}_{u},  \mu^{\tilde{\bar{\boldsymbol{X}}}^N}_{u}) \|^2 \, \mathrm{d}u   \right]  \notag \\
   & \leq 2\mathbb{E} \left[ \delta \int_{\delta}^{s} | b(\underline{u},\tilde{\bar{X}}^{i,N}_{u},  \mu^{\tilde{\bar{\boldsymbol{X}}}^N}_{u})  |^2 \, \mathrm{d}u \right] +  2 \mathbb{E} \left[  \int_{\delta}^s  \| \sigma(\underline{u},\tilde{\bar{X}}^{i,N}_{u},  \mu^{\tilde{\bar{\boldsymbol{X}}}^N}_{u}) \|^2 \, \mathrm{d}u   \right]  \notag \\
   & \leq C \delta,
\end{align}
for some $C>0$ and where the last inequality is due to the polynomial growth bounds on $b$ and $\sigma$ and the moment stability of the process $\tilde{X}^{i,N}$. Hence, (H.\ref{Assum:Ax}(\ref{Assum:Ax1})) and estimates for Brownian increments allow us to deduce that $\Pi_3 \leq C \delta^2$.  

Returning to \eqref{last_term},
we write the last term there as
\begin{align*}
& \hspace{-1 cm} \mathbb{E} \left \langle X_{\underline{t}}^{i,N} - \tilde{X}_{\underline{t}}^{i,N},  \int_{\underline{t}}^{t} b(X_{s}^{i,N}, \mu_s^{\boldsymbol{X}^N}) \, \mathrm{d}s - b(\tilde{X}_{\underline{t}}^{i,N}, \mu_{k_n\delta T}^{\tilde{\boldsymbol{X}}^N})(t-\underline{t}) \right \rangle \ = \ \sum_{i=1}^{3} \Lambda_i, \qquad \text{where} \\
\Lambda_1 & = \mathbb{E} \left \langle X_{\underline{t}}^{i,N} - \tilde{X}_{\underline{t}}^{i,N} ,  \int_{\underline{t}}^{t} b(X_{s}^{i,N}, \mu_s^{\boldsymbol{X}^N}) \, \mathrm{d}s - b(X_{\underline{t}}^{i,N},  \mu_{\underline{t}}^{\boldsymbol{X}^N})(t-\underline{t})  \right \rangle \\
\Lambda_2 & = \mathbb{E} \left \langle X_{\underline{t}}^{i,N} - \tilde{X}_{\underline{t}}^{i,N} ,  b(X_{\underline{t}}^{i,N},  \mu_{\underline{t}}^{\boldsymbol{X}^N})(t-\underline{t}) -  b(\tilde{X}_{\underline{t}}^{i,N}, \mu_{\underline{t}}^{\tilde{\boldsymbol{X}}^N})(t-\underline{t})  \right \rangle \\
\Lambda_3 & = \mathbb{E} \left \langle X_{\underline{t}}^{i,N} - \tilde{X}_{\underline{t}}^{i,N} , b(\tilde{X}_{\underline{t}}^{i,N}, \mu_{\underline{t}}^{\tilde{\boldsymbol{X}}^N})(t-\underline{t})  - b(\tilde{X}_{\underline{t}}^{i,N}, \mu_{k_n\delta T}^{\tilde{\boldsymbol{X}}^N})(t-\underline{t})   \right \rangle.
\end{align*}
In the sequel, we will analyse these terms one-by-one. First, using Young's inequality and H\"{o}lder's inequality plus taking (H.\ref{Assum:Ax}(\ref{Assum:Ax3})) and (H.\ref{Assum:Ax}(\ref{Assum:Ax4})) into account, we get
\begin{align*}
\Lambda_1 & \leq \mathbb{E} \left[ |X_{\underline{t}}^{i,N} - \tilde{X}_{\underline{t}}^{i,N}| \delta^{1/2}  \delta^{-1/2} \left| \int_{\underline{t}}^{t} b(X_{s}^{i,N}, \mu_s^{\boldsymbol{X}^N}) \, \mathrm{d}s - b(X_{\underline{t}}^{i,N},  \mu_{\underline{t}}^{\boldsymbol{X}^N})(t-\underline{t})  \right| \right] \\
& \leq C\delta  \mathbb{E} \left[|X_{\underline{t}}^{i,N} - \tilde{X}_{\underline{t}}^{i,N}|^2 \right] + C\delta^{-1} \mathbb{E}\left[ \left| \int_{\underline{t}}^{t} b(X_{s}^{i,N}, \mu_s^{\boldsymbol{X}^N}) \, \mathrm{d}s - b(X_{\underline{t}}^{i,N},  \mu_{\underline{t}}^{\boldsymbol{X}^N})(t-\underline{t}) \right|^2  \right] \\
& \leq  C\delta  \mathbb{E} \left[|X_{\underline{t}}^{i,N} - \tilde{X}_{\underline{t}}^{i,N}|^2 \right] + C\delta^{-1} \mathbb{E}\left[(t-\underline{t})\int_{\underline{t}}^{t} |b(X_{s}^{i,N}, \mu_s^{\boldsymbol{X}^N})-b(X_{\underline{t}}^{i,N},  \mu_{\underline{t}}^{\boldsymbol{X}^N})|^2 \, \mathrm{d}s   \right] \\
& \leq  C\delta  \mathbb{E} \left[|X_{\underline{t}}^{i,N} - \tilde{X}_{\underline{t}}^{i,N}|^2 \right] + C \mathbb{E}\left[\int_{\underline{t}}^{t} \left( (1 + |X_{s}^{i,N}|^q +|X_{\underline{t}}^{i,N}|^q)^2 |X_{s}^{i,N}-X_{\underline{t}}^{i,N}|^2 + \mathcal{W}_2^{2}(\mu_s^{\boldsymbol{X}^N},\mu_{\underline{t}}^{\boldsymbol{X}^N}) \right) \, \mathrm{d}s   \right] \\
& \leq  C\delta  \mathbb{E} \left[|X_{\underline{t}}^{i,N} - \tilde{X}_{\underline{t}}^{i,N}|^2 \right] + C\delta^2,
\end{align*}
where we also used the one-step estimate $\mathbb{E}[\sup_{s \in [\underline{t},t]}|X_{s}^{i,N}-X_{\underline{t}}^{i,N}|^p] \leq C\delta^{p/2}$ and the stability of $X^{i,N}$. By similar arguments, in combination with (H.\ref{Assum:Ax}(\ref{Assum:Ax2})), we derive
\begin{equation*}
 \Lambda_2 \leq C\delta  \mathbb{E} \left[|X_{\underline{t}}^{i,N} - \tilde{X}_{\underline{t}}^{i,N}|^2 \right].
\end{equation*} 
In a similar manner, recalling estimate (\ref{eq:MS}), we obtain for the last term 
\begin{align*}
\Lambda_3 \leq C\delta  \mathbb{E} \left[|X_{\underline{t}}^{i,N} - \tilde{X}_{\underline{t}}^{i,N}|^2 \right] + C\delta^2.
\end{align*}  
Collecting the above estimates, we conclude
\begin{align*}
\mathbb{E}[|X_t^{i,N} - \tilde{X}_t^{i,N}|^2] \leq  C(1+\delta)\mathbb{E}[| X_{\underline{t}}^{i,N} - \tilde{X}_{\underline{t}}^{i,N}|^2] + C\delta^2.
\end{align*}
An inductive argument over all time-steps in combination with Proposition \ref{PropTS} gives the claim.
\end{proof}

\subsection{Moment stability for super-linear diffusion coefficient}\label{sec:supDiff}
\begin{prop}\label{prop1N}
Let $p>0$ and $X_0 \in L^p_0(\mathbb{R}^d)$. Let Assumptions (H.\ref{Assum:B}) and (H.\ref{Assum:Ax}(\ref{Assum:Ax3}))--(H.\ref{Assum:Ax}(\ref{Assum:Ax6})) hold, and suppose the time-step function satisfies (H.\ref{Assum:BS2}). Then, $T$ is almost surely attainable, i.e. 
$\mathbb{P}(\exists \ M(\omega) < \infty, \text{ s.t. } t_{M(\omega)} \geq T)=1$. Moreover, there exists a constant $C>0$ such that  
\begin{equation*}
    \max_{i \in \lbrace 1, \ldots, N \rbrace} \sup_{t \in [0,T]} \mathbb{E}\left[ |  \hat{X}_{t}^{i,N} |^p \right] 
     \leq C.
\end{equation*}
\end{prop}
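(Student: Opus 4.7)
The plan is to adapt the truncation argument from Proposition \ref{prop1} to the super-linearly growing diffusion setting. I would introduce the $K$-truncated process $\hat X^{K,i,N}$ exactly as in (\ref{eq:eq1}), with the projection $P_K$ guaranteeing $|\hat X_{t_n}^{K,i,N}| \leq K$; since $\inf_{|x|\leq K} h(x) > 0$ by continuity of $h$, the final time $T$ is reached in finitely many steps a.s.\ for each fixed $K$. I then derive a moment bound for the truncated process that is uniform in $K$, pass to the limit via Markov's inequality (analogously to the corresponding passage of Proposition \ref{prop1}) to obtain attainability of $T$ for the non-truncated scheme, and conclude with monotone convergence to recover the stated moment bound for $\hat X^{i,N}$.

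The main new difficulty, compared with Proposition \ref{prop1}, is that $\|\sigma\|$ is no longer linearly bounded but grows like $|x|^{(q+2)/2}$ (Remark \ref{remark:SupeDiff}), so the quadratic It\^o term $\tfrac{p(p-1)}{2}|\hat X|^{p-2}\|\sigma\|^2$ cannot be absorbed into $|\hat X|^p$ alone. The resolution is to invoke the bundled consequence of (\ref{as:coer}) and (H.\ref{Assum:A}(\ref{Assum:A6})),
\begin{equation*}
  \langle x, b(t,x,\mu)\rangle + \tfrac{p-1}{2}\|\sigma(t,x,\mu)\|^2 \leq L_3 + L_4|x|^2,
\end{equation*}
which pairs the super-linear $\|\sigma\|^2$ against the super-linear $\langle x, b\rangle$. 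I would apply It\^o's formula to $|\hat X_t^{K,i,N}|^p$ using the SDE (\ref{AdaptiveMcKean}), split $\hat X_t = \bar X_t + \Delta_t$ with $\Delta_t = b_t(t-\underline t) + \sigma_t(W_t - W_{\underline t})$, and apply the monotone growth bound to the $\bar X_t$ contribution of the drift. The remaining cross-terms decompose into a drift--drift piece $|b_t|^2(t-\underline t)$, controlled by $h(\bar X_t)|b_t|^2 \leq C(1+|\bar X_t|^2)$ as a consequence of (H.\ref{Assum:BS2}) combined with (H.\ref{Assum:A}(\ref{Assum:A4})), and a drift--diffusion piece $\langle b_t, \sigma_t(W_t-W_{\underline t})\rangle$ whose second moment is handled via the crucial estimate $|b|\|\sigma\|h^{1/2} \leq C(1+|x|^2)$ from Remark \ref{remark:SupeDiff}.

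Combining these ingredients with Burkholder--Davis--Gundy bounds on the martingale remainder will yield an estimate of the form
\begin{equation*}
  \mathbb{E}\bigl[|\hat X_\tau^{K,i,N}|^p\, \mathbb{I}_{|X_0^i| \leq c}\bigr] \leq C_c\Bigl(1 + \int_0^T \mathbb{E}\bigl[|\hat X_s^{K,i,N}|^p\, \mathbb{I}_{|X_0^i|\leq c}\bigr]\,\mathrm{d}s\Bigr)
\end{equation*}
for every bounded stopping time $\tau \leq T$, with $C_c$ independent of $K$. Gronwall then provides the uniform-in-$K$ pointwise moment bound, which I would promote to a bound on the supremum via Proposition \ref{prop:Gy} applied with $f_t = |\hat X_t^{K,i,N}|^p$, $g_t$ the dominating process built from the right-hand side (initial data plus integrated drift plus martingale part), and $\gamma = m/p \in (0,1)$. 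The restriction $m < p$ enters precisely at this step, because Gy\"ongy's lemma requires $\gamma < 1$.

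The hard part will be closing this Gronwall-type estimate in the face of super-linearity of both coefficients: neither drift nor diffusion admits a stand-alone polynomial bound amenable to Gronwall, and it is only through the joint decay of $h$ (like $|x|^{-3q}$) together with the monotonicity bundling of $b$ and $\sigma$ that both the It\^o drift and the stochastic cross-terms can be tamed. The delicate point is to verify that the one-step increments $\Delta_t$ do not introduce any remainder incompatible with Gronwall; the cross-bound $|b|\|\sigma\|h^{1/2} \leq C(1+|x|^2)$ from Remark \ref{remark:SupeDiff} is what makes this work, and is effectively the raison d'\^etre of the strengthened time-step condition (H.\ref{Assum:BS2}).
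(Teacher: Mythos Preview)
Your plan is essentially correct and matches the paper's argument in all the substantive steps: It\^o's formula applied to $(1+|\hat X|^2)^{p/2}$, the monotonicity bundle $\langle x,b\rangle + \tfrac{p-1}{2}\|\sigma\|^2 \leq L_3 + L_4|x|^2$ to absorb the super-linear diffusion into the drift, the one-step estimate $\mathbb{E}[|\hat X_s - \hat X_{\underline s}|^{p/2}|b|^{p/2}] \leq C\,\mathbb{E}[(1+|\hat X_{\underline s}|^2)^{p/2}]$ driven by the cross-bound $|b|\|\sigma\|h^{1/2} \leq C(1+|x|^2)$ from Remark \ref{remark:SupeDiff}, then Gronwall followed by Proposition \ref{prop:Gy} with exponent $\gamma = m/p$ to upgrade to the supremum. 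The one difference is localization: the paper does \emph{not} reuse the $P_K$-truncation of Proposition \ref{prop1} but instead stops the untruncated process at $\tau_R = \min_i \inf\{t : |\hat X_t^{i,N}| > R\}$, applies It\^o up to $t\wedge\tau_R$ (so the stochastic integral is a genuine martingale and vanishes upon taking expectation---no BDG is used here), obtains the moment bound uniformly in $R$, and then shows $\mathbb{P}(\tau_R \leq T) \leq CN/R^2 \to 0$ to get attainability. Your $P_K$-route would also work (the projection only decreases $|\cdot|$ at grid points, and on each subinterval the interpolant has bounded coefficients so the stochastic integral is still a martingale), but the stopping-time localization is slightly cleaner because the continuous interpolant remains an honest It\^o process throughout, with no discrete projection jumps to track. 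Your mention of BDG is unnecessary in either variant: once localized, the martingale term has zero expectation at every bounded stopping time, which is exactly the hypothesis of Proposition \ref{prop:Gy}.
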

\begin{proof}
We get for some $C_p>0$, using equation (\ref{AdaptiveMcKean})
\begin{align*}
 |\hat{X}_{t}^{i,N} - \hat{X}_{\underline{t}}^{i,N}  |^{p} &\leq  C_p  | b(\underline{t},\bar{X}_{t}^{i,N},\mu_{t}^{\bar{\boldsymbol{X}}^N})  |^{p} (t-\underline{t})^p  +  C_p  |\sigma(\underline{t},\bar{X}_{t}^{i,N}, \mu_{t}^{\bar{\boldsymbol{X}}^N}) (W_{t}^i -W_{\underline{t}}^i)  |^{p} \nonumber,
\end{align*}
which on multiplying with the drift and taking expectations on both sides yields 
\begin{align}\label{eq:onestep}
&\mathbb{E} \left[  |\hat{X}_{t}^{i,N} - \hat{X}_{\underline{t}}^{i,N}  |^{p} | b(\underline{t},\bar{X}_{t}^{i,N}, \mu_{t}^{\bar{\boldsymbol{X}}^N})  |^{p}  \right] \notag \\
& \leq C_p \mathbb{E} \left[  | b(\underline{t},\bar{X}_{t}^{i,N}, \mu_{t}^{\bar{\boldsymbol{X}}^N})  |^{2p} (t-\underline{t})^p \right] + C_p \mathbb{E} \left[  |\sigma(\underline{t},\bar{X}_{t}^{i,N}, \mu_{t}^{\bar{\boldsymbol{X}}^N}) (W_{t}^i -W_{\underline{t}}^i) |^{p} | b(\underline{t},\bar{X}_{t}^{i,N}, \mu_{t}^{\bar{\boldsymbol{X}}^N})  |^{p}  \right] \notag \\
& \leq C_p \mathbb{E} \left[  (1+|\hat{X}_{\underline{t}}^{i,N}|^2)^{p/2} \right] + C_p \mathbb{E} \left[ \mathbb{E} \left[ |\sigma(\underline{t},\bar{X}_{t}^{i,N}, \mu_{t}^{\bar{\boldsymbol{X}}^N}) (W_{t}^i -W_{\underline{t}}^i)    |^{p} | b(\underline{t},\bar{X}_{t}^{i,N}, \mu_{t}^{\bar{\boldsymbol{X}}^N})  |^{p}  \big|  \mathcal{F}_{\underline{t}}  \right] \right] \notag \\
& \leq C_p \mathbb{E} \left[  (1+|\hat{X}_{\underline{t}}^{i,N}|^2)^{p/2} \right],
\end{align}
where we used $\mathbb{E} \left[ |W_{t}^i -W_{\underline{t}}^i |^{p} \big| \mathcal{F}_{\underline{t}} \right] \leq C_p (t-\underline{t})^{p/2}$, for some $C_p > 0$, along with (H.\ref{Assum:BS2}) and Remark \ref{remark:SupeDiff}. This one-step error estimate will be essential at a later stage of the proof.

Now, we consider the stopping time $\tau_{R} := \min_{i \in \lbrace 1, \ldots, N\rbrace} \tau^{i}_{R}$, where $\tau^{i}_{R}: = \inf \lbrace t \geq 0 \ : \  |\hat{X}_{t}^{i,N} | \geq R \rbrace$. Due to (H.\ref{Assum:BS2}), $T \land \tau_{R}$ is attainable. Therefore, It\^{o}'s formula for the stopped process $\hat{X}_{t \land \tau_{R}}^{i,N}$ gives
\begin{align}
 \big(1 + |\hat{X}_{t \land \tau_{R}}^{i,N} |^{2} \big)^{p/2} & =  \left( 1 + |\hat{X}_{0}^{i,N} |^{2} \right)^{p/2}  
+ p   \int_{0}^{t \land \tau_{R}} \left(1+ |\hat{X}_{s}^{i,N} |^2 \right)^{p/2 -1} \left \langle \hat{X}_{s}^{i,N}, b(\underline{s},\bar{X}_{s}^{i,N}, \mu_{s}^{\bar{\boldsymbol{X}}^N}) \right \rangle \, \mathrm{d}s \nonumber 
\\
&  \quad + p   \int_{0}^{t \land \tau_{R}} \left(1+ |\hat{X}_{s}^{i,N} |^2 \right)^{p/2 -1}  \left \langle \hat{X}_{s}^{i,N}, \sigma(\underline{s},\bar{X}_{s}^{i,N}, \mu_{s}^{\bar{\boldsymbol{X}}^N}) \, \mathrm{d}W_s^i \right \rangle \nonumber 
\\
&  \quad +  \frac{p(p-2)}{2}  \int_{0}^{t \land \tau_{R}} \left(1+ |\hat{X}_{s}^{i,N} |^2 \right)^{p/2 -2} \big |  \sigma^{\top}(\underline{s},\bar{X}_{s}^{i,N}, \mu_{s}^{\bar{\boldsymbol{X}}^N}) \hat{X}_{s}^{i,N}  \big |^2 \, \mathrm{d}s \nonumber 
\\ 
& \quad +  \frac{p}{2}  \int_{0}^{t \land \tau_{R}} \left(1+ |\hat{X}_{s}^{i,N} |^2 \right)^{p/2 -1} \big \| \sigma(\underline{s},\bar{X}_{s}^{i,N}, \mu_{s}^{\bar{\boldsymbol{X}}^N})  \big \|^2 \, \mathrm{d}s, \nonumber 
\end{align}
almost surely for any $t\in[0,T]$. Thus, on taking expectation and using the Cauchy-Schwarz inequality 
\begin{align}
& \mathbb{E} \left[ \big(1 + | \hat{X}_{t \land \tau_{R}}^{i,N}|^{2} \big)^{p/2} \right] 
\leq \mathbb{E} \left[  \big( 1 + | \hat{X}_{0}^{i,N} |^{2} \big)^{p/2} \right] \notag \\
& \qquad\quad + p   \mathbb{E} \left[  \int_{0}^{t \land \tau_{R}}  \left(1+ |\hat{X}_{s}^{i,N} |^2 \right)^{p/2 -1} \left( \left \langle \bar{X}_{s}^{i,N},  b(\underline{s},\bar{X}_{s}^{i,N}, \mu_{s}^{\bar{\boldsymbol{X}}^N}) \right \rangle  +  \frac{(p-1)}{2}   \big \| \sigma(\underline{s},\bar{X}_{s}^{i,N}, \mu_{s}^{\bar{\boldsymbol{X}}^N}) \big \|^2  \right) \, \mathrm{d}s  \right] \notag
\\  
& \qquad\quad + p  \mathbb{E} \left[ \int_{0}^{t \land \tau_{R}}  \left(1+ | \hat{X}_{s}^{i,N} |^2 \right)^{p/2 -1} \left \langle  \hat{X}_{s}^{i,N} - \hat{X}_{\underline{s}}^{i,N}, b(\underline{s},\bar{X}_{s}^{i,N}, \mu_{s}^{\bar{\boldsymbol{X}}^N}) \right \rangle  \, \mathrm{d}s \right]. \notag
\end{align}
Hence, by (H.\ref{Assum:B}(\ref{Assum:B2})) we further obtain, for some constant $C_{p,L} >0$ (depending on the constants in (H.\ref{Assum:B}(\ref{Assum:B2})))    
\begin{align}
\mathbb{E} \left[\big(1 + |X_{t \land \tau_{R}}^{i,N, n} |^{2} \big)^{p_0/2} \right]  & \leq \mathbb{E} \left[ \big( 1 + |\hat{X}_{0}^{i,N}|^{2} \big)^{p/2} \right] + C_{p,L} \int_{0}^{t} \sup_{r \in [0,s]} \mathbb{E} \left[ \big(1+ |\hat{X}_{r \land \tau_{R}}^{i,N}|^2 \big)^{p/2} \right]  \, \mathrm{d}s + pF, \label{eq:F1+F2}
\end{align}
for any $t\in[0,T]$, where $F$ is is given by  
\begin{align*}
F & := \mathbb{E} \left[ \int_{0}^{t \land \tau_{R}}  \left(1+ |\hat{X}_{s}^{i,N}|^2 \right)^{p/2 -1}\left \langle \hat{X}_{s}^{i,N} - \hat{X}_{\underline{s}}^{i,N}, b(\underline{s},\bar{X}_{s}^{i,N}, \mu_{s}^{\bar{\boldsymbol{X}}^N}) \right \rangle   \, \mathrm{d}s \right]. \nonumber
\end{align*}
This gives, using H\"{o}lder's inequality along with equation (\ref{eq:onestep}) and (H.\ref{Assum:BS2}), 
\begin{align}\label{eq:F1:ms}
F  & \leq  C_p \int_{0}^{t} \sup_{r \in [0,s]} \mathbb{E} \left[ \big(1+ |\hat{X}_{r \land \tau_{R}}^{i,N}|^2 \big)^{p/2} \right]   + C_ p\mathbb{E} \left[ \int_{0}^{t \land \tau_{R}}  | \hat{X}_{s}^{i,N} - \hat{X}_{\underline{s}}^{i,N} |^{p/2} | b(\underline{s},\bar{X}_{s}^{i,N}, \mu_{s}^{\bar{\boldsymbol{X}}^N}) |^{p/2} \, \mathrm{d}s \right] \notag \\
& \leq  C_p \int_{0}^{t} \sup_{r \in [0,s]} \mathbb{E} \left[ \big(1+ |\hat{X}_{r \land \tau_{R}}^{i,N}|^2 \big)^{p/2} \right].
\end{align}
for some generic constant $C_p>0$.
Combining equations (\ref{eq:F1+F2}) and (\ref{eq:F1:ms}) and using Gronwall's inequality yields the claim, up to time $T \land \tau_{R}$. 
Now, note that 
\begin{equation*}
\mathbb{P}(\tau_{R} \leq T) \leq \frac{1}{R^2} \mathbb{E}\left[\max_{i \in \lbrace 1, \ldots, N \rbrace} |\hat{X}_{T \land \tau_{R}}^{i,N}|^2 \right] \leq \frac{CN}{R^2}, 
\end{equation*}
and hence,
\begin{align*}
\mathbb{P} \left(  \max_{i \in \lbrace 1, \ldots, N \rbrace}  \sup_{t \in [0,T]} |\hat{X}_{t}^{i,N}| < R \right) = 1 - \mathbb{P}(\tau_{R} \leq T) \to 1, \text{ as } R \to \infty.  
\end{align*}
Therefore, almost surely, $ \max_{i \in \lbrace 1, \ldots, N \rbrace} \sup_{t \in [0,T]} |\hat{X}_{t}^{i,N}| < \infty$ and $T$ is attainable. Using this and the stability up to time $T \land \tau_{R}$, the claim follows from Fatou's lemma.
\end{proof}

\subsection{Moment stability of Milstein scheme} \label{proofMil}

\begin{prop}\label{prop_milstab}
Let $p>0$ and $\xi \in L^p_0(\mathbb{R}^d)$. Let Assumption (H.\ref{Assum:AxM}) hold, and suppose the time-step function satisfies (H.\ref{Assum:AxS}(\ref{Assum:AxS2})). Then, $T$ is almost surely attainable, i.e. 
$\mathbb{P}(\exists \ M(\omega) < \infty, \text{ s.t. } t_{M(\omega)} \geq T)=1$.
Moreover, there exists a constant $C>0$ such that for $\hat{Y}$ defined in \eqref{eq:MilAdap}
\begin{equation*}
    \max_{i \in \lbrace 1, \ldots, N \rbrace} \mathbb{E}\left[\sup_{t \in [0,T]} |  \hat{Y}_{t}^{i,N} |^p \right]  \leq C.
\end{equation*}
\end{prop}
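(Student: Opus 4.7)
The plan is to adapt the $K$-truncation strategy of Proposition \ref{prop1} to the Milstein update \eqref{eq:MilAdap}. Define $\hat{X}^{K,i,N}_{t_n}$ by projecting the right-hand side of \eqref{eq:MilAdap} onto the ball of radius $K$ via the map $P_K$, exactly as in (\ref{eq:eq1}). Because $h$ is bounded below on $\{|x|\le K\}$, $T$ is attainable for the truncated scheme, and the goal is then to derive moment bounds uniform in $K$: the stability of $\hat{X}^{i,N}$ and the attainability of $T$ for the un-truncated scheme then follow by monotone convergence and the Markov-inequality argument at the end of Proposition \ref{prop1}.

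The main new calculation is the one-step pathwise bound on $|\hat{X}^{K,i,N}_{t_{n+1}}|^2$. Writing the update as $\hat{X}^{K,i,N}_{t_n}+B_n h_n^{\min}+S_n\Delta W^{i}_{t_n}+M_n$, with $B_n=\tfrac{1}{N}\sum_j b(\hat{X}^{K,i,N}_{t_n},\hat{X}^{K,j,N}_{t_n})$, $S_n=\sigma(\hat{X}^{K,i,N}_{t_n})$ and $M_n$ the Milstein correction in \eqref{eq:MilAdap}, expansion of the square produces three cross terms involving $B_n h_n^{\min}$, namely $2h_n^{\min}\langle \hat{X}^{K,i,N}_{t_n},B_n\rangle$, $2\langle B_n h_n^{\min},S_n\Delta W^{i}_{t_n}\rangle$ and $2\langle B_n h_n^{\min},M_n\rangle$, together with the deterministic contribution $(h_n^{\min})^2|B_n|^2$. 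Applying Young's inequality $2\langle u,v\rangle\le|u|^2+|v|^2$ to the latter two cross terms absorbs them into three copies of $(h_n^{\min})^2|B_n|^2$; combining with $2h_n^{\min}\langle \hat{X}^{K,i,N}_{t_n},B_n\rangle$ one obtains $2h_n^{\min}\bigl(\langle \hat{X}^{K,i,N}_{t_n},B_n\rangle+\tfrac{3}{2}h_n^{\min}|B_n|^2\bigr)$. This is precisely the quantity controlled by (H.\ref{Assum:AS}(\ref{Assum:AS2})) with the strengthened factor $3/2$ flagged in the statement, and it is bounded above by $2h_n^{\min}(L_c|\hat{X}^{K,i,N}_{t_n}|^2+L_d)$.

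The remaining contributions to the one-step bound are $2|S_n\Delta W^{i}_{t_n}|^2+2|M_n|^2$ and the three martingale-type increments $2\langle \hat{X}^{K,i,N}_{t_n},S_n\Delta W^{i}_{t_n}\rangle$, $2\langle \hat{X}^{K,i,N}_{t_n},M_n\rangle$ and $2\langle S_n\Delta W^{i}_{t_n},M_n\rangle$. For $|M_n|^2$, I would invoke (H.\ref{Assum:AM}(\ref{Assum:AM2})), which yields $|L^{j_1}\sigma_{j_2}(x)|\le C(1+|x|)$, giving the pathwise estimate
\begin{equation*}
|M_n|^2\le C(1+|\hat{X}^{K,i,N}_{t_n}|^2)\sum_{j_1,j_2}(\Delta W^{i,j_1}_{t_n}\Delta W^{i,j_2}_{t_n}-\delta_{j_1 j_2}h_n^{\min})^2,
\end{equation*}
whose $p$-th conditional moment is $O((h_n^{\min})^p(1+|\hat{X}^{K,i,N}_{t_n}|^p))$. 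The three mean-zero cross terms are martingale differences (the last because of the vanishing third Gaussian moment) and will be controlled by the Burkholder--Davis--Gundy inequality after summation, exactly as in the treatment of $\Pi_2$ in Proposition \ref{prop1}.

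Having established the one-step inequality, the remainder of the argument runs parallel to Proposition \ref{prop1}: sum from $0$ to $n_t-1$, add the contribution from $\underline{t}$ to $t$ (which for Milstein contains an extra term built from $(W^i_t-W^i_{\underline{t}})^{\otimes 2}-(t-\underline{t})I$ and is handled analogously), apply Jensen with exponent $p/2$ (treating $p\ge 4$ first and deducing $0<p<4$ via H\"older), take supremum and expectation, and estimate each arising term using BDG, the moment assumption on $X_0$, and the polynomial growth of $b$, $\sigma$ and $L^{j_1}\sigma_{j_2}$ inherited from (H.\ref{Assum:AM}). A discrete Gronwall argument gives the $K$-independent bound, and the conclusion then follows via monotone convergence as in Proposition \ref{prop1}. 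I expect the chief obstacle to be the careful Young's-inequality bookkeeping that produces the factor $3/2$: once this pathwise identity is isolated and combined with the linear growth of $L^{j_1}\sigma_{j_2}$, the rest of the analysis is a direct transcription of Proposition \ref{prop1}.
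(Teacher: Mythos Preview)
Your proposal is correct and follows essentially the same route as the paper: $K$-truncation, a one-step pathwise bound on $|\hat{X}^{K,i,N}_{t_{n+1}}|^2$ where Young's inequality produces the factor $\tfrac{3}{2}$ in front of $h_n^{\min}|B_n|^2$ (so that (H.\ref{Assum:AS}(\ref{Assum:AS2})) applies in its Milstein form), then the machinery of Proposition~\ref{prop1} combined with the linear-growth estimate for $L^{j_1}\sigma_{j_2}$ to control $\sum_n |M_n|^2$ (the paper cites \cite{GW} for this bound). The only cosmetic difference is that the paper groups $2\langle \hat{X}^{K,i,N}_{t_n},S_n\Delta W^{i}_{t_n}\rangle+2\langle B_n h_n^{\min},S_n\Delta W^{i}_{t_n}\rangle$ into the single martingale term $2\langle\phi,\sigma\Delta W\rangle$ and instead absorbs $2\langle S_n\Delta W^{i}_{t_n},M_n\rangle$ via Young, whereas you absorb $2\langle B_n h_n^{\min},S_n\Delta W^{i}_{t_n}\rangle$ and keep $2\langle S_n\Delta W^{i}_{t_n},M_n\rangle$ as a mean-zero increment; both choices lead to the same $\tfrac{3}{2}$ and the same Gronwall closure.
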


\begin{proof}

We outline the difference to the proof of the moment stability of the adaptive Euler-Maruyama scheme. The $K$-scheme for the Milstein discretisation reads as follows:
\begin{align*}
    \hat{Y}_{t_{n+1}}^{K,i,N} &= P_K \Big(\hat{Y}_{t_{n}}^{K,i,N} + \frac{1}{N} \sum_{j=1}^N b(\hat{Y}_{t_n}^{K,i,N}, \hat{Y}_{t_n}^{K,j,N})h_n^{\min} \\
    & \qquad + \sigma(\hat{Y}_{t_{n}}^{K,i,N}) \Delta W_{t_n}^{i} + \frac{1}{2} \sum_{j_1,j_2=1}^k L^{j_1} \sigma_{j_2}(\hat{Y}_{t_{n}}^{K,i,N}) (\Delta W_{t_n}^{i,j_1} \Delta W_{t_n}^{i,j_2}  -\delta_{j_1,j_2} h_n^{\min}) \Big).
\end{align*}
Hence, using
\begin{align*}
M_{t_n}^{K,i}:= \frac{1}{2} \sum_{j_1,j_2=1}^k L^{j_1} \sigma_{j_2}(\hat{Y}_{t_{n}}^{K,i,N}) (\Delta W_{t_n}^{i,j_1} \Delta W_{t_n}^{i,j_2}  -\delta_{j_1,j_2} h_n^{\min}),
\end{align*}
we get
\begin{align*}
    | \hat{Y}^{K,i,N}_{t_{n+1}} |^2 &\leq | \hat{Y}^{K,i,N}_{t_{n}}  |^2 + 2 h_n^{\min} \left( \left \langle \hat{Y}^{K,i,N}_{t_{n}} , \frac{1}{N} \sum_{j=1}^N b(\hat{Y}^{K,i,N}_{t_{n}},\hat{Y}^{j,N}_{t_{n}}) \right \rangle + \frac{1}{2}h_n^{\min} \left | \frac{1}{N} \sum_{j=1}^N b(\hat{Y}^{K,i,N}_{t_{n}},\hat{Y}^{j,N}_{t_{n}}) \right |^2  \right) \\
    & \quad + 2 \left \langle \phi(\hat{Y}_{t_n}^{K,i,N}), \sigma(\hat{Y}^{K,i,N}_{t_{n}}) \Delta W_{t_n}^i \right \rangle + | \sigma(\hat{Y}^{K,i,N}_{t_{n}}) \Delta W_{t_n}^i |^2 + | M_{t_n}^{K,i} |^2 \\
    & \quad + 2 \left \langle \hat{Y}^{K,i,N}_{t_{n}}, M_{t_n}^{K,i} \right \rangle + 2 \left \langle \sigma(\hat{Y}^{K,i,N}_{t_{n}}) \Delta W_{t_n}^i, M_{t_n}^{K,i}  \right \rangle + 2 h_n^{\min} \left \langle \frac{1}{N} \sum_{j=1}^N b(\hat{Y}^{K,i,N}_{t_{n}},\hat{Y}^{j,N}_{t_{n}}),  M_{t_n}^{K,i} \right \rangle \\
    & \leq  | \hat{Y}^{K,i,N}_{t_{n}} |^2 + 2 h_n^{\min} \left( \left \langle \hat{Y}^{K,i,N}_{t_{n}} , \frac{1}{N} \sum_{j=1}^N b(\hat{Y}^{K,i,N}_{t_{n}},\hat{Y}^{j,N}_{t_{n}}) \right \rangle + \frac{3}{2}h_n^{\min} \left | \frac{1}{N} \sum_{j=1}^N b(\hat{Y}^{K,i,N}_{t_{n}},\hat{Y}^{j,N}_{t_{n}}) \right|^2  \right) \\
    & \quad + 2 \left \langle \phi(\hat{Y}_{t_n}^{K,i,N}), \sigma(\hat{Y}^{K,i,N}_{t_{n}}) \Delta W_{t_n}^i \right \rangle + 2 | \sigma(\hat{Y}^{K,i,N}_{t_{n}}) \Delta W_{t_n}^i |^2 + 3 | M_{t_n}^{K,i} |^2 + 2 \left \langle \hat{Y}^{K,i,N}_{t_{n}}, M_{t_n}^{K,i} \right \rangle.
\end{align*}
Now, using the same techniques as in Proposition \ref{prop1}, the bound (see \cite{GW} for details)
\begin{align*}
\mathbb{E} \left[ \left( \sum_{n=0}^{n_t-1} |  M_{t_n}^{K,i} |^2 \right)^{p/2} \right] \leq C_{p,T,L} \int_{0}^{t} \mathbb{E} \left[ \sup_{u \in [0,s]} | X_u^{K,i,N}|^p \right] \, \mathrm{d}s + C_{p,T,L},
\end{align*}
for some constant $C_{p,T,L} > 0$, and Gronwall's inequality readily allows to prove stability of the truncated particle system. The attainability of $T$ can be argued as in the proof of Proposition \ref{prop1}. Then, a monotone convergence argument yields the claim for the particles $\hat{Y}^{i,N}$. 
\end{proof}

\end{document}